\DeclarePairedDelimiter{\abs}{\lvert}{\rvert}
\DeclarePairedDelimiter{\norma}{\lVert}{\rVert}
\DeclarePairedDelimiter{\abss}{\bigg\lvert}{\bigg\rvert}
\theoremstyle{definition}
\newtheorem{rmk}{Remark} 
\newtheorem{definition}{Definition}
\theoremstyle{plain}
\newtheorem{teorema}{Theorem} 
\newtheorem{lemma}{Lemma}
\newtheorem{proposizione}{Proposition}
\newcommand{\into}{\int_{\Omega}}
\newcommand{\Ldod}{L^2(\Omega)^2}
\title[Second order two-species systems with nonlocal interactions]{Second order two-species systems with nonlocal interactions: existence and large damping limits}
\author{Marco Di Francesco, Simone Fagioli, Valeria Iorio}
\address[Marco Di Francesco]{\newline Dipartimento di Ingegneria e Scienze dell’Informazione e Matematica\newline Universit\`a degli Studi dell’Aquila, Via Vetoio 1, 67100 Coppito, L’Aquila, It.}
\email{marco.difrancesco@univaq.it}
\address[Simone Fagioli]{\newline Dipartimento di Ingegneria e Scienze dell’Informazione e Matematica\newline Universit\`a degli Studi dell’Aquila, Via Vetoio 1, 67100 Coppito, L’Aquila, It.}
\email{simone.fagioli@univaq.it}
\address[Valeria Iorio]{\newline Dipartimento di Ingegneria e Scienze dell’Informazione e Matematica\newline Universit\`a degli Studi dell’Aquila, Via Vetoio 1, 67100 Coppito, L’Aquila, It.}
\email{valeria.iorio1@graduate.univaq.it}
\date{\today}
\keywords{Second order particle system, systems with many species, nonlocal interactions, sticky particle solutions, large damping limit} 
\subjclass[2020]{35A01, 35A02, 35B40, 35B25, 35Q70}
\begin{document}
\maketitle
%\begin{center}
%\Large{\textsc{Second order two-species systems with %nonlocal transport: existence and large damping limit}}
%\end{center}
%\vspace{0.5cm}
\begin{abstract}
We study the mathematical theory of second order systems with two species, arising in the dynamics of interacting particles subject to linear damping, to nonlocal forces and to external ones, and resulting into a nonlocal version of the compressible Euler system with linear damping. Our results are limited to the $1$ space dimensional case but allow for initial data taken in a Wasserstein space of probability measures. We first consider the case of smooth nonlocal interaction potentials, not subject to any symmetry condition, and prove existence and uniqueness. The concept of solutions relies on a \emph{stickiness condition} in case of collisions, in the spirit of previous works in the literature. The result uses concepts from classical Hilbert space theory of gradient flows (cf. Brezis \cite{brezis}) and a trick used in \cite{gangbo}. We then consider a large-time and large-damping scaled version of our system and prove convergence to solutions to the corresponding first order system. Finally, we consider the case of Newtonian potentials - subject to symmetry of the cross-interaction potentials - and external convex potentials. After showing existence in the sticky particles framework in the spirit of \cite{gangbo}, we prove convergence for large times towards Dirac delta solutions for the two densities. All the results share a common technical framework in that solutions are considered in a \emph{Lagrangian} framework, which allows to estimate the behavior of solutions via $L^2$ estimates of the pseudo-inverse variables corresponding to the two densities. In particular, due to this technique, the large-damping result holds under a rather weak condition on the initial data, which does not require \emph{well-prepared}  initial velocities. We complement the results with numerical simulations.
\end{abstract}
%\tableofcontents

\section{Introduction}

Nonlocal aggregation models touch various domains of science and technology such as astro-particle physics, microbiology, population biology, social sciences, artificial intelligence and machine learning. The use of integro-partial differential equations in this context, describing the evolution of a density of individuals $\rho(x,t)$ subject to nonlocal interaction forces, such as
\begin{equation}\label{eq:intro1}
  \frac{\partial \rho}{\partial t}=\mathrm{div}(\rho \nabla W\ast \rho)\,, 
\end{equation}
has become very popular in the literature. In \eqref{eq:intro1}, $x$ is a spatial variable typically ranging in $\mathbb{R}^d$, $t\geq 0$ is time, $W=W(x)$ is a given interaction potential accounting for attractive or repulsive drift among individuals. The modelling approach in \eqref{eq:intro1}
allows to formulate concepts of solutions in a ``probability measure landscape'', which includes the motion of ``pointy particles'' 
\begin{equation}\label{eq:intro1bis}
    \dot{x}_i= -\frac{1}{N}\sum_{j=1}^N \nabla W (x_i-x_j)
\end{equation}
as a special case of the ``continuum theory'', see for example the results in \cite{AGS, cdefs} framed within the Wasserstein gradient flow theory. 

At least on a formal level, a similar dichotomy exists in the case of a \emph{second-order} approach that takes into account inertial effects, namely
\begin{align}
 & \frac{\partial \rho}{\partial t}+\mathrm{div}(\rho v) = 0 \nonumber \\
   & \frac{\partial (\rho v)}{\partial t} + \mathrm{div}(\rho v\otimes v) = -\rho \nabla W\ast \rho\,,\label{eq:intro2}
\end{align}
the ``particle-counterpart'' of which is given by the second-order particle system
\begin{align}
    & \dot{x}_i= v_i \nonumber\\
    & \dot{v}_i= -\frac{1}{N}\sum_{j=1}^N \nabla W (x_i-x_j)\,.\label{eq:intro2bis}
\end{align}
In \eqref{eq:intro2}, $v=v(x,t)$ is the Eulerian velocity of the fluid-like ensemble of individuals. 

System \eqref{eq:intro2} can be considered as a nonlocal version of Euler system for gas-dynamics, in which the classical ``pressure term'' $-\nabla p(\rho)$ is replaced by a nonlocal interaction force $-\rho \nabla W\ast \rho$. A variant of \eqref{eq:intro2} includes a linear ``friction'' - or ``damping'' - term (with $\sigma>0$ a damping parameter) and an external force $-\nabla V$, the full model including nonlinear pressure thus looking like
\begin{align}
 & \frac{\partial \rho}{\partial t}+\mathrm{div}(\rho v) = 0 \nonumber \\
   & \frac{\partial (\rho v)}{\partial t} + \mathrm{div}(\rho v\otimes v) +\nabla p(\rho) = -\rho \nabla W\ast \rho-\rho V -\sigma \rho \nabla v\,.\label{eq:intro3}
\end{align}

The existence theory for systems of the form \eqref{eq:intro3} is a classical challenge of the analysis of PDEs, with strong links with the mathematical theory of systems of conservation laws. Since we are not concerned with pressure terms in this paper, we briefly list a few references on this matter such as \cite{WangYang,bertozzi,FangXu} for the multidimensional case and \cite{DafermosPan,DingChenLuo,HsiaoLuoYang,HuangPan} for the one-dimensional case. We refer to \cite{chen} for a survey on Euler equations.

In the pressure-less case $p=0$ in \eqref{eq:intro3}, the density $\rho$ is not forced to be absolutely continue with respect to Lebesgue measure. Hence, ``particle'' solutions in the spirit of \eqref{eq:intro2bis} are allowed. When two particles collide, a standard way to continue the solution after collision is the so-called \emph{sticky particle condition}, which forces particles to stay attached to each other after collision, with a post-collisional velocity that is uniquely determined by the conservation of momentum. The existence and uniqueness of such ``sticky particle'' solutions has attracted the attention of many researchers for decades. We refer to the recent \cite{bianchini} for a through explanation of the issues related with existence and uniqueness in the multi-dimensional case. A case with $W\neq 0$ that is particularly interesting in the applications it the Euler-Posson model, in which $W$ is the solution operator to Poisson equation, see \cite{Nguyen}. In one space dimension the theory is quite rich of results in the literature, we mention here \cite{BreGre,ns,gangbo}. In particular, \cite{ns,gangbo} first addressed the coupling with general nonlocal forces $W\neq 0$ in the context of Wasserstein gradient flows. 

At least on a formal level, models of the form
\begin{equation}\label{eq:intro4}
  \frac{\partial \rho}{\partial t}=\mathrm{div}(\nabla p(\rho) +\rho \nabla (V + W\ast \rho))
\end{equation}
can be obtained by rescaling time in \eqref{eq:intro3} as $t=\sigma \tau$ and letting $\sigma\rightarrow +\infty$. Such a singular limit regime is called ``overdamped limit'', or ``large friction limit'', and, in case $p\neq 0$, it is well-known in the literature of singular limits for systems of conservation laws as a ``diffusive relaxation limit''. Relevant examples arise in the case of porous-medium like pressures \cite{marcati} $p(\rho)=\rho^\gamma$, $\gamma>1$, see also \cite{lattanzio} for more general models, and the more recent result in \cite{choi}.

In recent years the attention of many researchers in this field turned to systems with \emph{many species}, motivated for example by opinion formation models \cite{during}, chemotaxis models with many species of cells and other aggregation phenomena in biology \cite{espejo,bruna_chapman}, pedestrian movements \cite{degond}. In many of those phenomena, the second order modelling approach via \eqref{eq:intro2} or similar seems more appropriate in that inertial effects, sometimes referred in these contexts as ``persistence'' effects, do play a role in the model's dynamics. However, while the mathematical theory of many species systems in the first-order modelling approach has been considered in many papers \cite{}, very little attention has been devoted to second order models with many species. 
In this paper we wish to provide a first contribution in that direction, by restricting for the moment to the one-dimensional case. 

More precisely, we will first of all tackle the existence theory of the system
\begin{equation} 
\label{sistema1}
\begin{dcases} 
\frac{\partial \rho}{\partial t} +\frac{\partial}{\partial x}(\rho v)=0, \\ \frac{\partial \eta}{\partial t}+\frac{\partial}{\partial x} (\eta w)=0, \\ \frac{\partial}{\partial t} (\rho v)+\frac{\partial}{\partial x}(\rho v^2)=-\sigma\rho v -\rho [K_\rho'\ast \rho +H_\rho'\ast \eta], \\ \frac{\partial}{\partial t} (\eta w)+\frac{\partial }{\partial x} (\eta w^2)=-\sigma\eta w -\eta [K_\eta'\ast\eta +H_\eta'\ast\rho ],
\end{dcases}
\end{equation}
equipped with initial data
\begin{equation}
\begin{dcases}
(\rho, v)(t=0)=(\overline{\rho},\overline{v}), \\ (\eta,w)(t=0)=(\overline{\eta},\overline{w}).
\end{dcases}
\end{equation}
In  system \eqref{sistema1}, $\rho$ and $\eta$ are probability measures modelling two species of agents, or individuals, $v$ and $w$ are the corresponding Eulerian velocities of the two species, $\sigma >0$ is the damping parameter, $H_\rho, H_\eta, K_\rho$, $K_\eta$ are smooth (to an extent to be specified later) given space-depending potentials. $K_\rho$ and $K_\eta$ are called \textit{self-interaction} potentials as they describe the interaction between the agents of same species, $H_\rho$ and $H_\eta$ are called \textit{cross-interaction} potentials and model the interaction between the agents of opposing species. The convolutions in \eqref{sistema1} are meant with respect to the space variable. All potentials appears in the system with their first derivative. This choice of ours is merely motivated by the fact that all those terms should be considered as gradients of potential energies.

System \eqref{sistema1} has a natural discrete \emph{particle} counterpart. Let us consider $x_1,	\ldots,x_N$ as $N$ particles of the first species with masses $m_1, \ldots, m_N,$ and $y_1, \ldots, y_M$ as $M$ particles of the second species with masses $n_1, \ldots, n_M.$ The dynamics of $x_i$ and $y_j$ is determined by the following equations
\begin{equation}
\label{neweq}
\begin{cases}
\ddot{x}_i(t)=-\sigma \dot{x}_i(t)-\sum_{k\neq i}m_k K'_\rho \big(x_i(t)-x_k(t)\big)-\sum_kn_k  H'_\rho \big(x_i(t)-y_k(t)\big), \\  \ddot{y}_j(t)=-\sigma \dot{y}_j(t)-\sum_{k\neq j}n_k K'_\eta \big(y_j(t)-y_k(t)\big)-\sum_km_k H'_\eta \big(y_j(t)-x_k(t)\big),
\end{cases}
\end{equation}
 with $i=1,\ldots ,N$ and $j=1,\ldots ,M$ and the following initial data
\[
\begin{dcases} x_i(0)=\overline{x}_i, \\ \dot{x}_i(0)=\overline{v}_i,
\end{dcases} \qquad \begin{dcases} y_j(0)=\overline{y}_j, \\ \dot{y}_j(0)=\overline{w}_j. \end{dcases}
\]

The candidate large-friction of \eqref{sistema1} is the first order system
\begin{equation}
\label{sistema2}
\begin{dcases} \frac{\partial \rho}{\partial t}=\frac{\partial}{\partial x}\big[ \rho K_\rho '\ast\rho +\rho H_\rho '\ast\eta \big], \\ \frac{\partial\eta}{\partial t} =\frac{\partial}{\partial x}\big[ \eta K_\eta '\ast\eta +\eta H_\eta '\ast\rho \big]\,,
\end{dcases}
\end{equation}
which was extensively studied in \cite{difrafag}, see also \cite{DFEF} for the case with cross-diffusion terms.

Our work contributes to the above line of research in what follows. We stress that our results only deal with the one space dimensional case.
\begin{itemize}
    \item [(i)] In the case of smooth interaction potentials, we provide a well-posedness result in the $2$-Wasserstein space of probability measures. Our main result is contained in Theorem \ref{main_th}.
    \item [(ii)] We then investigate the large damping limit and prove that, under a suitable rescaling, our system converges towards the corresponding first order model. Our result in this framework is stated in Theorem \ref{th:large_damping}. We observe that, at least to our knowledge, the technique used in Theorem \ref{th:large_damping} was never used in the one-species case.
    \item [(iii)] We then consider the case of Newtonian potentials for the self-interaction part and suitably coercive external potentials and prove a large-time collapse result in Theorem \ref{th:newtonian}.
\end{itemize}

The paper is structured as follows. In section \ref{sec:prelim} we introduce the main concepts of gradient flows in Wasserstein spaces that we need in our paper, we introduce the large-damping scaling limit, we state our model in a suitable Lagrangian framework, and we state our main results (see subsection \ref{subsec:mainres}). In section \ref{sec:existence} we prove Theorem \ref{main_th}, essentially following the classical strategy of \cite{brezis}, which also used in \cite{gangbo} for the one-species case. In section \ref{sec:large_damping} we prove Theorem  \ref{th:large_damping}. In Section \ref{sec:newtonian} we consider the case of Newtonian potentials and prove existence of sticky solutions and the large time collapse to Dirac deltas stated in Theorem \ref{th:newtonian}. Finally, in Section \ref{sec:numerics} we provide some numerical simulations.

\section{Preliminaries and main results}\label{sec:prelim}
In what follows we will set the notations, the assumptions, and introduce definitions that will be used throughout the paper, see Subsections \ref{subsec:wass} and \ref{subsec:assumptions}. Subsections \ref{subsec:particlessystem} and \ref{subsec:lagrangiandescription} are devoted to the precise description of system \eqref{sistema1} in terms of particles and Lagrangian coordinates respectively. In Subsection \ref{sec:large_dam} we provide a formal argument for the large damping limit of system \eqref{sistema1} towards \eqref{sistema2}. Finally Subsection \ref{subsec:mainres} collects the main results of the paper.

\subsection{One dimensional Wasserstein distance}\label{subsec:wass}
We start introducing some preliminaries and  definitions on the metric structure; the reader can refer to the classical references \cite{AGS,villani} for further details.
Let $\mathcal{P}(\mathbb{R}^n)$ be the set of Borel probability measures on $\mathbb{R}^n.$ Given $\mu\in\mathcal{P}(\mathbb{R}^n)$ and a Borel map $T:\mathbb{R}^n\rightarrow\mathbb{R}^m$, we denote by $\nu\coloneqq T\#\mu\in\mathcal{P}(\mathbb{R}^m)$ the \textit{push-forward of $\mu$ through $T$} defined by
\[ \nu (A)=\mu\big(T^{-1}(A)\big) \qquad \text{for all Borel sets $A\in\mathbb{R}^m.$}\]

We denote by $\mathcal{P}_2(\mathbb{R}^d)$ the set of probability measures on $\mathbb{R}^d$ with finite second moment, i.e., $\int_{\mathbb{R}^d}\abs{x}^2\,d\mu(x)<\infty$ for all $\mu\in\mathcal{P}_2(\mathbb{R}^d).$ The \textit{$2$-Wasserstein distance} $W_2(\mu,\nu)$ between two measures $\mu,\nu\in\mathcal{P}_2(\mathbb{R}^d)$ is defined by
\begin{equation}
\label{def:wassdist}
W_2^2(\mu,\nu)=\inf_{\bm\gamma\in\Pi(\mu,\nu)}\bigg\{ \iint_{\mathbb{R}^d\times\mathbb{R}^d} \abs{x-y}^2\,d\bm{\gamma}(x,y) \bigg\}, 
\end{equation}
where $\Pi(\mu,\nu)$ denotes the class of transport plans between $\mu$ and $\nu$, i.e., the probability measures $\bm{\gamma}$ on $\mathbb{R}^d\times\mathbb{R}^d$ that satisfy the conditions
\[\pi^1\#\bm{\gamma}=\mu, \qquad\pi^2\#\bm{\gamma}=\nu, \]
where $\pi^i$ is the projection operator on the $i$-th component of the product space. By introducing the class of optimal plans between $\mu$ and $\nu$, i.e., minimizers of \eqref{def:wassdist}, denoted by $\Pi_o(\mu,\nu)$, the Wasserstein distance can be rewritten as
\begin{equation}
\label{wass}
W_2^2(\mu,\nu)= \iint_{\mathbb{R}^d\times\mathbb{R}^d} \abs{x-y}^2\,d\bm{\gamma}(x,y) \qquad \bm{\gamma}\in\Pi_o(\mu,\nu). 
\end{equation}

In the one-dimensional case, there exists a unique optimal plan $\bm{\gamma}\in\Pi_o(\mu,\nu)$ for which the infimum in \eqref{def:wassdist} is attained, and it can be characterised by the monotone rearrangements of $\mu$ and $\nu$: given $\mu\in\mathcal{P}(\mathbb{R})$, its monotone rearrangement is
\[ X_\mu (m)\coloneqq \inf \{ x\, :\, M_\mu(x)>m\} \qquad \text{for all $m\in\Omega$}, \]
where $\Omega\coloneqq (0,1)$ and $M_\mu$ is the cumulative distribution of the measure $\mu$, i.e.,
\[ M_\mu(x)\coloneqq \mu\big( (-\infty,x] \big) \qquad \text{for all $x\in\mathbb{R}.$}\]
The map $X_\mu$ is right-continuous and nondecreasing and satisfies, by denoting the one-dimensional Lebesgue measure on $\Omega$ by $\mathfrak{m}$,
\[ \big( X_\mu \big)\#\mathfrak{m}=\mu, \qquad \int_\mathbb{R}\zeta(x)\,\mu (dx)=\int_\Omega \zeta\big( X_\mu(m) \big)\,dm \]
for all Borel maps $\zeta:\mathbb{R}\rightarrow\mathbb{R}.$ In particular, $\mu\in\mathcal{P}_2(\mathbb{R})$ if and only if $X_\mu\in L^2(\Omega)$. Moreover, the joint map $X_{\mu,\nu}:\Omega\rightarrow \mathbb{R}\times\mathbb{R}$ defined by $X_{\mu,\nu}(m)\coloneqq \big( X_\mu (m),X_\nu (m) \big)$ characterises the optimal transport plan $\bm\gamma\in\Pi_o(\mu,\nu)$ by the formula
\[ \bm{\gamma}=\big( X_{\mu,\nu} \big) \#\mathfrak{m},\]
according to which
\[ W_2^2(\mu,\nu)=\int_\Omega \abs{X_\mu (m)-X_\nu (m)}^2\,dm. \]
We further recall that, introducing the closed convex set of right-continuous non-decreasing functions in the Hilbert space $L^2(\Omega ),$ i.e.,
\begin{equation}\label{cone}
\mathcal{K}\coloneqq \{ X\in L^2(\Omega )\; :\; X \, \text{is non-decreasing} \},
\end{equation} 
the map 
\begin{equation}\label{isometry}
    \Psi :\mathcal{P}_2(\mathbb{R}) \ni\mu\mapsto X_\mu\in\mathcal{K}
\end{equation} 
is a distance-preserving bijection between the space of probability measures with finite second moments $\mathcal{P}_2(\mathbb{R})$ and the convex cone $\mathcal{K}$ of non-decreasing $L^2$-functions.

%\simone{da controllare se  questa parte serve.}
%We recall, following \cite{ags}, that a \textit{constant speed geodesic} connecting $\mu,\nu\in\mathcal{P}_2(\mathbb{R})$ is
%\[ \gamma_t=\big( (1-t)\pi_1+t\pi_2\big)\#\bm{\gamma} \]
%with $\bm{\gamma}\in\Gamma_o(\mu , \nu).$
%Let, now, $\phi : \mathcal{P}_2(\mathbb{R})\to (-\infty,+\infty ]$ be a proper functional on $\mathcal{P}_2(\mathbb{R}).$ We say that $\phi$ is \textit{$\lambda$-convex along geodesics} if for all $\mu, \nu\in\mathcal{P}_2(\mathbb{R})$ there exists $\bm{\gamma}\in\Gamma_o(\mu,\nu)$ and $\lambda\in\mathbb{R}$ such that, for all $t\in [0,1],$
%\[ \phi (\gamma_t)\leq (1-t)\phi(\mu)+t\phi(\nu)-\frac{\lambda}{2}t(1-t)W_2^2(\mu,\nu). \]

Since we are dealing with a two-species system, we will work on the product space $\mathcal{P}_2(\mathbb{R})\times\mathcal{P}_2(\mathbb{R})$.  For all $\bm{\mu}=(\mu _1,\mu _2),$ $\bm{\nu}=(\nu _1,\nu _2)\in\mathcal{P}_2(\mathbb{R})\times\mathcal{P}_2(\mathbb{R}),$ we define the \textit{product Wasserstein distance} as follows
\[ \mathcal{W}_2^2(\bm{\mu},\bm{\nu})=W_2^2(\mu_1,\nu_1)+W_2^2(\mu_2,\nu_2). \]

%The notion of geodesics and the $\lambda$-convexity property can be generalised to the product space. In particular, a constant speed geodesic $\bm{\gamma}_t$ connecting $\bm{\mu}$ and $\bm{\nu}$ is
%\begin{align*}
%& \bm{\gamma}_t=(\gamma_t^1,\gamma_t^2) \\ &\gamma_t^1=\big( (1-t)\pi_1+t\pi_2 \big)\#\gamma_1, \qquad \gamma_t^2=\big( (1-t)\pi_1+t\pi_2 \big)\#\gamma_2
%\end{align*}
%with $\gamma_1\in\Gamma _o (\mu_1,\nu_1)$ and $\gamma_2\in\Gamma_o(\mu_2,\nu_2).$

%Let $\mathfrak{F}:\mathcal{P}_2(\mathbb{R})\times\mathcal{P}_2(\mathbb{R})\to (-\infty,+\infty]$ be a proper functional. We say that $\mathfrak{F}$ is $\lambda$-convex along geodesics on $\mathcal{P}_2(\mathbb{R})^2$ if for all $\bm{\mu}=(\mu_1,\mu_2),$ $\bm{\nu}=(\nu_1,\nu_2)\in\mathcal{P}_2(\mathbb{R})\times\mathcal{P}_2(\mathbb{R})$ there exists a geodesic curve $\bm\gamma_t$ connecting $\bm{\mu}$ and $\bm{\nu}$ such that, for all $t\in [0,1],$
%\[ \mathfrak{F}(\bm{\gamma}_t)\leq (1-t)\mathfrak{F}(\bm{\mu})+t\mathfrak{F}(\bm{\nu})-\frac{\lambda}{2}t(1-t)\mathcal{W}_2^2(\bm{\mu},\bm{\nu}). \]

\subsection{Main assumptions}\label{subsec:assumptions}

In this subsection we collect the main assumptions we will need in the rest of the paper. We start by specifying the class of interaction potentials we are going to use.
\begin{definition}
A function $K:\mathbb{R}\to\mathbb{R}$ is called an \textit{admissible potential} if 
\begin{equation}\label{A}
K\in C^1(\mathbb{R}),\, K(0)=0 \mbox{ and }K(-x)=K(x).\tag{\textbf{A}} 
\end{equation}
An admissible potential $K$ is said to be \textit{sub-quadratic at infinity} if there exists a constant $C>0$ such that \begin{equation}\label{SQ}
K(x)\leq C(1+\abs{x}^2)\mbox{ for all }x\in\mathbb{R}.
\tag{\textbf{SQ}}
\end{equation} 
An admissible potential $K$ has a \emph{sub-linear gradient} if there exists $C>0$ such that 
\begin{equation}\label{SL}
K'(x)\leq C(1+|x|)\mbox{ for all }x\in \mathbb{R}.\tag{\textbf{SL}} 
\end{equation}
We call an admissible potential \emph{attractive} if 
\begin{equation}\label{XX}
K(x)=k(|x|)\geq 0,\mbox{ for all }x\in\mathbb{R}\mbox{ and }K'(r)r\geq 0\mbox{ for all }r\in\mathbb{R}.\tag{\textbf{AT}}  
\end{equation}

%$K$ is said to be $\lambda$-\textit{convex} for some $\lambda\in\mathbb{R}$ if \\ \textbf{(Co)} the map $\mathbb{R}\ni x\mapsto K(x)-\frac{\lambda}{2}\abs{x}^2\in\mathbb{R}$ is convex.

%$K$ is said to be \textit{radial} if \\ \textbf{(Rad)} there exists a function $k$ such that $K(x)=k(\abs{x}).$

%$K$ is said to be \textit{attractive} if \\ \textbf{(Att)} $K$ is radial and $k'(\abs{x})>0.$

%$K$ is said to be \textit{repulsive} if \\ \textbf{(Rep)} $K$ is radial and $k'(\abs{x})<0.$
\end{definition}

In Section \ref{sec:newtonian} we will also take into account the action of \emph{external} potentials in the dynamics. More precisely, we consider  $A\,\in C^2(\mathbb{R})$ and assume that there exist the positive constants $\lambda$ and $\alpha$ such that 
\begin{equation}\label{H1}
 A(x)\geq\lambda \abs{x}^2\tag{\textbf{H1}}   \end{equation}
 and
 \begin{equation} \label{H2}
    xA'(x)\geq \alpha \abs{x}^2 \tag{\textbf{H2}}  
\end{equation}
for all $ x\in\mathbb{R}$.

%In what follows we will intensely use the variational structure beside  \eqref{sistema1}, that is carried from \eqref{systlimit} and is related to the functional
%\begin{align*}
%\mathcal{F}(X,Y)=\frac{1}{2} & \into K_\rho\big(X(m)-X(m')\big)\,dm'+\frac{1}{2}\into K_\eta \big(Y(m)-Y(m')\big)\,dm' \\+& \into H\big( Y(m)-X(m')\big)\,dm'.
%\end{align*}
Denoting with $\langle\cdot,\cdot\rangle_{\Ldod}$ the inner product on the space $\Ldod$, that is 
\[
\langle Z_1,Z_2\rangle_{\Ldod}=\into \big[ X_1(s)X_2(s)+Y_1(s)Y_2(s)\big]\, ds,
\]
for $Z_1=(X_1,Y_1)$ and $Z_2=(X_2,Y_2)$ in $\Ldod$, we recall below the notion of Fréchet sub-differential for a generic operator $\mathfrak{F}$ on a general Hilbert space.
\begin{definition} Let $H$ be a Hilbert space. For a given, proper and lower semi-continuous functional $\mathfrak{F}:H\rightarrow (-\infty,+\infty]$, we say that $Z\in H$ belongs to the sub-differential of $\mathfrak{F}$ at $\widetilde{Z}\in H$ if and only if
\[ \mathfrak{F}(R)-\mathfrak{F}(\widetilde{Z})\geq\langle Z,R-\widetilde{Z}\rangle_{H} +o(\norma{R-\widetilde{Z}}),
\]
as $\norma{R-\widetilde{Z}}\rightarrow 0$, with  $R\in H$. The sub-differential of $\mathfrak{F}$ at $\widetilde{Z}$ is denoted by $\partial\mathfrak{F}(\widetilde{Z}).$
\end{definition}

In particular, throughout the paper, we will usually consider  as Hilbert spaces $H=L^2(\Omega)$ or $H=L^2(\Omega)^2.$
 
Let $I_\mathcal{K}:L^2(\Omega )\rightarrow [0,+\infty )$ be the indicator function of the $L^2$-convex cone $\mathcal{K}$ introduced in \eqref{cone}, that is
\[
I_\mathcal{K}(X)=
\begin{cases} 0 &\text{if}\; X\in\mathcal{K}, \\ +\infty &\text{otherwise}.
\end{cases}
\]
For a given $X\in L^2(\Omega)$, the sub-differential of $I_\mathcal{K}$ in $X$ is given by
\[ 
\partial I_\mathcal{K}(X)= \bigg\{Z\in L^2(\Omega )\, :\, I_\mathcal{K}(\widetilde{X})\geq I_\mathcal{K}(X)+\int_\Omega Z(m)(\widetilde{X}(m)-X(m))\,dm \quad \text{for all} \; \widetilde{X}\in\mathcal{K}\bigg\},
\]
or in its alternative form
\[
\partial I_\mathcal{K}(X)=
\begin{cases} \{ Z\in L^2(\Omega )\, :\, 0\geq\int_\Omega Z(m)(\widetilde{X}(m)-X(m))\,dm \quad \text{for all} \; \widetilde{X}\in\mathcal{K}\} &\text{if}\,X\in\mathcal{K}, \\ \emptyset & \text{otherwise}.
\end{cases}
\]

We conclude this subsection with the following  definition, which we borrow from \cite{gangbo}.
\begin{definition}\label{def:bounded_unif_cont} An operator $F:\mathcal{K}\to L^2(\Omega )$ is \textit{bounded} if there exists a constant $C\geq0$ such that
\[ \norma{F[X]}_{L^2(\Omega )}\leq C(1+\norma{X}_{L^2(\Omega )}) \qquad \text{for all} \; X\in\mathcal{K}. \]
An operator $F:\mathcal{K}\to L^2(\Omega )$ is \textit{pointwise linearly bounded} if there exists a constant $C_p\geq0$ such that
\[ \abs{F[X](m)}\leq C_p\big(1+\abs{X(m)}+\norma{X}_{L^1(\Omega)}\big) \qquad \text{for a.e.}\; m\in\Omega\;\text{and all}\;X\in\mathcal{K}. \]
An operator $F:\mathcal{K}\to L^2(\Omega )$ is \textit{uniformly continuous} if there exists a modulus of continuity $\omega$ such that
\[ \norma{F[X_1]-F[X_2]}_{L^2(\Omega)} \leq\omega \big(\norma{X_1-X_2}_{L^2(\Omega)}\big) \qquad \text{for all}\; X_1, X_2\in\mathcal{K}.\]
\end{definition}

\subsection{Particles system}
\label{subsec:particlessystem}
We dedicate this subsection to the study of sticky solutions in the finite dimensional case. Let $x=(x_1,\ldots,x_N)\in\mathbb{R}^N$ and $y=(y_1,\ldots,y_M)\in\mathbb{R}^M$ be the positions of particles of the first and second species respectively. The ``sticky'' condition clearly preserves the ordering of the particles, therefore  their evolution is confined in the closed convex set
\[ \mathbb{K}^N\times\mathbb{K}^M=\{ (x,y)\in\mathbb{R}^N\times\mathbb{R}^M \, :\, x_1\leq\ldots\leq x_N,\, y_1\leq\ldots\leq y_M\}.
\]
Setting $v=(v_1,\ldots ,v_N)\in\mathbb{R}^N$ and $w=(w_1,\ldots ,w_M)\in\mathbb{R}^M$ as the velocity vectors of particles of the first species and second species respectively, we consider the following system 
\begin{equation}\label{eq:particles}
\begin{cases} \dot{x}_i(t)=v_i(t), \\
\dot{y}_j(t)=w_j(t), \\
\dot{v}_i(t)=a_{i}(x(t))+b_{i}(x(t),y(t))-\sigma v_i(t), \\ 
 \dot{w}_j(t)=c_{j}(y(t))+d_{j}((x(t),y(t))-\sigma w_j(t),
\end{cases}\,
\end{equation}
for $i=1,\ldots ,N$ and $j=1,\ldots ,M.$ In system \eqref{eq:particles}, 
\begin{align*}
    & a_i(x)=-\sum_{k=1}^N m_k K'_\rho(x_i-x_k)\,,\quad b_i(x,y)=-\sum_{k=1}^M n_k H'_\rho(x_i-y_k)\,,\quad i=1,\ldots,N\,,\\
    & c_j(y)=-\sum_{k=1}^M n_k K'_\eta(y_i-y_k)\,,\quad d_j(x,y)=-\sum_{k=1}^N m_k H_\eta'(y_j-x_k)\,,\quad j=1,\ldots,M\,.
\end{align*}
The vector field 
\[
a(x):x\in\mathbb{K}^N\rightarrow \big(a_{1}(x),\ldots ,a_{N}(x)\big)\in\mathbb{R}^N
\]
models the interactions between particles of the first species and the $i$-th particle of the first species, while the $i$-th component of the vector field \[
b(x,y):(x,y)\in\mathbb{K}^N\times\mathbb{K}^M\rightarrow(b_{1}(x,y),\ldots ,b_{N}(x,y))\in\mathbb{R}^N
\]describes the interactions between the $i$-th particle of the first species and particles of the second species. Similarly one can describe the $j$-th component of the terms \[
c(y):y\in\mathbb{K}^M\rightarrow (c_{1}(y),\ldots ,c_{M}(y))\in\mathbb{R}^M,
\]
and 
\[
d(x,y):(x,y)\in\mathbb{K}^N\times\mathbb{K}^M\rightarrow (d_{1}(x,y),\ldots ,d_{M}(x,y))\in\mathbb{R}^M,
\]
respectively. 

Assuming that all the potentials in \eqref{eq:particles} are smooth enough (for example with $C^2$ regularity), a unique solution to \eqref{eq:particles} exists as long as particles occupy distinct positions. When two or more particles collide, we apply the concept of sticky particle solution sketched in the introduction. Following \cite{gangbo,ns}, the precise formalisation of sticky collisions requires the definition of the following normal cones
\begin{align*}
N_x\mathbb{K}^N&\coloneqq \{l\in\mathbb{R}^N \, :\, l\cdot (\widetilde{x}-x)\leq 0 \quad \text{for all} \; \widetilde{x}\in\mathbb{K}^N \}, \\ N_y\mathbb{K}^M&\coloneqq \{n\in\mathbb{R}^M \, :\, n\cdot (\widetilde{y}-y)\leq 0 \quad \text{for all}\;\widetilde{y}\in\mathbb{K}^M \}.
\end{align*}
Note that the normal cone $N_x\mathbb{K}^N$ is equal to the sub-differential $\partial I _{\mathbb{K}^N}(x)$ of the indicator function of $\mathbb{K}^N$ at the point $x$. When two particles of the same species collide, an instantaneous force is released and the respective particles velocities evolve as elements of the normal cones $N_x\mathbb{K}^N$ and $N_y\mathbb{K}^M$ respectively.
%the force that changes velocities of particoles of opposite species upon the impact is in the third cone $N_{(x,y)}(\mathbb{K}^N\times\mathbb{K}^M)$.
Given these premises, we can consider the second-order system of differential inclusions
\begin{equation}
\label{incl1}
\begin{dcases} \dot{x}=v, \\ \dot{y}=w, \\ \dot{v}+N_x\mathbb{K}^N\ni a(x)+b(x,y)-\sigma v, \\ \dot{w}+N_y\mathbb{K}^M\ni c(y)+d(x,y)-\sigma w.
\end{dcases}
\end{equation}
System \eqref{incl1} is justified as follows. Introducing the vector $\mathcal{W}(t)=(V(t),W(t))=e^{\sigma t}(v(t),w(t))$, from \eqref{eq:particles} we get
\[\dot{\mathcal{W}}(t)=e^{\lambda t}\mathcal{A}(x(t),y(t))\,,\]
where $\mathcal{A}(x,y)$ is the vector in $\mathbb{R}^{N+M}$ with components $a(x)+b(x,y)$ and $c(y)+d(x,y)$ respectively. Now, due to the smoothness of the interaction potentials, the vector field $\mathcal{A}(x,y)$ can be extended by continuity to the boundary of the cone $\mathbb{K}^N\times \mathbb{K}^M$. Therefore, as $\mathcal{W}$ and $(v,w)$ only differ by a scalar factor, a suitable modified version of the differential equation for $\mathcal{W}$ that keeps the dynamics in $\mathbb{K}^N\times \mathbb{K}^M$ is the differential inclusion
\[\dot{\mathcal{W}}(t)\in e^{\lambda t}\mathcal{A}(x(t),y(t)) + N_{x(t)}\mathbb{K}^N\times N_{y(t)}\mathbb{K}^M\,,\]
which easily yields the last two differential inclusions in \eqref{incl1}.

According to \cite{gangbo}, if $x:[0,\infty)\to\mathbb{K}^N$ satisfies the \textit{global sticky condition}, i.e., particles are not allowed to split after colliding, then the following monotonicity property on the family of normal cones $N_{x(t)}\mathbb{K}^N$ holds:
\[
N_{x(s)}\mathbb{K}^N\subset N_{x(t)}\mathbb{K}^N\qquad \text{for all $s<t$.}
\]

Hence, for any function $\zeta: [0,\infty)\to\mathbb{R}^N$ such that $\zeta(t)\in N_{x(t)}\mathbb{K}^N$, we have
\[ \int_s^t \zeta (r)\,dr\in N_{x(t)}\mathbb{K}^N \qquad \text{for all $s<t$.} \]
Consequently, integrating the last two equations in \eqref{incl1} on a time interval $[s,t]$, one obtains
\begin{align}
v(t)+\sigma x(t)+N_{x(t)}\mathbb{K}^N&\ni v(s)+\sigma x(s) +\int_s^ta(x(r))\,dr+\int_s^tb(x(r),y(r))\,dr,\label{eq:diff_incl_1}  \\ 
w(t)+\sigma y(t)+N_{y(t)}\mathbb{K}^M&\ni w(s)+\sigma y(s) +\int_s^tc(y(r))\,dr+\int_s^t d(x(r),y(r))\,dr.\label{eq:diff_incl_2} 
\end{align}
System \eqref{incl1}, together with \eqref{eq:diff_incl_1} and \eqref{eq:diff_incl_2}, can be rewritten in a more compact form in the new variables $(x,y,p,q)$ where $p$ and $q$ are defined by
\begin{align*}
    p(t)&=\int_s^t a(x(r))\,dr + \int_s^t b(x(r),y(r))\,dr+v(s)+\sigma x(s), \\ q(t)&=\int_s^t c(y(r))\,dr + \int_s^t d(x(r),y(r))\,dr+w(s)+\sigma y(s),
\end{align*} 
yielding the following first-order system of differential inclusions
\[
\begin{dcases} \dot{x}+\sigma x+N_x\mathbb{K}^N\ni p, \\ \dot{y}+\sigma y+N_y\mathbb{K}^M\ni q, \\ \dot{p}=a(x)+b(x,y), \\ \dot{q}=c(y)+d(x,y),
\end{dcases}
\]
with the additional characterisation of $v$ and $w$ in terms of $p$ and $q$
\begin{align*} v(t)+\sigma\int_s^t v(r)\,dr+N_{x}\mathbb{K}^N &\ni p(t), \\
w(t)+\sigma\int_s^t w(r)\,dr+N_{y}\mathbb{K}^M &\ni q(t) .
\end{align*} 

\subsection{Time scaling and formal large damping limit}\label{sec:large_dam}
One of the purposes of the present work is to study system \eqref{sistema1} in the \emph{large time / large damping regime}, namely we aim to send $\sigma\to +\infty$ in \eqref{sistema1} after having suitably rescaled the time variable. We start performing the scaling at the level of particles, namely for system \eqref{neweq}.
Consider the new time variable $\tau$ defined by
\begin{equation}
\label{timescale_part}
\tau=\frac{t}{\sigma}, 
\end{equation}
and introduce the scaled particle trajectories as follows: 
\begin{align*}
         x_i(t)&=\chi_i(\tau)=\chi_i(t/\sigma),\quad i=1,\ldots,N, \\  y_j(t)&=\xi_j(\tau)=\xi_j(t/\sigma),\quad j=1,\ldots,M. 
\end{align*}
Notice that we can scale the initial velocities accordingly as
\[
 \dot{\chi}_i(0)\coloneqq \overline{\nu}_i=\sigma\overline{v}_i, \qquad\dot{\xi}_j(0)\coloneqq \overline{\omega}_j=\sigma\overline{w}_j.
\]
Hence, system \eqref{neweq} becomes
\begin{gather*}
\sigma^{-2}\ddot{\chi}_i(\tau)=-\dot{\chi}_i(\tau)-\sum_{k\neq i} m_k  K'_\rho\big(\chi_i(\tau)-\chi_k(\tau)\big)-\sum_kn_k  H'_\rho\big( \chi_i(\tau)-\xi_k(\tau)\big), \\ \sigma ^{-2}\ddot{\xi}_j(\tau)=-\dot{\xi}_j(\tau)-\sum_{k\neq j}n_k  K'_\eta\big(\xi_j(\tau)-\xi_k(\tau)\big)-\sum_km_k  H'_\eta \big(\xi_j(\tau)-\chi_k(\tau)\big).
\end{gather*}
A formal limit $\sigma\to+\infty$ leads to the following first-order system of differential equations for particle positions
\begin{gather*}
\dot{\chi}_i(\tau)=-\sum_{k\neq i} m_k K'_\rho \big(\chi_i(\tau)-\chi_k(\tau)\big)-\sum_kn_k H'_\rho \big(\chi_i(\tau)-\xi_k(\tau)\big), \\ \dot{\xi}_j(\tau)=-\sum_{k\neq j} n_k K'_\eta \big(\xi_j(\tau)-\xi_k(\tau)\big)-\sum_km_k H'_\eta \big(\xi_j(\tau)-\chi_k(\tau)\big).
\end{gather*}
A similar time scaling can be performed at the level of \eqref{sistema1}. Using the definition of $\tau$ in \eqref{timescale_part} and considering $(\widetilde{\rho},\widetilde{v},\widetilde{\eta},\widetilde{w})$ solution to
\begin{equation*} 
\begin{dcases} 
\frac{\partial \widetilde{\rho}}{\partial t} +\frac{\partial}{\partial x}(\widetilde{\rho}\widetilde{v})=0, \\ \frac{\partial \widetilde{\eta}}{\partial t}+\frac{\partial}{\partial x} (\widetilde{\eta} \widetilde{w})=0, \\ \frac{\partial}{\partial t} (\widetilde{\rho}\widetilde{v})+\frac{\partial}{\partial x}(\widetilde{\rho}\widetilde{v}^2)=-\sigma \widetilde{\rho}\widetilde{v} -\widetilde{\rho} [K_\rho'\ast \widetilde{\rho} +H_\rho'\ast \widetilde{\eta}], \\ \frac{\partial}{\partial t} (\widetilde{\eta}\widetilde{w})+\frac{\partial }{\partial x} (\widetilde{\eta}\widetilde{w}^2)=-\sigma \widetilde{\eta}\widetilde{w} -\widetilde{\eta} [K_\eta'\ast \widetilde{\eta} +H_\eta'\ast \widetilde{\rho} ],
\end{dcases}
\end{equation*}
we can introduce the rescaled densities and velocities as
\begin{align*}
& \rho(\tau, x)=\widetilde{\rho}(t,x), \qquad v(\tau,x)=\sigma \widetilde{v}(t,x), \\ 
& \eta(\tau ,x)=\widetilde{\eta}(t,x), \qquad w(\tau,x)=\sigma \widetilde{w}(t,x).
\end{align*}
Then the quadruple $(\rho,v,\eta,w)$ solves
\begin{equation}
\label{eulerresc}
\begin{dcases} \frac{\partial \rho}{\partial \tau}+\frac{\partial}{\partial x}(\rho v)=0, \\ \frac{\partial \eta}{\partial \tau}+\frac{\partial}{\partial x}(\eta w)=0, \\ \sigma^{-2}\bigg[ \frac{\partial}{\partial \tau}(\rho  v)+\frac{\partial}{\partial x}(\rho v^2)\bigg]=-\rho v-\rho[K_{\rho }'\ast\rho+H_{\rho }'\ast\eta ], \\ \sigma ^{-2}\bigg[ \frac{\partial}{\partial \tau}(\eta w)+\frac{\partial}{\partial x}(\eta w^2)\bigg]=-\eta w -\eta [K_{\eta }'\ast\eta +H_{\eta }'\ast\rho ],
\end{dcases}
\end{equation}
and formally, as $\sigma\to\infty$, we have
\begin{equation}
\label{systlimit}
\begin{dcases} \frac{\partial \rho}{\partial \tau}=\frac{\partial}{\partial x}\big[ \rho K_\rho '\ast\rho +\rho H_\rho '\ast\eta \big], \\ \frac{\partial\eta}{\partial \tau} =\frac{\partial}{\partial x}\big[ \eta K_\eta '\ast\eta +\eta H_\eta '\ast\rho \big].
\end{dcases}
\end{equation}

\subsection{Lagrangian description of the continuum model}
\label{subsec:lagrangiandescription}
We now transpose the  considerations above in terms of a Lagrangian description for system \eqref{sistema1}. 
For any $X\in\mathcal{K}$, where $\mathcal{K}$ denotes the convex cone introduced in \eqref{cone}, we define the set
\begin{equation}
\Omega_X\coloneqq \left\{ m\in \Omega \, : \, \mbox{$X$ is constant in an open neighborhood of $m$} \right\},
\end{equation}
and the closed subspace
\begin{equation} 
\mathcal{H}_X=\{ Z\in L^2(0,1) \, :\, \text{$Z$ is constant on each interval $(a,b)\in\Omega_X$}\}. 
\end{equation}
A crucial quantity in the following analysis is the projection $\mathsf{P}_{\mathcal{H}_X}:L^2\rightarrow \mathcal{H}_X$ given by
\begin{equation}\label{projoper}
\mathsf{P}_{\mathcal{H}_X}(U)=\begin{cases}
\displaystyle \fint_a^b U(m)\,dm & \text{in any maximal interval $(a,b)\in\Omega_X$},\\
\displaystyle
U &\text{a.e. in $\Omega\setminus\Omega_X$}, 
\end{cases}
\end{equation}
for all $U\in L^2(\Omega)$. The proof of the following Lemma is an easy consequence of Jensen's inequality, see \cite[Lemma 2.2]{gangbo}.
\begin{lemma}[$\mathcal{H}_X$-contraction] \label{lemmacontraction}  Let $\psi :\mathbb{R}\to [0,\infty)$ be a convex l.s.c. function. Then $\mathsf{P}_{\mathcal{H}_X}$ is dominated by $X$, namely
\[ \int_\Omega \psi\big(\mathsf{P}_{\mathcal{H}_X}(Y)\big)\,dm\leq \int_\Omega \psi (Y)\,dm \qquad \text{for all $X\in\mathcal{K}$ and all $Y\in L^2(\Omega)$}, \]
and we write $\mathsf{P}_{\mathcal{H}_X}\prec X$.
\end{lemma}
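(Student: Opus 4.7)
The plan is to reduce the estimate to a pointwise-on-intervals application of Jensen's inequality, using the explicit description of $\mathsf{P}_{\mathcal{H}_X}$ in \eqref{projoper}.

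First, I would decompose the domain as $\Omega = \Omega_X \cup (\Omega\setminus\Omega_X)$. By definition, $\Omega_X$ is the set of points on which $X$ is locally constant; since $X\in \mathcal{K}$ is right-continuous and non-decreasing, $\Omega_X$ is open, and hence can be written as an at-most-countable disjoint union of maximal open intervals $\{(a_k,b_k)\}_{k}$. On the complement $\Omega\setminus\Omega_X$ the projection formula gives $\mathsf{P}_{\mathcal{H}_X}(Y)=Y$ almost everywhere, so the contributions to both sides of the claimed inequality coincide on that set.

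On each maximal interval $(a_k,b_k)$, the projection is the constant value $c_k:=\fint_{a_k}^{b_k} Y(m)\,dm$. Here the convexity (and lower semicontinuity) of $\psi$, together with Jensen's inequality applied to the probability measure $\frac{dm}{b_k-a_k}$ on $(a_k,b_k)$, yields
\[
\psi(c_k)\;\leq\;\fint_{a_k}^{b_k}\psi(Y(m))\,dm.
\]
Multiplying by $b_k-a_k$ gives $\int_{a_k}^{b_k}\psi(\mathsf{P}_{\mathcal{H}_X}(Y))\,dm\leq \int_{a_k}^{b_k}\psi(Y)\,dm$. Summing over $k$ and adding the equality on $\Omega\setminus\Omega_X$ produces the desired global inequality.

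The only mildly delicate point, which I would mention explicitly, is ensuring that Jensen's inequality is applicable in the stated form: since $\psi\geq 0$ and $Y\in L^2(\Omega)\subset L^1_{\mathrm{loc}}$, the average $c_k$ is well-defined and $\psi(c_k)$ makes sense in $[0,+\infty]$, while $\psi(Y)$ is a non-negative measurable function, so both integrals are well-defined in $[0,+\infty]$ and the termwise summation over the countable family $\{(a_k,b_k)\}$ is justified by monotone convergence. No obstacle beyond this bookkeeping arises; the lemma is essentially a restatement of Jensen's inequality fibered over the constancy intervals of $X$.
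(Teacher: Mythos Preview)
Your proposal is correct and follows essentially the same approach as the paper: the paper does not spell out a proof but simply notes that the lemma is an easy consequence of Jensen's inequality (referring to \cite[Lemma~2.2]{gangbo}), which is precisely the argument you carry out explicitly by decomposing $\Omega$ into the maximal constancy intervals of $X$ and the complement.
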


Consider a quadruple $(\rho,\eta,v,w)$ solution to \eqref{sistema1} and define the maps $X,Y:[0,\infty)\times \Omega\to\mathbb{R}$ and the velocities $V,W:[0,\infty)\times \Omega\to\mathbb{R}$ as follows
\begin{align*}
&X(t,\cdot)=\Psi(\rho (t,\cdot)), \quad
&V(t,\cdot)=v(t,X(t,\cdot))=\partial _tX(t,\cdot),
\\
&Y(t,\cdot)=\Psi(\eta (t,\cdot)), \quad  &W(t,\cdot)=w(t,Y(t,\cdot))=\partial _tY(t,\cdot),
\end{align*}
where $\Psi$ is the isometry defined in \eqref{isometry} that associates to a probability measure its monotone rearrangement. In the new unknowns $(X,Y,V,W),$ system \eqref{sistema1} can be (formally) rephrased as
\[
\begin{dcases}
\partial_t X(t)=V(t), \\
\partial_t Y(t)=W(t), \\
\partial_t V(t)=-\int_\Omega K_\rho' \big(X(m)-X(m')\big)\,dm'-\int_\Omega H_\rho'\big( X(m)-Y(m')\big)\,dm' -\sigma V(t),
\\
\partial_t W(t)=-\int_\Omega K_\eta' \big(Y(m)-Y(m')\big)\,dm'-\int_\Omega H_\eta'\big( Y(m)-X(m')\big)\,dm'-\sigma W(t) .
\end{dcases}
\]
Similarly to Section \ref{subsec:particlessystem}, one can show that the previous system can be reformulated in terms of differential inclusions to incorporate particles collisions. Moreover, since we will investigate on the large-damping limit, through the paper we consider the Lagrangian counterpart of the rescaled system \eqref{eulerresc}. Then, according to the previous calculations, we get   the system
\begin{equation}
\label{sisteps}
\begin{dcases}
\varepsilon\dot{X}(t,m)+X(t,m)+\partial I_\mathcal{K}(X(t,m))\ni\varepsilon\overline{V}(m)+\overline{X}(m) +\int_0^t F[X(\cdot,r),Y(\cdot,r)](m)\,dr, \\ \varepsilon\dot{Y}(t,m)+Y(t,m)+\partial I_\mathcal{K}(Y(t,m))\ni\varepsilon\overline{W}(m)+\overline{Y}(m) +\int_0^t G[X(\cdot,r),Y(\cdot,r)](m)\,dr,
\end{dcases}
\end{equation}
with  $\varepsilon \coloneqq \sigma ^{-2}$ and where we have denoted by 
\[F:\mathcal{K}\times\mathcal{K}\rightarrow L^2(\Omega)\,\mbox{ and }\, G:\mathcal{K}\times\mathcal{K}\rightarrow L^2(\Omega)\]
the operators
\begin{align}
    & F[X,Y](m)= -\int_\Omega K_\rho ' \big(X(r,m)-X(r,m')\big)\,dm'-\int_\Omega H_\rho' \big(X(r,m)-Y(r,m')\big)\,dm', \label{eq:F}\\
    & G[X,Y](m)=-\int_\Omega K_\eta ' \big(Y(r,m)-Y(r,m')\big)\,dm'-\int_\Omega H_\eta' \big(Y(r,m)-X(r,m')\big)\,dm'\,. \label{eq:G}
\end{align}

We observe that if $K_\rho, H_\rho, K_\eta, H_\eta$ are $C^1$ functions that satisfy \eqref{A} and \eqref{SL} then the two operator $F$ and $G$ defined in \eqref{eq:F} and \eqref{eq:G} are uniformly continuous and bounded  according to Definition \ref{def:bounded_unif_cont}.

\begin{definition}[Lagrangian solutions]
Let $H_\rho, K_\rho, H_\eta, K_\eta \in C^1(\mathbb{R})$ potentials satisfying \eqref{A} and \eqref{SL}. Let $\overline{X},\overline{Y}\in\mathcal{K}$ and  $\overline{V},\overline{W} \in L^2(\Omega )$ be given. A \textit{Lagrangian solution} to \eqref{sisteps} with initial data $(\overline{X},\overline{Y},\overline{V},\overline{W})$ is a pair $(X,Y)\in \text{Lip}_\textsubscript{loc}([0,\infty);\mathcal{K})\times\text{Lip\textsubscript{loc}}([0,\infty);\mathcal{K})$ satisfying $X(0)=\overline{X}, Y(0)=\overline{Y}$ and \eqref{sisteps} for a. e. $t\in [0,\infty).$
\end{definition}

%where we have defined
%\[
%F[X,Y]= -\int_\Omega K_\rho ' (X(r,m)-X(r,m'))\,dm'-\int_\Omega H_\rho' (X(r,m)-Y(r,m'))\,dm'
%\]
%\[
%G[X,Y]=-\int_\Omega K_\eta ' (Y(r,m)-Y(r,m'))\,dm'-\int_\Omega H_\eta' (Y(r,m)-X(r,m'))\,dm'.
%\]

In order to consider the case of Newtonian potentials, we introduce the following notion of \emph{generalised Lagrangian solutions} for system \eqref{sisteps} under globally sticky dynamics, see \cite{gangbo}. 

\begin{definition}\label{def:generalised}
A \textit{generalised solution} to the system
\eqref{sisteps} is a pair $(X,Y)\in\text{Lip}_\textsubscript{loc}([0,\infty);\mathcal{K})\times\text{Lip}_\textsubscript{loc}([0,\infty);\mathcal{K})$ such that
\begin{enumerate}
    \item \textit{Differential inclusion:}
    \[
    \begin{dcases} \varepsilon\dot{X}(t)+X(t)+\partial I_\mathcal{K}(X(t))\ni\varepsilon \overline{V}+\overline{X}+\int_0^t \Theta(s)\,ds, 
    \\
    \varepsilon\dot{Y}(t)+Y(t)+\partial I_\mathcal{K}(Y(t))\ni \varepsilon\overline{W}+\overline{Y}+\int_0^t \Xi(s)\,ds,
    \end{dcases}
    \]
    holds for a.e. $t\in (0,\infty),$
    for some maps $\Theta,\Xi\in L_ \textsubscript{loc}^\infty([0,\infty);L^2(\Omega))\times L_ \textsubscript{loc}^\infty([0,\infty);L^2(\Omega))$ with
    \begin{equation} 
    \label{eq:theta}
    \Theta-F[ X(t),Y(t)]\in H_{X(t)}^\bot \qquad\text{and}\qquad \Theta\prec F[X(t),Y(t)] \quad\text{for a.e. $t\in (0,\infty)$} 
    \end{equation}
    and, similarly,
    \begin{equation} 
    \label{eq:xi}
    \Xi-G[ X(t),Y(t)]\in H_{Y(t)}^\bot \qquad\text{and}\qquad \Xi\prec G[X(t),Y(t)] \quad\text{for a.e. $t\in (0,\infty),$} 
    \end{equation}
    where $F[ X(t),Y(t)]$ and $G[ X(t),Y(t)]$ are the operators defined in \eqref{eq:F} and \eqref{eq:G}.
    \item \textit{Semigroup property:} for all $t\geq t_1\geq 0,$ the right derivatives $V=\frac{d^+}{dt}X$ and $W=\frac{d^+}{dt}Y$ satisfy

    \begin{numcases}{}
       \varepsilon V(t)+X(t)+\partial I_\mathcal{K}( X(t)) \ni \varepsilon V(t_1)+X(t_1)+\int_{t_1}^t \Theta(s)\,ds, \label{def:semigroupx}\\ \varepsilon W(t)+Y(t)+\partial I_\mathcal{K}(Y(t))\ni \varepsilon W(t_1)+Y(t_1)+\int_{t_1}^t \Xi(s)\,ds.
       \label{def:semigroupy}
    \end{numcases}
    \item \textit{Projection formula:} for all $t\geq t_1\geq 0$
    \begin{numcases}{} X(t)=\mathsf{P}_\mathcal{K} \bigg(X(t_1)+ \frac{1}{\varepsilon}
    (t-t_1)\big(X(t_1)+\varepsilon V(t_1)\big) -\frac{1}{\varepsilon}\int_{t_1}^t X(s)\,ds +\frac{1}{\varepsilon}\int_{t_1}^t (t-s)\Theta(s)\,ds \bigg), \label{def:projx}
    \\
    Y(t)=\mathsf{P}_\mathcal{K} \bigg( Y(t_1)+ \frac{1}{\varepsilon} (t-t_1)\big(Y(t_1)+\varepsilon W(t_1)\big) -\frac{1}{\varepsilon}\int_{t_1}^t Y(s)\,ds +\frac{1}{\varepsilon}\int_{t_1}^t (t-s)\Xi(s)\,ds \bigg). \label{def:projy}
    \end{numcases}
\end{enumerate}
\end{definition}
Note that if we choose $\Theta (t)\coloneqq F[X(t),Y(t)]$ and $\Xi (t) \coloneqq G[X(t),Y(t)]$ with $F$ and $G$ as in \eqref{eq:F} and \eqref{eq:G} and the interaction potentials $K_\rho$, $K_\eta$, $H_\rho$ and $H_\eta$ satisfying \eqref{A} and \eqref{SL}, then any Lagrangian solution is a generalised Lagrangian solution.

In the following we will make use of the auxiliary variables
\begin{equation}
\label{eq:P}
P(t,m)=\varepsilon\overline{V}(m)+\overline{X}(m) +\int_0^tF[X(\cdot,r),Y(\cdot,r)](m)\,dr,\\
\end{equation}
and
\begin{equation}
\label{eq:Q}
Q(t,m)=\varepsilon\overline{W}(m)+\overline{Y}(m) +\int_0^t G[X(\cdot,r),Y(\cdot,r)](m)\,dr,
\end{equation}
that allow to rephrase  system \eqref{sisteps} in the equivalent form
\begin{equation}
\label{sistpq}
\begin{dcases}
\varepsilon \dot{X}+X+\partial I_\mathcal{K}(X)\ni P, \\ \varepsilon \dot{Y}+Y+\partial I_\mathcal{K}(Y)\ni Q, \\ \dot{P}=F[X,Y], \\ \dot{Q}=G[X,Y].
\end{dcases}
\end{equation}

\subsection{Main results}\label{subsec:mainres}
We collect in this subsection the main results presented in the paper. The first result concerns the well-posedness of system \eqref{sistema1} in the $2$-Wasserstein space of probability measures and in the sense of \emph{sticky solutions}, under smoothness assumptions on the interaction kernels.

\begin{teorema}
\label{main_th}
Let $T>0$ and suppose that the kernels $H_\rho, K_\rho, H_\eta, K_\eta\in C^1(\mathbb{R})$ satisfy \eqref{A} and \eqref{SL}. Let $\overline{\rho}, \overline{\eta} \in \mathcal{P}_2(\mathbb{R})$ and $\overline{v}\in L^2(d\overline{\rho})$ and  $\overline{w}\in L^2(d\overline{\eta})$. Then, there exists a unique quadruple $$(\rho, \eta, v, w) \in \emph{Lip}\big([0,T]; \mathcal{P}_2(\mathbb{R})\times\mathcal{P}_2(\mathbb{R})\times L^2(d\rho(t))\times L^2(d\eta(t))\big)$$ that is a distributional solutions to system \eqref{sistema1} 
%If $\overline{\rho}=\overline{X}\#\mathfrak{m},$ $\overline{\eta}=\overline{Y}\#\mathfrak{m},$ $\overline{V}=\overline{v}\circ\overline{X},$ $\overline{W}=\overline{w}\circ \overline{Y}$, then $(\rho, \eta, v, w)$ defined as in Theorem \ref{th1} is a distributional solution to \eqref{sistema1} 
such that
\begin{align*}
&\lim_{t\downarrow 0}\rho (t,\cdot )=\overline{\rho} \quad \textit{in} \quad \mathcal{P}_2(\mathbb{R}), \qquad &\lim_{t\downarrow 0}\rho (t,\cdot)v(t,\cdot )=\overline{\rho}\overline{v} \quad \textit{in}\quad \mathcal{M}(\mathbb{R}), \\ &\lim_{t\downarrow 0}\eta (t,\cdot )=\overline{\eta} \quad \textit{in} \quad \mathcal{P}_2(\mathbb{R}), \qquad &\lim_{t\downarrow 0}\eta (t,\cdot)w(t,\cdot )=\overline{\eta}\overline{w} \quad \textit{in}\quad \mathcal{M}(\mathbb{R}).
\end{align*}
\end{teorema}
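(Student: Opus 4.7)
The plan is to transfer the problem entirely to the Lagrangian Hilbert-space setting of Subsection \ref{subsec:lagrangiandescription} so that existence and uniqueness reduce to solving the coupled differential inclusion system \eqref{sisteps} (equivalently \eqref{sistpq}) in $L^{2}(\Omega)^{2}$, and then push the Lagrangian solution forward via the isometry $\Psi$ to obtain a distributional solution of \eqref{sistema1} in Wasserstein space. Setting $\bar X=\Psi(\bar\rho),\bar Y=\Psi(\bar\eta)\in\mathcal{K}$ and $\bar V=\bar v\circ\bar X,\bar W=\bar w\circ\bar Y\in L^{2}(\Omega)$ (the square integrability of $\bar V,\bar W$ being encoded in the hypothesis $\bar v\in L^{2}(d\bar\rho),\bar w\in L^{2}(d\bar\eta)$), I will produce a unique pair $(X,Y)\in\mathrm{Lip}_{\mathrm{loc}}([0,T];\mathcal{K})^{2}$ solving \eqref{sisteps} with these data, and then recover $(\rho(t),\eta(t))$ as the push-forwards of the Lebesgue measure on $\Omega$ through $X(t,\cdot),Y(t,\cdot)$, and the Eulerian velocities through $V=\partial_{t}X,W=\partial_{t}Y$.

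The existence step follows a Banach fixed-point argument in the spirit of Brezis \cite{brezis} as adapted to sticky dynamics in \cite{gangbo}. Fix $T_{0}>0$ to be chosen small and consider the complete metric space $\mathcal{X}_{T_{0}}$ of pairs $(\tilde X,\tilde Y)\in C([0,T_{0}];L^{2}(\Omega))^{2}$ satisfying $\tilde X(0)=\bar X,\tilde Y(0)=\bar Y$ and $\tilde X(t),\tilde Y(t)\in\mathcal{K}$, equipped with the sup-in-time $L^{2}$ distance. Given $(\tilde X,\tilde Y)$, define $P,Q$ by \eqref{eq:P}--\eqref{eq:Q} using the frozen arguments $F[\tilde X,\tilde Y],G[\tilde X,\tilde Y]$; assumptions \eqref{A} and \eqref{SL} make $F,G$ bounded and uniformly continuous, so $P,Q\in W^{1,\infty}([0,T_{0}];L^{2}(\Omega))$. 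The operator $X\mapsto\varepsilon^{-1}\bigl(X+\partial I_{\mathcal{K}}(X)\bigr)$ is maximal monotone on $L^{2}(\Omega)$ with closed convex domain $\mathcal{K}$, so Brezis' theorem yields a unique strong solution $X\in\mathrm{Lip}([0,T_{0}];L^{2}(\Omega))\cap C([0,T_{0}];\mathcal{K})$ to the scalar inclusion $\varepsilon\dot X+X+\partial I_{\mathcal{K}}(X)\ni P(t)$ with datum $\bar X$; an identical construction produces $Y$. This defines $\Phi(\tilde X,\tilde Y)=(X,Y)\in\mathcal{X}_{T_{0}}$.

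The core estimate is contractivity on short intervals. Subtracting the inclusion for two driving data and testing against $X_{1}-X_{2}$ annihilates the $\partial I_{\mathcal{K}}$ contributions by monotonicity (equivalently, the resolvent of $\partial I_{\mathcal{K}}$ is $1$-Lipschitz), and Grönwall yields
\[
\sup_{[0,T_{0}]}\|X_{1}-X_{2}\|_{L^{2}}\le C\sup_{[0,T_{0}]}\|P_{1}-P_{2}\|_{L^{2}}.
\]
Combining with the analogous bound for $Y$ and with $\|P_{1}-P_{2}\|_{L^{2}}+\|Q_{1}-Q_{2}\|_{L^{2}}\le Ct\,\omega\bigl(\|(\tilde X_{1},\tilde Y_{1})-(\tilde X_{2},\tilde Y_{2})\|_{\mathcal{X}_{T_{0}}}\bigr)$, which comes from the uniform continuity of $F,G$, gives $\|\Phi(\tilde X_{1},\tilde Y_{1})-\Phi(\tilde X_{2},\tilde Y_{2})\|_{\mathcal{X}_{T_{0}}}\le CT_{0}\,\omega(\,\cdot\,)$, a contraction for $T_{0}$ small. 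A priori bounds obtained by testing against $X$ itself, together with the linear growth \eqref{SL}, produce global-in-time $L^{2}$ control on $X,Y$ and allow iteration up to the prescribed $T$. Uniqueness at the Lagrangian level follows from the very same computation applied to two fixed points with identical initial data.

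The principal obstacle is the identification and continuity of the velocities across collisions, together with verifying that the Lagrangian pair genuinely solves \eqref{sistema1} in the distributional sense. Since $X$ is only Lipschitz in time, its right derivative $V=d^{+}X/dt$ exists a.e.\ and jumps at collision times; the correct post-collision velocity must be the orthogonal projection onto $\mathcal{H}_{X(t)}$, and one has to verify the representation $V(t)=\mathsf{P}_{\mathcal{H}_{X(t)}}\bigl(\varepsilon^{-1}(P(t)-X(t))\bigr)$, which is precisely what the $\partial I_{\mathcal{K}}$ inclusion enforces. Once this is in place, setting $\rho(t)=X(t,\cdot)\#\mathfrak{m}$ and defining $v$ so that $v(t,X(t,m))=V(t,m)$ (well posed because $V\in\mathcal{H}_{X(t)}$), together with the analogous constructions for $\eta,w$, and changing variables in the weak formulation of \eqref{sistema1} through $\Psi$, reduces the PDE identities to the Lagrangian inclusion one has already solved; the initial traces are then obtained from the continuity of $X,Y,V,W$ in $L^{2}(\Omega)$ at $t=0$.
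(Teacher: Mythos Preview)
Your proposal follows the same route as the paper: pass to Lagrangian coordinates via $\Psi$, solve the differential inclusion \eqref{sisteps}/\eqref{sistpq} using the maximal monotonicity of $\partial I_{\mathcal{K}}$ together with Brezis' perturbation theory, identify $V(t)=\mathsf{P}_{\mathcal{H}_{X(t)}}\bigl(\varepsilon^{-1}(P(t)-X(t))\bigr)$ (the paper's Proposition~\ref{th:property}), and push forward to verify the distributional formulation and the initial traces. Two points of divergence are worth flagging.

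First, the paper does not run a short-time contraction and then concatenate: it works in the augmented variable $U=(X,Y,P,Q)$ on the Hilbert space $L^2(\Omega)^4$, treats the right-hand side $R(U)=(P/\varepsilon,Q/\varepsilon,F[X,Y],G[X,Y])$ as a Lipschitz perturbation of the maximal monotone operator $\partial\mathcal{A}$, and runs a Picard iteration on the full interval $[0,T]$, obtaining $\|U_{n+1}-U_n\|_{L^2}\le\tfrac{(Ct)^n}{n!}\|U_1-U_0\|_{L^2}$ directly (Proposition~\ref{th1}). This avoids your restart step and the a-priori bounds you invoke to glue short intervals; in particular the paper never needs to re-initialise the velocity at an intermediate time.

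Second, and more substantively, your contraction estimate as written does not close: invoking only a modulus of continuity $\omega$ for $F,G$ gives $\|\Phi(\cdot)-\Phi(\cdot)\|\le CT_0\,\omega(\|\cdot\|)$, which is \emph{not} a Banach contraction when $\omega(r)/r\to\infty$ as $r\downarrow 0$ (e.g.\ $\omega(r)=\sqrt{r}$). What both your fixed-point argument and the paper's iteration actually require is the genuine Lipschitz bound of Lemma~\ref{lemma2},
\[
\|F[X_1,Y_1]-F[X_2,Y_2]\|_{L^2}^2\le C\bigl(\|X_1-X_2\|_{L^2}^2+\|Y_1-Y_2\|_{L^2}^2\bigr),
\]
and similarly for $G$. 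Once you replace $\omega$ by this Lipschitz constant, your scheme is equivalent to the paper's and the remainder of your outline (projection formula for $V,W$, change of variables to the weak formulation, initial traces from $L^2$-continuity at $t=0$) matches the paper's proof of Theorem~\ref{main_th} essentially verbatim.
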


We then address the $\sigma\to \infty$ limit of \eqref{sistema1} towards \eqref{sistema2} using the rescaling in \eqref{eulerresc}, making rigorous the formal argument presented in Section 2.4. This task is performed at the level of the Lagrangian system \eqref{sisteps} sending the parameter $\varepsilon=\sigma^{-2}\to 0$, coming back to the Eulerian description through the isometry \eqref{isometry}. The following result is proved in Section \ref{sec:large_damping}.

\begin{teorema}\label{th:large_damping}
Let $T>0$ and suppose that the kernels $H_\rho, K_\rho, H_\eta, K_\eta\in C^1(\mathbb{R})$ satisfy \eqref{A} and \eqref{SL}. Let $(\rho_\varepsilon,\eta_\varepsilon,v_\varepsilon,w_\varepsilon)$ be  solution to  system \eqref{eulerresc} with $\varepsilon=\sigma^{-2}$ under the initial condition $(\overline{\rho}_\varepsilon,  \overline{\eta}_\varepsilon, \overline{v}_\varepsilon, \overline{w}_\varepsilon)$ and let $(\rho,\eta)$ be  solution to  system \eqref{systlimit} with initial data $(\overline{\rho},\overline{\eta}).$
 Furthermore,  assume that
 \begin{itemize}
     \item[(i)] $\overline{\rho}_\varepsilon\to \overline{\rho}$ and $\overline{\eta}_\varepsilon \to \overline{\eta}$ as $\varepsilon\to 0$ in  $\mathcal{P}_2(\mathbb{R})$;
     \item[(ii)]  $\overline{v}_\varepsilon=o(1/\varepsilon )$ in $L^2(d\overline{\rho}_\varepsilon )$ and $\overline{w}_\varepsilon=o(1/\varepsilon )$ in $L^2(d\overline{\eta}_\varepsilon )$ as $\varepsilon\to 0$.
 \end{itemize}
 Then,
 \begin{equation*}
     \lim _{\varepsilon\to 0} \int_0^T \mathcal{W}_2^2\big( (\rho_\varepsilon,\eta_\varepsilon),(\rho,\eta) \big)\,dt=0.
 \end{equation*}
 \end{teorema}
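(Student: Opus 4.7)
The plan is to work entirely in the Lagrangian picture via the isometry \eqref{isometry}, so that
\[
\int_0^T \mathcal{W}_2^2\big((\rho_\varepsilon,\eta_\varepsilon),(\rho,\eta)\big)\,dt=\int_0^T \mathcal{N}(t)\,dt,\qquad \mathcal{N}(t):=\|X_\varepsilon(t)-X(t)\|_{L^2(\Omega)}^2+\|Y_\varepsilon(t)-Y(t)\|_{L^2(\Omega)}^2,
\]
where $(X_\varepsilon,Y_\varepsilon)$ are the Lagrangian solutions of \eqref{sisteps} and $(X,Y)$ are the pseudo-inverses of the first-order limit $(\rho,\eta)$. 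The crucial preparatory step is to rewrite the first-order limit as a differential inclusion of the same structural form as \eqref{sisteps} with $\varepsilon=0$: using the Lagrangian formulation $\dot X+\partial I_{\mathcal{K}}(X)\ni F[X,Y]$ of \eqref{systlimit} (cf.\ \cite{difrafag}) and the monotonicity of the normal cones $N_{X(s)}\mathcal{K}\subset N_{X(t)}\mathcal{K}$ for $s<t$ under globally sticky dynamics, one obtains
\[
X(t)+\partial I_{\mathcal{K}}(X(t))\ni P(t):=\overline X+\int_0^t F[X,Y]\,ds,
\]
and analogously for $Y$ with $Q(t)=\overline Y+\int_0^t G[X,Y]\,ds$.

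The core estimate is a monotonicity computation in $L^2(\Omega)$. Subtracting the inclusion for $X$ from \eqref{sisteps} for $X_\varepsilon$, pairing with $X_\varepsilon-X$, and discarding the non-negative sub-differential contribution yields
\[
\|X_\varepsilon-X\|^2+\varepsilon\langle\dot X_\varepsilon,X_\varepsilon-X\rangle\le \langle P_\varepsilon-P,X_\varepsilon-X\rangle.
\]
Writing $\varepsilon\langle\dot X_\varepsilon,X_\varepsilon-X\rangle=\tfrac{\varepsilon}{2}\tfrac{d}{dt}\|X_\varepsilon-X\|^2+\varepsilon\langle\dot X,X_\varepsilon-X\rangle$ and applying Young's inequality, the analogue for $Y$ sums to give
\[
\tfrac{1}{2}\mathcal{N}(t)+\tfrac{\varepsilon}{2}\mathcal{N}'(t)\le \|P_\varepsilon-P\|^2+\|Q_\varepsilon-Q\|^2+\varepsilon^2\big(\|\dot X\|^2+\|\dot Y\|^2\big).
\]
Integrating on $[0,T]$ absorbs the $\tfrac{\varepsilon}{2}\mathcal{N}'$ term against the initial datum, leaving a bound on $\int_0^T\mathcal{N}\,dt$ in terms of (a)~$\varepsilon\mathcal{N}(0)$, (b)~the $L^2_t$-norm of the differences $P_\varepsilon-P,\,Q_\varepsilon-Q$, and (c)~$\varepsilon^2\int_0^T(\|\dot X\|^2+\|\dot Y\|^2)\,dt$. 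Expanding (b) via \eqref{eq:P}, \eqref{eq:Q} produces the initial-velocity term $\varepsilon^2\|\overline V_\varepsilon\|^2$ (vanishing by assumption (ii)), the initial-data term $\|\overline X_\varepsilon-\overline X\|^2$ (vanishing by assumption (i) combined with \eqref{isometry}), and an iterated-in-time integral of $\|F[X_\varepsilon,Y_\varepsilon]-F[X,Y]\|^2$. The term (c) is $O(\varepsilon^2)$, since $\|\dot X\|\le \|F[X,Y]\|$ by the contraction property of $\mathsf{P}_{\mathcal{H}_X}$ (Lemma \ref{lemmacontraction}) and $F$ is uniformly bounded on $[0,T]$ by \eqref{SL}. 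A Gronwall argument then closes the loop and yields $\int_0^T\mathcal{N}\,dt\to 0$.

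The main obstacle is the closure of the Gronwall loop: the hypothesis \eqref{SL} guarantees only the uniform continuity of $F,G$ in the sense of Definition \ref{def:bounded_unif_cont} and not their Lipschitz continuity, so one must rely on an Osgood-type argument based on the modulus $\omega$ rather than on classical Gronwall. A secondary delicate point is the derivation of the inclusion $X+\partial I_{\mathcal{K}}(X)\ni P$ for the first-order limit, which is not just the naive $\varepsilon\to 0$ limit of \eqref{sisteps} but relies essentially on the monotonicity of the normal-cone family $\{N_{X(t)}\mathcal{K}\}_t$ under global stickiness to ensure that the cumulative sub-differential contribution $\int_0^t \xi(s)\,ds$ lies in $\partial I_{\mathcal{K}}(X(t))$.
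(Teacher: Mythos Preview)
Your approach matches the paper's almost exactly: both pass to Lagrangian variables, write the first-order limit as the inclusion $Z_0+\partial I_{\mathcal{K}^2}(Z_0)\ni\overline Z_0+\int_0^t L(Z_0)\,dr$, subtract from \eqref{eq:inclusionepsilon} (the paper first adds $\varepsilon\dot Z_0$ to both sides of the limit inclusion, which is algebraically equivalent to your expansion of $\varepsilon\langle\dot X_\varepsilon,X_\varepsilon-X\rangle$), exploit the monotonicity of $\partial I_{\mathcal{K}}$ together with Young's inequality, and close with Gronwall after checking that the residual terms vanish by assumptions (i)--(ii). Your two closing worries are unnecessary within the paper's framework: Lemma~\ref{lemma2} supplies genuine Lipschitz (not merely uniformly continuous) bounds on $F,G$---its proof uses the Lipschitz constants $L(K_\rho')$, $L(H_\rho')$, etc., so a $C^{1,1}$-type hypothesis on the kernels is tacitly in force---hence classical Gronwall applies without any Osgood refinement, and the first-order inclusion is simply asserted as \eqref{eq:inclusionlimit} with no further justification beyond the Lagrangian theory of \cite{difrafag}.
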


\begin{rmk}[Initial data are not \emph{well-prepared} in the velocity variable]  In Theorem \ref{th:large_damping}, recalling that $\overline{v}_\varepsilon=\frac{1}{\sqrt{\varepsilon}}\overline{v}$, assumption $(ii)$ is satisfied in case $\overline{v}\in L^2(d\rho)$ and $\overline{w}\in L^2(d\eta)$ are given and independent of $\varepsilon$. Therefore, assumption (ii) is quite general in the context of singular limits. Assumption (i) instead imposes that the initial density should converge to the one of the limiting first order system.
\end{rmk}

Lastly, under the action of Newtonian self-interaction kernels, $K_\rho(x)=K_\eta(x)=N(x)\coloneqq |x|$, \emph{symmetric} and attractive cross-interactions, $H_\rho(x)=H_\eta(x)=H(x)$  and suitably coercive external potentials, we focus on a different aspect, that is the convergence to stationary solutions of \eqref{sistema1}. More precisely, we will consider the following system
\begin{equation} 
\label{sistema1_New}
\begin{dcases} 
\frac{\partial \rho}{\partial t} +\frac{\partial}{\partial x}(\rho v)=0, \\ \frac{\partial \eta}{\partial t}+\frac{\partial}{\partial x} (\eta w)=0, \\ \frac{\partial}{\partial t} (\rho v)+\frac{\partial}{\partial x}(\rho v^2)=-\sigma\rho v -\rho [N'\ast \rho +H'\ast \eta+A_\rho], \\ \frac{\partial}{\partial t} (\eta w)+\frac{\partial }{\partial x} (\eta w^2)=-\sigma\eta w -\eta [N'\ast\eta +H'\ast\rho+A_\eta ],
\end{dcases}
\end{equation}
and its Lagrangian counterpart
\begin{equation}
\label{systempseudoi_intro}
\begin{dcases}
\partial _t X(t,m)=V(t,m), \\ \partial _t Y(t,m)=W(t,m), \\ \partial _t V(t,m)=-\int_\Omega \text{sign}\big(X(t,m)-X(t,m')\big)\,dm' \\ \hspace{1.9cm} -\int_\Omega H'\big(X(t,m)-Y(t,m')\big)\,dm'-\sigma V(t,m) -A_\rho'(X),\\ \partial _t W(t,m)=-\int_\Omega \text{sign}\big(Y(t,m)-Y(t,m')\big) \,dm' \\ \hspace{2cm} -\int_\Omega H'\big(Y(t,m)-X(t,m')\big)\,dm'-\sigma W(t,m) -A_\eta'(Y).
\end{dcases}
\end{equation}
Stationary solutions in this case are $(\rho_s,\eta_s)=(\delta_0,\delta_0)$ where $\delta$ is the Dirac measure, which corresponds to $(X_s,Y_s)=(0,0)$ in terms of the Lagrangian description. 
The last result we present in the paper shows that solutions to \eqref{sistema1_New} converge to the stable stationary solution in the $2-$Wasserstein distance. 

\begin{teorema}\label{th:newtonian}
Let $H$ be an interaction potential under assumptions \eqref{A}, \eqref{SL} and \eqref{XX}. Consider $A_\rho\,,A_\eta\in C^2(\mathbb{R})$ under assumptions \eqref{H1} and \eqref{H2}. Let $(X,Y)\in\emph{Lip}_{\emph{loc}}([0,\infty);\mathcal{K})^2$  be a generalised Lagrangian solution to  \eqref{systempseudoi_intro} in the sense of Definition \ref{def:generalised}. Assume that the initial positions $(\overline{X},\overline{Y})\in\mathcal{K}^2$ and velocities $(\overline{V},\overline{W})\in\left(L^2(\Omega)\right)^2$ satisfy 
\[\norma{\overline{X}}_{L^2}+\norma{\overline{Y}}_{L^2}+\norma{\overline{V}}_{L^2}+\norma{\overline{W}}_{L^2}<\infty,\] then
\[\lim _{t\to\infty}\bigg( \norma{X}_{L^2}+\norma{Y}_{L^2}+\norma{V}_{L^2}+\norma{W}_{L^2} \bigg)=0. \]
Furthermore calling  $\rho (t,\cdot)\coloneqq \Psi^{-1}(X(t,\cdot))$ and $\eta (t,\cdot)\coloneqq \Psi^{-1}(Y(t,\cdot))$, where $\Psi$ is the isometry defined in \eqref{isometry}, we have
\[ \lim_{t\to \infty} \mathcal{W}_2^2\big( (\rho,  \eta),(\rho_s,\eta_s) \big)=0. \]
\end{teorema}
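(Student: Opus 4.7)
The plan is to prove Theorem \ref{th:newtonian} via an energy/Lyapunov functional argument in the Lagrangian setting, combining the dissipation induced by the linear damping with the coercivity of the external potentials from \eqref{H1}--\eqref{H2} and the attractive character \eqref{XX} of the symmetric cross-interaction. I set up the kinetic energy $K(t) := \tfrac12(\norma{V}_{L^2}^2+\norma{W}_{L^2}^2)$, the second moment $\phi(t) := \tfrac12(\norma{X}_{L^2}^2+\norma{Y}_{L^2}^2)$, and the total potential
\[
\mathcal{V}(X,Y) := \tfrac12\iint |X(m)-X(m')|\,dm\,dm' + \tfrac12\iint |Y(m)-Y(m')|\,dm\,dm' + \iint H(X-Y)\,dm\,dm' + \int A_\rho(X) + \int A_\eta(Y).
\]
A standard chain-rule computation on generalised Lagrangian solutions --- valid because the sticky condition forces $X(t),V(t)\in H_{X(t)}$, so the projection $\Theta$ of \eqref{eq:theta} is invisible when tested against $X$ or $V$ --- gives the dissipation identity $\tfrac{d}{dt}[K+\mathcal{V}] = -2\sigma K$. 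Integrating \eqref{H2} yields $A(x)-A(0)\geq\tfrac{\alpha}{2}x^2$, so, setting $\tilde E := K+\mathcal{V}-\mathcal{V}(0,0)$ with $\mathcal{V}(0,0)=A_\rho(0)+A_\eta(0)\geq 0$, one obtains $\tilde E(t)\geq K(t)+\alpha\phi(t)\geq 0$ and $\tilde E$ is non-increasing.

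The second step is a \emph{second moment estimate}. Differentiating $\phi'(t)=\langle X,V\rangle+\langle Y,W\rangle$ gives $\phi''(t)=2K(t)+\langle X,\dot V\rangle+\langle Y,\dot W\rangle$; the self-interaction contribution is non-positive after antisymmetrization (because $x\,\mathrm{sign}(x)\geq 0$), the sum of the two cross-interaction contributions collapses, via oddness of $H'$, to $-\iint(X-Y)H'(X-Y)\,dm\,dm'\leq 0$ by \eqref{XX}, the external contribution is bounded by $-2\alpha\phi$ thanks to \eqref{H2}, and the damping term produces $-\sigma\phi'$. This yields the damped-oscillator-type inequality
\[
\phi''(t)+\sigma\phi'(t)+2\alpha\phi(t)\leq 2K(t).
\]
Combining this with the energy dissipation, I introduce the Lyapunov functional $F(t):=\tilde E(t)+\varepsilon\phi'(t)+\varepsilon\sigma\phi(t)$ with $\varepsilon\in(0,\sigma)$ small, for which a direct computation gives $F'(t)\leq -2(\sigma-\varepsilon)K(t)-2\varepsilon\alpha\phi(t)$, while Cauchy--Schwarz $|\phi'|\leq 2\sqrt{K\phi}$ together with weighted Young and $\tilde E\geq K+\alpha\phi$ ensure $F(t)\geq c_0(K(t)+\phi(t))\geq 0$ for $\varepsilon$ sufficiently small. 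Integration on $[0,\infty)$ then gives $\int_0^\infty(K(s)+\phi(s))\,ds<\infty$.

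Since $\phi$ has bounded derivative (both $\norma{X}_{L^2}$ and $\norma{V}_{L^2}$ are uniformly bounded by the energy estimate), the combination of $L^1$-integrability and Lipschitz regularity forces $\phi(t)\to 0$, hence $\norma{X(t)}_{L^2},\norma{Y(t)}_{L^2}\to 0$; the bound $|\phi'|\leq 2\sqrt{K\phi}$ then implies $\phi'(t)\to 0$. From $F(t)\to F_\infty$ I deduce $K+[\mathcal{V}-\mathcal{V}(0,0)]\to F_\infty$, and a continuity argument for $\mathcal{V}$ at $(0,0)$ in $L^2$ --- straightforward for the self- and cross-interaction terms via $H(0)=0$ and the sub-quadratic growth from \eqref{SL}, while the external term requires combining the uniform bound $\int[A_\rho(X)+A_\eta(Y)]\leq E(0)$ with $X,Y\to 0$ in $L^2$ through a truncation / uniform-integrability argument --- yields $\mathcal{V}\to\mathcal{V}(0,0)$, whence $K(t)\to F_\infty$. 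Combined with $K\in L^1$ this forces $F_\infty=0$ and therefore $K(t)\to 0$, i.e.\ $\norma{V}_{L^2},\norma{W}_{L^2}\to 0$. The Wasserstein conclusion $\mathcal{W}_2^2((\rho,\eta),(\delta_0,\delta_0))\to 0$ then follows immediately from the isometry \eqref{isometry}, since $W_2^2(\rho(t),\delta_0)=\norma{X(t)}_{L^2}^2$ and analogously for $\eta$. I expect the hardest technical step to be the last continuity argument for the external potential under only \eqref{H1}--\eqref{H2} (which does not exclude super-quadratic growth of $A_\rho$), together with the rigorous justification of the chain-rule for the signed self-interaction in the generalised sticky framework, to be handled via Lemma \ref{lemmacontraction} and the projection formulas \eqref{def:projx}--\eqref{def:projy}.
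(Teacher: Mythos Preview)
Your proposal follows essentially the same route as the paper's proof: the energy dissipation identity (the Lemma immediately preceding the proof), the second-moment inequality obtained by testing the momentum equations against $X,Y$ and exploiting \eqref{XX} and \eqref{H2}, a combination yielding $\int_0^\infty(K+\phi)\,dt<\infty$, and finally a monotonicity/continuity argument to identify the limit. Your Lyapunov functional $F=\tilde E+\varepsilon\phi'+\varepsilon\sigma\phi$ is a tidier hypocoercivity-style repackaging of what the paper gets by directly estimating $\frac{d}{dt}\int(|X|^2+|Y|^2+XV+YW)$, and your Lipschitz-plus-$L^1$ argument for $\phi(t)\to 0$ replaces the paper's subsequence extraction, but the substance is identical; the continuity concern for $\int A_\rho(X)$ under only \eqref{H1}--\eqref{H2} that you rightly flag is handled in the paper by simply asserting continuity of $\mathfrak{F}$ (with a reference to Lemma~\ref{lemma2}), so it applies equally to both arguments.
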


\section{Existence and uniqueness for smooth potentials}\label{sec:existence}

In this section we prove Theorem \ref{main_th}, namely existence and uniqueness of solution to system \eqref{sistema1}. To perform this task, we pass through existence of solutions to the Lagrangian system \eqref{sisteps}, where we apply the theory of Maximal Monotone Operators subject to Lipschitz perturbations in the spirit of \cite[Theorem 3.17]{brezis}, see Proposition \ref{th1} below. The result in the original variables is then proved using the properties contained in Proposition \ref{th:property} below.

%Before proceeding with the proof, we recall a definition by \cite{brezis} and prove an intermediate lemma.

%\begin{definition} [Weak solution to the differential inclusion $\frac{du}{dt}+Au\ni f, u(0)=u_0$] Let $A$ be an operator on an Hilbert space $H$ and let $f\in L^1 \big((0,T);H\big)$. A function $u\in C\big([0,T]; H\big)$ is a weak solution to the equation $\frac{du}{dt}+Au\ni f$ if there exist the sequences $f_n\in L^1\big( (0,T); H\big)$ and $u_n\in C\big( [0,T];H\big)$ such that $u_n$ is a strong solution to the differential inclusion $\frac{du_n}{dt}+Au_n\ni f_n$, $f_n\to f$ in $L^1\big( (0,T); H\big)$ and $u_n\to u$ uniformly in $[0,T].$
%\end{definition}
%\begin{teorema}
%\label{thesun}
%Let $T>0$ and suppose that the potentials $H_\rho, K_\rho, H_\eta, K_\eta$ are $C^1(\mathbb{R})$, admissible and satisfy \eqref{SL}. Then, for every $(\overline{X},\overline{Y},\overline{V},\overline{W})\in\mathcal{K}^2\times L^2(0,1)^2$ there exists a unique Lagrangian solution $(X,Y)$ to \eqref{sisteps} in $[0,T].$ Moreover, if $Z_1\coloneqq (X_1,Y_1)$ and $Z_2\coloneqq (X_2,Y_2)$ are two solutions to the system \eqref{sisteps}, then there exists a constant $C_T$ such that
%\begin{equation}
%\label{stability}
%\norma{Z_1(t)-Z_2(t)}_{L^2\times L^2}\leq C_T\norma{\overline{Z}_1-\overline{Z}_2}_{L^2\times L^2}
%\end{equation}
%for all $t\in [0,T].$
%\end{teorema}

We start proving the following Lemma.
\begin{lemma} \label{lemma2}
Let $(X,Y)$,$(\widetilde{X},\widetilde{Y}) \in\mathcal{K}\times\mathcal{K}$ be given. Consider the interaction kernels $H_\rho, K_\rho, H_\eta, K_\eta$ under assumptions \eqref{A} and \eqref{SL} and let $F$ and $G$ be the operators defined in \eqref{eq:F} and \eqref{eq:G} respectively. Then there exist two positive constants $C_1$ and $C_2$ depending on the Lipschitz constants of the kernels, such that 
\begin{enumerate} [label=(\roman{*}), ref=(\roman{*})]
    \item $ \norma{F[X,Y]-F[\widetilde{X},\widetilde{Y}]}^2_{L^2(0,1)}\leq C_1 \big( \norma{X-\widetilde{X}}^2_{L^2(0,1)}+\norma{Y-\widetilde{Y}}^2_{L^2(0,1)} \big), $
    \item $ \norma{G[X,Y]-G[\widetilde{X},\widetilde{Y}]}^2_{L^2(0,1)}\leq C_2 \big( \norma{X-\widetilde{X}}^2_{L^2(0,1)}+\norma{Y-\widetilde{Y}}^2_{L^2(0,1)} \big). $
\end{enumerate}
\end{lemma}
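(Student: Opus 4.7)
The plan is to prove item (i); item (ii) follows by the same argument with the roles of $\rho,\eta$ and $X,Y$ exchanged. I would first split
$F[X,Y](m)-F[\widetilde X,\widetilde Y](m)$
into its self-interaction piece (the one containing $K_\rho'$) and its cross-interaction piece (the one containing $H_\rho'$), so that, by the elementary inequality $(a+b)^2\le 2a^2+2b^2$, the squared $L^2$-norm on the left-hand side is controlled by twice the sum of the squared $L^2$-norms of these two pieces.

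The pointwise estimates would come from the Lipschitz continuity of $K_\rho'$ and $H_\rho'$ (this is what the statement refers to as the \emph{Lipschitz constants of the kernels}, which is slightly stronger than what \eqref{A}--\eqref{SL} literally give, but is implicit here and standard for stability estimates of this kind). Concretely, for the self-interaction piece I would use
\begin{equation*}
\bigl|K_\rho'(X(m)-X(m'))-K_\rho'(\widetilde X(m)-\widetilde X(m'))\bigr|\le L_{K_\rho}\bigl(|X(m)-\widetilde X(m)|+|X(m')-\widetilde X(m')|\bigr),
\end{equation*}
together with the analogous bound for the cross-interaction piece, in which $X(m')-\widetilde X(m')$ is replaced by $Y(m')-\widetilde Y(m')$.

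Then I would square the $m'$-integral via Jensen's inequality, using crucially that $|\Omega|=1$, integrate the result in $m$ via Fubini, and apply $(a+b)^2\le 2a^2+2b^2$ one more time. The term carrying $|X(m)-\widetilde X(m)|^2$ integrates directly to $\|X-\widetilde X\|_{L^2}^2$, while the term carrying $|X(m')-\widetilde X(m')|^2$ (respectively $|Y(m')-\widetilde Y(m')|^2$) yields $\|X-\widetilde X\|_{L^2}^2$ (respectively $\|Y-\widetilde Y\|_{L^2}^2$) after swapping the order of integration. Collecting constants gives $C_1=c(L_{K_\rho}^2+L_{H_\rho}^2)$ for an absolute $c>0$, and the estimate for $G$ is obtained identically, producing $C_2=c(L_{K_\eta}^2+L_{H_\eta}^2)$. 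I do not foresee any real obstacle: the only care point is the Lipschitz regularity of the kernels' first derivatives, after which the argument is a routine double-integration estimate.
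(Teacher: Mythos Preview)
Your proposal is correct and mirrors the paper's proof essentially step for step: split into the $K_\rho'$-piece and the $H_\rho'$-piece via $(a+b)^2\le 2a^2+2b^2$, apply the Lipschitz bound on the derivatives pointwise, then Jensen (using $|\Omega|=1$) and Fubini to collect the $L^2$ norms. Your remark that the Lipschitz continuity of $K_\rho'$, $H_\rho'$ is a bit stronger than what \eqref{A}--\eqref{SL} literally provide is accurate and worth noting; the paper's own proof invokes the same Lipschitz constants without further comment.
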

\begin{proof}
We only prove $(i)$ since $(ii)$ follows from a similar argument. By the definition of $F$ in \eqref{eq:F} we have
\begin{align}\label{eq:lemma_proof1}
\begin{aligned}
 &\norma{ F[X,Y]-F[\widetilde{X},\widetilde{Y}]}^2_{L^2(0,1)} \\ &
= \int_\Omega \abss{-\int_\Omega K_\rho'\big( X(r,m)-X(r,m')\big)\,dm'-\int_\Omega H_\rho'\big( X(r,m)-Y(r,m')\big)\,dm' \\ & +\int_\Omega K_\rho'\big( \widetilde{X}(r,m)-\widetilde{X}(r,m')\big)\,dm'+\int_\Omega H_\rho'\big( \widetilde{X}(r,m)-\widetilde{Y}(r,m')\big)\,dm'} ^2 \,dm. 
\end{aligned}
\end{align}
Using the fact that $\abs{x+y}^2\leq 2(\abs{x}^2+\abs{y}^2)$, the right hand side of \eqref{eq:lemma_proof1} can be controlled by 
\begin{align}
\begin{aligned}\label{eq:lemma_proof2}
& 2 \int_\Omega \bigg(  \abss{ \int_\Omega \big[ K_\rho' \big(X(r,m)-X(r,m')\big)-K_\rho'(\widetilde{X}(r,m)-\widetilde{X}(r,m')\big)\big]\,dm'}^2  \\  &+\abss{ \int_\Omega \big[ H_\rho'\big( X(r,m)-Y(r,m')\big)-H_\rho'\big(\widetilde{X}(r,m)-\widetilde{Y}(r,m')\big)\big]\,dm'}^2 \bigg)\,dm  \\ & 
\leq 2 \int_\Omega \bigg( \int_\Omega \abs{ K_\rho' \big(X(r,m)-X(r,m')\big)-K_\rho'(\widetilde{X}(r,m)-\widetilde{X}(r,m')\big)}\,dm'\bigg)^2  \\ &+\bigg( \int_\Omega \abs{ H_\rho' \big(X(r,m)-Y(r,m')\big)-H_\rho'(\widetilde{X}(r,m)-\widetilde{Y}(r,m')\big)}\,dm'\bigg)^2\,dm. 
\end{aligned}
\end{align}
Let $L(K_\rho')$ and $L(H_\rho')$ be the Lipschitz constants of $K_\rho'$ and $H_\rho'$ respectively, then, using Jensen's inequality, the right hand side of \eqref{eq:lemma_proof2} is bounded by
\begin{align*}
& 2\int_\Omega \bigg( \int_\Omega L(K_\rho')\big( \abs{X(r,m)-\widetilde{X}(r,m)}+\abs{X(r,m')-\widetilde{X}(r,m')}\big)\,dm'\bigg)^2 \\&
+ \bigg( \int_\Omega L(H_\rho') \big( \abs{X(r,m)-\widetilde{X}(r,m)} + \abs{Y(r,m')-\widetilde{Y}(r,m')} \big)\,dm' \bigg)^2\,dm \\
& \leq 4 \int_\Omega \bigg( \int_\Omega \big[ L(K_\rho')^2\abs{X(r,m)-\widetilde{X}(r,m)}^2
+L(K_\rho')^2\abs{X(r,m')-\widetilde{X}(r,m')}^2\big]\,dm' \bigg) \\ & 
+\bigg( \int_\Omega \big[ L(H_\rho')^2\abs{X(r,m)-\widetilde{X}(r,m)}^2
+L(H_\rho')^2\abs{Y(r,m')-\widetilde{Y}(r,m')}^2\big]\,dm' \bigg)\,dm.
\end{align*}
Thus, there exists a positive constant $C_1=C_1\big(L(K_\rho'),L(H_\rho')\big)$ such that
\[
\norma{F[X,Y]-F[\widetilde{X},\widetilde{Y}]}^2_{L^2(0,1)}\leq C_1 \big( \norma{X-\widetilde{X}}^2_{L^2(0,1)}+\norma{Y-\widetilde{Y}}^2_{L^2(0,1)} \big).
\]
Analogously, one can prove the inequality $(ii)$, we omit the details.
\end{proof}

We are now ready to state existence result for Lagrangian solution to system \eqref{sisteps}.

\begin{proposizione} \label{th1} Let $T>0$ and suppose that the kernels $H_\rho, K_\rho, H_\eta, K_\eta\in C^1(\mathbb{R})$ satisfy \eqref{A} and \eqref{SL}. Then, for every $(\overline{X},\overline{Y},\overline{V},\overline{W})\in\mathcal{K}^2\times L^2(0,1)^2$ there exists a unique Lagrangian solution $(X,Y)$ to \eqref{sisteps} in $[0,T].$ 
%Moreover, if $Z_1\coloneqq (X_1,Y_1)$ and $Z_2\coloneqq (X_2,Y_2)$ are two solutions to system \eqref{sisteps}, then there exists a constant $C_T$ depending on $T$ such that
%\begin{equation} \label{stability}
%\norma{Z_1(t)-Z_2(t)}_{L^2\times L^2}\leq C_T\norma{\overline{Z}_1-\overline{Z}_2}_{L^2\times L^2}
%\end{equation}
%for all $t\in [0,T].$
\end{proposizione}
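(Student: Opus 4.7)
The plan is to reformulate the Lagrangian system \eqref{sisteps} in the equivalent first-order form \eqref{sistpq} via the auxiliary variables $P,Q$ defined in \eqref{eq:P}--\eqref{eq:Q}, then cast the whole problem as an autonomous evolution inclusion on the Hilbert space $H=L^2(\Omega)^4$ that fits the framework of \cite[Theorem 3.17]{brezis}. Setting $U=(X,Y,P,Q)$ and writing \eqref{sistpq} as $\dot U + \mathbb{A}(U) \ni \mathbb{B}(U)$, the natural splitting is
\[
\mathbb{A}(U) = \tfrac{1}{\varepsilon}\bigl(X+\partial I_\mathcal{K}(X),\, Y+\partial I_\mathcal{K}(Y),\,0,\,0\bigr),\qquad
\mathbb{B}(U) = \bigl(\tfrac{1}{\varepsilon}P,\,\tfrac{1}{\varepsilon}Q,\,F[X,Y],\,G[X,Y]\bigr).
\]

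First I would verify that $\mathbb{A}$ is maximal monotone by identifying it as $\partial\Phi$, where
\[
\Phi(U)=\tfrac{1}{\varepsilon}\Bigl(\tfrac12\norma{X}_{L^2(\Omega)}^2 + I_\mathcal{K}(X)+\tfrac12\norma{Y}_{L^2(\Omega)}^2+I_\mathcal{K}(Y)\Bigr)
\]
is proper, convex and lower semicontinuous on $H$, with effective domain $\mathcal{K}\times\mathcal{K}\times L^2(\Omega)\times L^2(\Omega)$. Next I would check that $\mathbb{B}$ is globally Lipschitz on $H$: the block $(P,Q)\mapsto(P/\varepsilon,Q/\varepsilon)$ is linear with norm $1/\varepsilon$, while the Lipschitz character of $(X,Y)\mapsto(F[X,Y],G[X,Y])$ from $\mathcal{K}^2$ to $L^2(\Omega)^2$ is precisely Lemma \ref{lemma2}; by extending $F,G$ through the standard resolvent projection $\mathsf{P}_\mathcal{K}$ one obtains an operator Lipschitz on the full ambient space. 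Finally I would observe that the initial datum $U(0)=\bigl(\overline X,\overline Y,\varepsilon\overline V+\overline X,\varepsilon\overline W+\overline Y\bigr)$ belongs to $D(\mathbb{A})$, since $\overline X,\overline Y\in\mathcal{K}$ and $\overline X,\overline Y,\overline V,\overline W\in L^2(\Omega)$.

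Having assembled these three ingredients, \cite[Theorem 3.17]{brezis} produces a unique $U\in\mathrm{Lip}([0,T];H)$ with $U(t)\in D(\mathbb{A})$ for a.e.\ $t$ and satisfying $\dot U(t)+\mathbb{A}(U(t))\ni\mathbb{B}(U(t))$ a.e. Extracting the first two components recovers the required Lagrangian solution $(X,Y)\in\mathrm{Lip}([0,T];\mathcal{K})^2$ to \eqref{sisteps}, since integrating the equations $\dot P=F[X,Y]$, $\dot Q=G[X,Y]$ against the initial values $P(0)=\varepsilon\overline V+\overline X$, $Q(0)=\varepsilon\overline W+\overline Y$ reproduces exactly the right-hand sides in \eqref{sisteps}. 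Uniqueness at the level of $(X,Y)$ follows from uniqueness of $U$ in the abstract problem, which in turn is obtained by pairing the difference of two solutions with itself, using monotonicity of $\mathbb{A}$ to drop the subdifferential terms, the Lipschitz bound for $\mathbb{B}$ and Grönwall's lemma. The only genuine analytical input is Lemma \ref{lemma2}, already established; the remaining difficulty is purely bookkeeping, namely carefully matching the domains, the scaling by $1/\varepsilon$, and the identification $\mathbb{A}=\partial\Phi$ needed for maximal monotonicity.
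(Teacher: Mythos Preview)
Your proposal is correct and coincides with the paper's own argument: the paper also rewrites \eqref{sisteps} in the first-order form \eqref{sistpq}, identifies the convex functional $\mathcal{A}(X,Y,P,Q)=I_\mathcal{K}(X)+I_\mathcal{K}(Y)+\frac{|X|^2}{2\varepsilon}+\frac{|Y|^2}{2\varepsilon}$ (your $\Phi$ up to a factor), and applies the Lipschitz-perturbed maximal monotone framework of \cite[Theorem~3.17]{brezis}, invoking Lemma~\ref{lemma2} for the Lipschitz bound on $R(U)=(P/\varepsilon,Q/\varepsilon,F[X,Y],G[X,Y])$. The only cosmetic difference is that the paper spells out the Picard iteration underlying Br\'ezis's theorem rather than citing the statement directly, but the structure, the key lemma, and the concluding Gr\"onwall argument for uniqueness are identical to what you outline.
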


\begin{proof} According to the discussion in Section \ref{subsec:lagrangiandescription}, system \eqref{sisteps} can be rewritten in the following equivalent form
\begin{equation}
\label{exun}
\begin{dcases} \dot{X}+\partial\bigg( I_\mathcal{K}(X)+\frac{\abs{X}^2}{2\varepsilon}\bigg)\ni \frac{P}{\varepsilon}, \\ \dot{Y}+\partial\bigg( I_\mathcal{K}(Y)+\frac{\abs{Y}^2}{2\varepsilon}\bigg)\ni \frac{Q}{\varepsilon}, \\ \dot{P}=F[X,Y], \\ \dot{Q}=G[X,Y],
\end{dcases}
\end{equation}
where $P$ and $Q$ are definded in \eqref{eq:P} and \eqref{eq:Q} respectively. In order to prove the result we will follow the strategy in  \cite[Theorem 3.17]{brezis}. Consider  the  operator 
 \[ \mathcal{A} (X,Y,P,Q)\coloneqq I_\mathcal{K}(X)+I_\mathcal{K}(Y)+\frac{\abs{X}^2}{2\varepsilon}+\frac{\abs{Y}^2}{2\varepsilon}\]
 defined on the Hilbert space $H\coloneqq L^2(\Omega)^2\times L^2(\Omega)^2$. Note that $\mathcal{A}$ is convex and bounded from below. % and the following set-valued operator
% \[A(X,Y,P,Q)=\bigg(\partial \bigg(I_\mathcal{K}(X)+\frac{\abs{X}^2}{2\varepsilon}\bigg)-\frac{P}{\varepsilon}, \partial \bigg(I_\mathcal{K}(Y)+\frac{\abs{Y}^2}{2\varepsilon}\bigg)-\frac{Q}{\varepsilon}, F[X,Y], G[X,Y]\bigg),\] 
%is a Lipschitz perturbation of the subdifferential of $\mathcal{A}$. 
Consider the iterative sequence defined as follows: fix $U_0\coloneqq (\overline{X},\overline{Y},\overline{P},\overline{Q}) \equiv (\overline{X},\overline{Y},\varepsilon \overline{V}+\overline{X},\varepsilon\overline{W}+\overline{Y})$ and, for $n\geq1$ construct $U_{n+1}(t)\coloneqq (X_{n+1}(t),Y_{n+1}(t),P_{n+1}(t),Q_{n+1}(t))$ recursively as the weak solution to the implicit-explicit system
\begin{equation}
\label{existence}
\begin{dcases}
\dot{X}_{n+1}+\partial \bigg(I_\mathcal{K}(X_{n+1})+\frac{\abs{X_{n+1}}^2}{2\varepsilon}\bigg)\ni \frac{P_n}{\varepsilon}, \qquad &X_{n+1}(0)=\overline{X}, \\ \dot{Y}_{n+1}+\partial \bigg(I_\mathcal{K}(Y_{n+1})+\frac{\abs{Y_{n+1}}^2}{2\varepsilon}\bigg)\ni \frac{Q_n}{\varepsilon}, \qquad &Y_{n+1}(0)=\overline{Y}, \\ \dot{P}_{n+1}=F[X_n,Y_n], \qquad &P_{n+1}(0)=\overline{P}, \\ \dot{Q}_{n+1}=G[X_n,Y_n], \qquad &Q_{n+1}(0)=\overline{Q}.
\end{dcases}
\end{equation}
Setting $R(U_n)=\big(P_n /\varepsilon, Q_n/\varepsilon, F[X_n,Y_n], G[X_n,Y_n]\big),$ the previous system \eqref{existence} can be rewritten in the following compact form
\begin{equation}\label{eq:compact}
  \dot{U}_{n+1}+\partial\mathcal{A} (U_{n+1})\ni R(U_n).  
\end{equation}
Since the functional $\mathcal{A}$ is convex, its sub-differential is a maximal monotone operator in the sense of \cite{brezis} and $R$ can be seen as a Lipschitz perturbation of it, see \cite[Lemma 3.1]{brezis}. A direct computation shows that
\[
    \frac{1}{2}\frac{d}{dt} \norma{U_{n+1}-U_n}^2_{L^2(0,1)}\leq \big( U_{n+1}-U_n, R(U_n)-R(U_{n-1}) \big),
\]
then
%\[
%\frac{1}{2} \norma{U_{n+1}-U_n}^2_{L^2(0,1)}\leq\int_0^t \big( U_{n+1}-U_n, R(U_n)-R(U_{n-1}) \big)\,dr.
%\]
proceeding as in \cite[Lemma A.5]{brezis}, we have that
\[ \norma{U_{n+1}-U_n}_{L^2(0,1)}\leq \int_0^t \norma{R(U_n)-R(U_{n-1})}_{L^2(0,1)}\,dr. \]
Invoking Lemma \ref{lemma2} and the definitions for $P$ and $Q$ in \eqref{eq:P} and \eqref{eq:Q} respectively, we can say that there exists a positive constant $C$ depending on $T$, $\varepsilon$ and on the Lipschitz constants of the kernels $L(K_\rho'),$ $L(H_\rho'),$ $L(K_\eta'),$ $L(H_\eta')$ such that
\[ \norma{U_{n+1}-U_n}_{L^2(0,1)}\leq C \int_0^t \norma{U_n(r)-U_{n-1}(r)}_{L^2(0,1)}\,dr \qquad \text{for $0\leq t\leq T$}. \]
An easy iterative procedure implies that
\[ \norma{U_{n+1}-U_n}_{L^2(0,1)}\leq\frac{(Ct)^n}{n!}\norma{U_1-U_0}_{L^2(0,1)}, \]
thus, $U_n$ uniformly converges on $[0,T]$ to some $U$. Due to the Lemma \ref{lemma2}, $R$ is continuous in $L^2$ in each component. Moreover, since the subdifferential of $\mathcal{A}$ is closed, we can pass to the limit in \eqref{eq:compact} and obtain that $U$ is a weak solution to the system \eqref{exun}. %This follows, for instance, by adding and subtracting $R(U_{n+1})$ on the right-hand-side of \eqref{eq:compact}.

Concerning uniqueness, let $U_1=(X_1,Y_1,P_1,Q_1)$ and $U_2=(X_2,Y_2,P_2,Q_2)$ be two solutions to system \eqref{exun} with the same initial condition $\overline{U}_1=\overline{U}_2=\overline{U}$.
Proceeding in an analogous way as before, we can argue  that
\[ \norma{U_1-U_2}_{L^2(0,1)}\leq C\int_0^t\norma{U_1-U_2}_{L^2(0,1)}\,dr \qquad \text{for $\;0\leq t\leq T,$} \]
where the positive constant $C$ depends on $T$, $\varepsilon$, $L(K_\rho'),$ $L(H_\rho'),$ $L(K_\eta'),$ $L(H_\eta').$
This implies that $$\norma{U_1-U_2}_{L^2(0,1)}\leq e^{Ct}\norma{\overline{U}_1-\overline{U}_2}_{L^2(0,1)}=0, $$ that proves the uniqueness.
\end{proof}

The following Proposition collects some properties of Lagrangian solution.

\begin{proposizione}
\label{th:property}
Let $F,G:\mathcal{K}\times\mathcal{K}\to L^2(\Omega)$ be uniformly continuous operators in \eqref{eq:F} and \eqref{eq:G} and let $(X,Y)$ be the Lagrangian solution to \eqref{sisteps}. Then, the following properties hold:
\begin{enumerate}[label=(\roman{*}), ref=(\roman{*})]
\item The right-derivatives
\begin{equation}
\label{derivatedx}
V=\frac{d^+}{dt}X, \qquad W=\frac{d^+}{dt}Y
\end{equation}
exist for all $t\geq 0$.
\item $V$ and $W$ are the unique elements of minimal norm in the closed convex sets 
\begin{equation}
\label{Vmin}
V(t)=\bigg( \frac{1}{\varepsilon} \big(P(t)-\partial I_\mathcal{K}(X(t))-X(t)\big) \bigg) ^\circ
\end{equation}
and
\begin{equation}
\label{Wmin}
W(t)=\bigg( \frac{1}{\varepsilon} \big(Q(t)-\partial I_\mathcal{K}(Y(t))-Y(t)\big) \bigg) ^\circ
\end{equation}
respectively. In particular, by replacing $\dot{X}$ by $V$ and $\dot{Y}$ by $W$, \eqref{sisteps} and \eqref{sistpq} hold for all $t\geq 0$.
\item The functions $t\mapsto V(t)$ and $t\mapsto W(t)$ are right-continuous for all $t\geq 0.$
\item If $\mathcal{T}^0_X\subset (0,\infty)$ and $\mathcal{T}^0_Y\subset (0,\infty)$ denote the subsets of all times at which the maps $s\to\norma{V(s)}_{L^2(\Omega )}$ and $s\to\norma{W(s)}_{L^2(\Omega )}$ respectively are continuous, then $(0,\infty)\setminus \mathcal{T}^0_X$ and $(0,\infty)\setminus \mathcal{T}^0_Y$ are negligible, $V$ and $W$ are continuous, $X$ and $Y$ are differentiable in $L^2(\Omega )$ at every point of $\mathcal{T}^0_X$ and $\mathcal{T}^0_Y$ respectively.
\item Setting $\rho (t,\cdot)\coloneqq \Psi^{-1}(X(t,\cdot))$ and $\eta (t,\cdot)\coloneqq \Psi^{-1}(Y(t,\cdot))$ where $\Psi$ is the isometry introduced in \eqref{isometry}, there exist a unique map $v(t,\cdot)\in L^2(\mathbb{R},\rho)$ and a unique map $w(t,\cdot)\in L^2(\mathbb{R},\eta)$ such that
\begin{equation}
\label{xpunto}
\dot{X}(t)=V(t)=\mathsf{P}_{\mathcal{H}_{X(t)}} \bigg(\frac{1}{\varepsilon}\big(P(t)-X(t)\big)\bigg)=v(t,X(t))\in\mathcal{H}_{X(t)},
\end{equation}
for every $t\in \mathcal{T}^0_X$, and
\begin{equation}
\label{ypunto}
\dot{Y}(t)=W(t)=\mathsf{P}_{\mathcal{H}_{Y(t)}} \bigg(\frac{1}{\varepsilon}\big(Q(t)-Y(t)\big)\bigg)=w(t,Y(t))\in\mathcal{H}_{Y(t)}, 
\end{equation}
for every $t\in \mathcal{T}^0_Y$.
\end{enumerate} 
\end{proposizione}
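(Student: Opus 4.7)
The plan is to adapt to the two-species setting the strategy of \cite{gangbo} for the single-species case, which itself rests on Brezis's theory \cite{brezis} of maximal monotone operators with Lipschitz perturbations. The decisive observation is that the two inclusions in \eqref{sisteps} decouple in the variables $X$ and $Y$, with coupling entering only through the forcing terms $P,Q$ defined in \eqref{eq:P}--\eqref{eq:Q}, which by Proposition \ref{th1} together with the uniform continuity and boundedness of $F,G$ are absolutely continuous in $t$ with values in $L^2(\Omega)$. I would therefore treat each inclusion independently; the argument for $Y$ is a literal transcription of the one for $X$, which I now outline.

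For items (i)--(iii), I would rewrite the first inclusion of \eqref{sisteps} as
\[
\dot X(t)+\partial\phi(X(t))\ni P(t)/\varepsilon, \qquad \phi(X):=I_\mathcal{K}(X)+\|X\|_{L^2}^2/(2\varepsilon),
\]
where $\phi$ is proper, convex and lower semicontinuous on $L^2(\Omega)$. Brezis's results on the perturbed gradient flow \cite{brezis} then directly give: (a)~right-differentiability at every $t\ge 0$; (b)~the right-derivative equals the minimal-norm element of $P(t)/\varepsilon-\partial\phi(X(t))$, which after computing $\partial\phi(X)=\partial I_\mathcal{K}(X)+X/\varepsilon$ is exactly \eqref{Vmin}; (c)~right-continuity of $V$. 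For item (iv) I would invoke the energy-type dissipation inequality satisfied by $\|V(\cdot)\|_{L^2}$, which together with the absolute continuity of the perturbation forces this quantity to have locally bounded variation and hence to be continuous outside a countable set $(0,\infty)\setminus\mathcal T^0_X$; continuity of the norm combined with the weak right-continuity already proved upgrades to strong $L^2$-continuity of $V$ on $\mathcal T^0_X$, from which two-sided $L^2$-differentiability of $X$ at points of $\mathcal T^0_X$ follows.

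The crux is item (v), where the point is to establish the purely geometric identity
\[
\bigl(Z-\partial I_\mathcal{K}(X)\bigr)^\circ=\mathsf{P}_{\mathcal H_X}(Z) \qquad \text{for every } X\in\mathcal K \text{ and every } Z\in L^2(\Omega),
\]
which applied with $Z=(P(t)-X(t))/\varepsilon$ yields \eqref{xpunto}. I would prove this identity by combining the variational characterization of $\partial I_\mathcal{K}(X)$ (namely $\xi\in\partial I_\mathcal{K}(X)$ iff $\int_\Omega \xi(\widetilde X-X)\,dm\le 0$ for all $\widetilde X\in\mathcal K$) with the contraction Lemma \ref{lemmacontraction}: any $\xi\in\partial I_\mathcal{K}(X)$ must have zero mean on every maximal interval of constancy of $X$, so $\partial I_\mathcal{K}(X)\subset \mathcal H_X^\perp$; conversely $Z-\mathsf{P}_{\mathcal H_X}(Z)\in\mathcal H_X^\perp$ can be checked to belong to $\partial I_\mathcal{K}(X)$ by a rearrangement argument in which Lemma \ref{lemmacontraction} selects the correct sign and minimality then follows. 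Once $V(t)\in\mathcal H_{X(t)}$ is established, the fact that elements of $\mathcal H_{X(t)}$ are constant on the level sets of $X(t,\cdot)$ is precisely the condition ensuring the existence of a unique Borel map $v(t,\cdot)$, defined $\rho(t)$-a.e.\ on $\mathbb R$, such that $V(t,m)=v(t,X(t,m))$; the identity $\|v(t,\cdot)\|_{L^2(d\rho(t))}=\|V(t)\|_{L^2(\Omega)}$ follows immediately from the push-forward formula for the isometry $\Psi$ of \eqref{isometry}. The same construction applied to $Y$ and $Q$ yields $w(t,\cdot)\in L^2(d\eta(t))$.

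The main obstacle is the projection identity in item (v): this is the non-trivial structural fact underlying the whole Lagrangian formulation, linking the subdifferential $\partial I_\mathcal{K}(X)$ to the geometric object $\mathcal H_X$, and it is what allows the passage from abstract maximal-monotone-operator solutions to genuine Eulerian velocity fields on the physical side. Everything else reduces to a rather direct application of Brezis's framework to the two decoupled inclusions, since the two-species coupling contributes only an absolutely continuous perturbation that fits squarely within the Lipschitz-perturbation theory.
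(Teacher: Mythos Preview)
Your overall strategy coincides with the paper's: reduce (i)--(iv) to Brezis's theory applied to the decoupled inclusions $\dot X+\partial\phi(X)\ni P/\varepsilon$ with $\phi=I_\mathcal{K}+\|\cdot\|^2/(2\varepsilon)$, and handle (v) via the projection identity linking $\partial I_\mathcal{K}(X)$ to $\mathcal{H}_X$. The paper simply cites \cite[Theorem~3.5]{brezis} for (i)--(iii), \cite[Theorem~3.5]{gangbo} for (iv)--(v), and then \cite[Remark~3.9]{brezis} together with \cite[Lemma~2.5]{gangbo} for the projection formula, so your more explicit write-up of (i)--(iv) is fine.

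There is, however, a genuine gap in your argument for (v). The ``purely geometric identity'' $(Z-\partial I_\mathcal{K}(X))^\circ=\mathsf{P}_{\mathcal{H}_X}(Z)$ is \emph{false} for arbitrary $Z\in L^2(\Omega)$. Your first inclusion $\partial I_\mathcal{K}(X)\subset\mathcal{H}_X^\perp$ is correct, but the converse step --- that $Z-\mathsf{P}_{\mathcal{H}_X}(Z)\in\partial I_\mathcal{K}(X)$ for every $Z$ --- fails. Take $X\equiv 0$ on $\Omega$ (so $\mathcal{H}_X$ is the constants and $\mathsf{P}_{\mathcal{H}_X}(Z)=\int_\Omega Z$) and $Z(m)=2m-1$; then $Z-\mathsf{P}_{\mathcal{H}_X}(Z)=2m-1$, yet with $\widetilde X(m)=m\in\mathcal{K}$ one computes $\int_0^1(2m-1)\,m\,dm=1/6>0$, so $2m-1\notin\partial I_\mathcal{K}(0)$. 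No rearrangement argument or use of Lemma~\ref{lemmacontraction} can rescue this, because $\partial I_\mathcal{K}(X)$ is a genuine cone strictly contained in $\mathcal{H}_X^\perp$.

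What makes \eqref{xpunto} true is not a static identity but the \emph{two-sided differentiability} of $X$ at $t\in\mathcal{T}^0_X$. The paper's route is: by \cite[Remark~3.9]{brezis}, at a point of differentiability the derivative $\dot X(t)$ equals the projection of $0$ onto the \emph{affine hull} of $(P(t)-X(t))/\varepsilon-\partial I_\mathcal{K}(X(t))$, not merely the minimal-norm element of the set itself; and by \cite[Lemma~2.5]{gangbo} the closed linear span of $\partial I_\mathcal{K}(X)$ is exactly $\mathcal{H}_X^\perp$, so that affine hull is $(P(t)-X(t))/\varepsilon-\mathcal{H}_{X(t)}^\perp$ and the projection of $0$ onto it is $\mathsf{P}_{\mathcal{H}_{X(t)}}\big((P(t)-X(t))/\varepsilon\big)$. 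You should replace your static argument by this dynamic one; the rest of your proof then goes through unchanged.
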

\begin{proof}
The results in $(i), (ii), (iii)$ are consequences of the general theory of \cite[Theorem 3.5]{brezis}.
Concerning $(iv)$ and $(v)$, we follow \cite[Theorem 3.5]{gangbo}. We prove only \eqref{xpunto}, since the proof of \eqref{ypunto} is similar.  By applying \cite[Remark 3.9]{brezis}, one can see that if $t$ is a point of differentiability of $X$, the derivative with respect to time of $X$ in $t$ is the projection of $0$ onto the affine space generated by $P(t)-\partial I_\mathcal{K}(X(t))-X(t),$ i.e., the orthogonal projection of $P(t)-X(t)$ onto the orthogonal complement of the space generated by $\partial I_\mathcal{K}(X(t)).$ By using \cite[Lemma 2.5]{gangbo}, we obtain \eqref{xpunto}. Since any element of $\mathcal{H}_{X(t)}$ can be written as $v\circ X$, where $v\in L^2(\Omega )$ is a suitable Borel map, we have that there exists a Borel map $v:[0,\infty )\times\mathbb{R}\to\mathbb{R}$ such that $v(t,\cdot)\in L^2(\mathbb{R},\rho (t,	\cdot))$ and $V(t,\cdot)=v(t,X(t))$ for $t\in\mathcal{T}^0_X.$
\end{proof}

We are now in the position of proving the main result of this Section, namely Theorem \ref{main_th}, that concerns existence and uniqueness of the solution to system \eqref{sistema1}.
%\begin{corollario}
%Let $T> 0$. Let $\overline{\rho}, \overline{\eta} \in \mathcal{P}_2(\mathbb{R})$, $\overline{v}\in L^2(d\overline{\rho})$, $\overline{w}\in L^2(d\overline{\eta})$. Then, there exists a unique solution $(\rho, \eta, v, w) \in \emph{Lip}\big([0,T], \mathcal{P}_2(\mathbb{R})^2\times L^2(d\rho(t))\times L^2(d\eta(t))\big)$ in the sense of measures to the system \eqref{sistema1} such that
%\begin{align*}
%&\lim_{t\downarrow 0}\rho (t,\cdot )=\overline{\rho} \quad \textit{in} \quad \mathcal{P}_2(\mathbb{R}), \qquad &\lim_{t\downarrow o}\rho (t,\cdot)v(t,\cdot )=\overline{\rho}\overline{v} \quad \textit{in}\quad \mathcal{M}(\mathbb{R}) \\ &\lim_{t\downarrow 0}\eta (t,\cdot )=\overline{\eta} \quad \textit{in} \quad \mathcal{P}_2(\mathbb{R}), \qquad &\lim_{t\downarrow o}\eta (t,\cdot)w(t,\cdot )=\overline{\eta}\overline{w} \quad \textit{in}\quad \mathcal{M}(\mathbb{R}).
%\end{align*}
%\end{corollario}
\begin{proof}[Proof of Theorem \ref{main_th}]
 Let $\overline{\rho}, \overline{\eta} \in \mathcal{P}_2(\mathbb{R})$ and $\overline{v}\in L^2(d\overline{\rho})$, $\overline{w}\in L^2(d\overline{\eta})$ be given initial conditions. Define the $L^2(\Omega)$-functions $\overline{X}=\Psi(\overline{\rho})$ and  $\overline{Y}=\Psi(\overline{\eta})$  and the compositions $\overline{V}=\overline{v}\circ \overline{X}$ and $\overline{W}=\overline{w}\circ \overline{Y}$.
 Then $(\overline{X},\overline{Y},\overline{V},\overline{W})$ is an admissible initial condition for system \eqref{sisteps}, thus Proposition \ref{th1} ensures existence and uniqueness of a couple $(X,Y)$ that is the Lagrangian solution to \eqref{sisteps}. According to Proposition \ref{th:property} we can define the right-continuous functions $V$ and $W$ such that \eqref{derivatedx} holds for all $t\geq 0$ and introduce $\rho (t,\cdot)\coloneqq \Psi^{-1}(X(t,\cdot))$ and $\eta (t,\cdot)\coloneqq \Psi^{-1}(Y(t,\cdot))$.
Let $v(t,\cdot)$ be the map given by Proposition \ref{th:property} and $\varphi$ be a test function on $(0,T)\times\mathbb{R},$ then
\begin{align}\label{eq:corollario_1}
\begin{aligned}
& \int_o^\infty\int_\mathbb{R} \varepsilon \big( \partial _t\varphi (t,x)+\partial _x\varphi (t,x)v(t,x)\big)v(t,x)\rho (t,dx)\,dt  \\ & =\int_0^\infty \int_\Omega\varepsilon\big(\partial _t\varphi (t,X(t,m))+\partial _x\varphi (t,X(t,m))v(t,X(t,m))\big)v(t,X(t,m))\,dm\,dt. 
\end{aligned}
\end{align}
Using \eqref{xpunto} and integrating by parts, the r.h.s. of \eqref{eq:corollario_1} is equal to
\begin{align}\begin{aligned}
& \int_0^\infty \int_\Omega \bigg(\frac{d}{dt}\varphi (t,X(t,m))\bigg)\big(P(t,m)-X(t,m)\big)\,dm\,dt  \\ &=\int _0^\infty\int_\Omega \varphi(t,X(t,m))\big(\dot{X}(t,m)-\dot{P}(t,m)\big)\,dm\,dt. \label{eq:corollario_2}
\end{aligned}
\end{align}
As proved in Proposition \ref{th:property} we have that $\dot{X}(t,m)=V(t,m)$ and from  the definition of the operator $P(t,m)$ in \eqref{eq:P}, one obtains that \eqref{eq:corollario_2} equals
\begin{align*}
& \int _0^\infty\int_\Omega \varphi(t,X(t,m))\bigg( V(t,m)+\int_\Omega K_\rho'\big(X(s,m)-X(s,m')\big)\,dm' \\& \qquad +\int_\Omega H_\rho '\big(X(s,m)-Y(s,m')\big)\,dm'\bigg)\,dm\,dt\\& =\int_o^\infty\int_\mathbb{R}\varphi(t,x)\big( v(t,x)+K_\rho'\ast\rho (t,x) +H_\rho '\ast\eta (t,x)\big)\,\rho(t,dx)\,dt,
\end{align*}
that is the distributional formulation of the momentum equation in \eqref{sistema1}.
Similarly, for the continuity equation we have
\begin{align*}
&\int _0^\infty \int _0^1 \bigg( \frac{d}{dt}\varphi (t,X(t,m))\bigg)\,dm\,dt \\ & = \int _0^\infty \int _0^1 \big( \partial _t\varphi (t,X(t,m))+\partial _x\varphi (t,X(t,m))V(t,m)\big)\,dm\,dt \\ & = \int_0^\infty\int_\mathbb{R} \big(\partial _t\varphi (t,x)+\varphi _x(t,x)v(t,x)\big)\,\rho (t,dx)\,dt=0.
\end{align*}
Concerning the  initial conditions, since $\lim_{t\downarrow 0}X(t)=\overline{X}$ in $L^2(\Omega )$ for Proposition \ref{th1} and $\overline{X}=\Psi(\overline{\rho})$, we have that $\rho\to\overline{\rho}$ in $\mathcal{P}_2(\mathbb{R})$ as $t\to 0$. Moreover, $\overline{V}=\overline{v}\circ \overline{X},$ so that $\lim_{t\downarrow 0}V(t)=\overline{V}$ in $L^2(\Omega )$, therefore for every $\varphi\in C_b(\mathbb{R})$ we have
\begin{align*}
 & \int_\mathbb{R}\varphi (x)\overline{v}(x)\overline{\rho}(dx)=\int _0^1\varphi(\overline{X}(m))\overline{V}(m)\,dm \\ & = \lim _{t\downarrow 0} \int _0^1\varphi(X(t,m))V(t,m)\,dm = \lim_{t\downarrow 0} 	\int_\mathbb{R}\varphi (t,x)v(t,x)\rho(t,dx).
\end{align*}
A similar argument can be used for the pair $(\eta,w)$.
\end{proof}

\section{Large-damping limit}\label{sec:large_damping}

In this section we study the large-damping limit of  system \eqref{sistema1} for the damping parameter $\sigma \to \infty$ as stated in Theorem \ref{th:large_damping}. In particular, we aim at making the formal argument introduced in Section \ref{sec:large_dam} rigorous, and showing that solutions to  system \eqref{eulerresc} converge to the ones of the first-order system
\begin{equation}
\label{eq:limitsystem}
\begin{dcases}
\frac{\partial\rho}{\partial t}=\frac{\partial}{\partial x} [\rho K_\rho'\ast\rho+\rho H_\rho'\ast\eta], \\ \frac{\partial \eta}{\partial t}=\frac{\partial}{\partial x}[\eta K_\eta'\ast\eta + \eta H_\eta'\ast \rho].
\end{dcases}
\end{equation}

In what follows we will assume that the potentials $H_\rho$, $H_\eta$ $K_\rho,$ $K_\eta$ are under assumptions \eqref{A} and \eqref{SL}. %According to the results in \cite{ags,difrafag} we can relate \eqref{eq:limitsystem} to the energy operator 
%\[ \mathfrak{F}(\rho ,\eta)=\frac{1}{2}\int_\mathbb{R} K_\rho\ast\rho\,d\rho+\frac{1}{2}\int_\mathbb{R}K_\eta\ast\eta\,d\eta+\int_\mathbb{R}H_\rho\ast\eta\,d\rho+\int_\mathbb{R}H_\eta\ast\rho\,d\eta. \]
%The operator $\mathfrak{F}$ is well defined on $\mathcal{P}_2(\mathbb{R})\times\mathcal{P}_2(\mathbb{R})$ due to the assumptions \textbf{(A)} and \eqref{SL} and its sub-differential is
%\[
%\partial\mathfrak{F}(\rho,\eta)=\begin{pmatrix} \displaystyle  K_\rho'\ast\rho+H_\rho'\ast\eta \\ \displaystyle K_\eta'\ast\eta+H_\eta'\ast\rho
%\end{pmatrix}.
%\]

%The operator $\mathfrak{F}(\rho,\eta)$ can be rewritten in terms of Lagrangian coordinates as
%\begin{align*}
%\mathcal{F}(X,Y)=&\frac{1}{2}  \int_\Omega K_\rho\big(X(m)-X(m')\big)\,dm'+\frac{1}{2}\int_\Omega K_\eta \big(Y(m)-Y(m')\big)\,dm' \\& + \int_\Omega H_\rho\big( X(m)-Y(m')\big)\,dm' + \int_\Omega H_\eta\big( Y(m)-X(m')\big)\,dm',
%\end{align*}
%for $(X,Y)\in\ L^2(0,1)\times L^2(0,1)$ and its sub-differential in $(X,Y)$ is given by
%\[ \partial \mathcal{F}\big( (X,Y)\big)(m)=\begin{pmatrix} \displaystyle\int_\Omega K_\rho'\big(X(m)-X(m')\big)\,dm'+ \int_\Omega H_\rho'\big(X(m)-Y(m')\big)\,dm' \\ \displaystyle
%\int_\Omega K_\eta' \big(Y(m)-Y(m')\big)\,dm'+\int_\Omega H_\eta'\big( Y(m)-X(m')\big)\,dm'\end{pmatrix}. \]
Recalling the definition of $F[X,Y](m)$ and $G[X,Y](m)$ in \eqref{eq:F} and \eqref{eq:G}, we introduce the operator
\[ L\big( (X,Y) \big)(m)\coloneqq \begin{pmatrix} \displaystyle F[X,Y](m) \\ \displaystyle  G[X,Y](m) \end{pmatrix}. \]
By setting $Z_\varepsilon=(X_\varepsilon,Y_\varepsilon),$ $\overline{Z}_\varepsilon=(\overline{X}_\varepsilon,\overline{Y}_\varepsilon),$ $U_\varepsilon=(V_\varepsilon,W_\varepsilon)$ and $\overline{U}_\varepsilon=(\overline{V}_\varepsilon,\overline{W}_\varepsilon),$  system \eqref{sisteps} can be rewritten in the following compact form
\begin{equation}
\label{eq:inclusionepsilon}
\varepsilon\dot{Z}_\varepsilon(t,m)+Z_\varepsilon(t,m)+\partial I_{\mathcal{K}^2}(Z_\varepsilon(t,m))\ni \varepsilon\overline{U}_\varepsilon(m)+\overline{Z}_\varepsilon(m)+\int_0^t L (Z_\varepsilon(r,m))\,dr.
\end{equation}
We are now in the position of proving Theorem \ref{th:large_damping}.

%\begin{teorema}
%\label{th:relax}
%Let us assume that the potentials $K_\rho$, $H_\rho$, $K_\eta$, $H_\eta$ satisfy \textbf{(A)}, \eqref{SL} and \eqref{SL}.  Let $(\rho_\varepsilon,\eta_\varepsilon,v_\varepsilon,w_\varepsilon)$ the solution to the system \eqref{eulerresc} with initial data $(\overline{\rho}_\varepsilon,  \overline{\eta}_\varepsilon, \overline{v}_\varepsilon, \overline{w}_\varepsilon)$ and let $(\rho,\eta)$ the solution to the system \eqref{eq:limitsystem} with initial data $(\overline{\rho},\overline{\eta}).$
% Furthermore, let us assume that the initial conditions are well-prepared, i.e.,
% $\overline{\rho}_\varepsilon\to \overline{\rho}$ and $\overline{\eta}_\varepsilon \to \overline{\eta}$ as $\varepsilon\to 0$ in the $2$-Wasserstein system,
% $\overline{v}_\varepsilon=o(1/\varepsilon )$ in $L^2(d\overline{\rho}_\varepsilon )$ as $\varepsilon\to 0$, $\overline{w}_\varepsilon=o(1/\varepsilon )$ in $L^2(d\overline{\eta}_\varepsilon )$ as $\varepsilon\to 0$.
% Then
% \[ \lim _{\varepsilon\to 0} \int_0^T \mathcal{W}\big( (\rho_\varepsilon,\eta_\varepsilon),(\rho,\eta) \big)\,dt=0.
% \]
 %\end{teorema}
 
 \begin{proof}[Proof of Theorem \ref{th:large_damping}] Let  $(\rho,\eta)$ be a solution to system \eqref{eq:limitsystem} subject to the initial condition $(\overline{\rho},\overline{\eta}),$ and $(\rho_\epsilon,\eta_\epsilon,v_\epsilon,w_\epsilon)$ be a solution to system \eqref{eulerresc} subject to the initial condition $(\overline{\rho}_\varepsilon,  \overline{\eta}_\varepsilon, \overline{v}_\varepsilon, \overline{w}_\varepsilon),$ 
 %with the property $\overline{\rho}_\varepsilon\to \overline{\rho}$ and $\overline{\eta}_\varepsilon \to \overline{\eta}$ as $\varepsilon\to 0$ in $\mathcal{P}_2(\mathbb{R})$. 
 Define $X_0=\Psi(\rho)$ and $Y_0=\Psi(\eta)$, then $Z_0=(X_0,Y_0)$ is a solution to 
\begin{equation}
\label{eq:inclusionlimit}
 Z_0(t,m)+\partial I_{\mathcal{K}^2}(Z_0(t,m))\ni \overline{Z}_0(m)+\int_0^t L (Z_0(r,m))\,dr,
 \end{equation}
with $\overline{Z}_0=(\overline{X}_0,\overline{Y}_0)=(\Psi(\overline{\rho}),\Psi(\overline{\eta}))$. Similarly, consider $Z_\varepsilon=(X_\varepsilon,Y_\varepsilon)$ that solves \eqref{eq:inclusionepsilon}, with $X_\varepsilon=\Psi(\rho_\varepsilon)$ and $Y_\varepsilon=\Psi(\eta_\varepsilon)$. Adding $\varepsilon\dot{Z}_0(t,m)$ to both sides of \eqref{eq:inclusionlimit} and taking the difference between  \eqref{eq:inclusionepsilon} and \eqref{eq:inclusionlimit}, we get

\begin{align}\label{eq:th4_2}
\begin{aligned} 
&\varepsilon\big(\dot{Z}_\varepsilon(t,m)-\dot{Z}_0(t,m)\big)+Z_\varepsilon (t,m)-Z_0(t,m)+\partial I_{\mathcal{K}^2}(Z_\varepsilon (t,m))-\partial I_{\mathcal{K}^2}(Z_0(t,m))\\ &\ni\varepsilon\overline{U}_\varepsilon(m)+\overline{Z}_\varepsilon (m)-\overline{Z}_0(m)-\varepsilon\dot{Z}_0(t,m)+\int_0^t\big[L (Z_\varepsilon (r,m))-L(Z_0(r,m)) \big]\,dr. 
\end{aligned}
\end{align}

Multiplying both members of \eqref{eq:th4_2} by $Z_\varepsilon -Z_0$, integrating over $m\in [0,1]$ and using the monotonicity of $\partial I_\mathcal{K}$, we obtain

\begin{align}\label{eq:th4_3}
\begin{aligned}
&\frac{\varepsilon}{2}\frac{d}{dt}\int_\Omega\big(Z_\varepsilon (t,m)-Z_0(t,m)\big)^2\,dm+\int_\Omega\big(Z_\varepsilon (t,m)-Z_0(t,m)\big)^2\,dm  \\ &\leq \int_\Omega \big[ \varepsilon \overline{U}_\varepsilon(m)+\overline{Z}_\varepsilon(m)-\overline{Z}_0(m) \big]\big( Z_\varepsilon (t,m)-Z_0(t,m) \big)\,dm  \\&-\varepsilon\int_\Omega\dot{Z}_0(t,m)\big( Z_\varepsilon (t,m)-Z_0(t,m) \big)\,dm  \\& +\int_0^t\int_\Omega\big[ L( Z_\varepsilon(r,m))- L ( Z_0(r,m) )] \big( Z_\varepsilon (t,m)-Z_0(t,m)\big)\,dm\,dr. 
\end{aligned}
\end{align}
Using a weighted Young inequality and the bounds in Lemma \ref{lemma2}, \eqref{eq:th4_3} becomes
\begin{align*}
&\frac{\varepsilon}{2}\frac{d}{dt}\int_\Omega\big(Z_\varepsilon (t,m)-Z_0(t,m)\big)^2\,dm+\int_\Omega\big(Z_\varepsilon (t,m)-Z_0(t,m)\big)^2\,dm  \\ &\leq \frac{1}{2}\int_\Omega \big[ \varepsilon \overline{U}_\varepsilon(m)+\overline{Z}_\varepsilon(m)-\overline{Z}_0(m) \big]^2\,dm + \frac{1}{2}\int_\Omega \big( Z_\varepsilon(t,m)-Z_0(t,m) \big)^2\,dm  \\&+ \frac{\varepsilon}{2}\int_\Omega\dot{Z}_0^2(t,m)\,dm  +\frac{\varepsilon}{2} \int_\Omega \big( Z_\varepsilon (t,m)-Z_0(t,m) \big)^2\,dm  \\& +\frac{1}{2}\int_0^t\int_\Omega\big[ L( Z_\varepsilon(r,m))- L ( Z_0(r,m))]^2\,dm\,dr +\frac{1}{2}\int_0^t\int_\Omega \big( Z_\varepsilon (t,m)-Z_0(t,m)\big)^2\,dm\,dr\,, 
\end{align*}
which implies
\begin{align*}
    &\frac{\varepsilon}{2}\frac{d}{dt}\int_\Omega\big(Z_\varepsilon (t,m)-Z_0(t,m)\big)^2\,dm+\frac{1-\varepsilon}{2}\int_\Omega\big(Z_\varepsilon (t,m)-Z_0(t,m)\big)^2\,dm  \\ &\leq
    \frac{1}{2}\int_\Omega \big[ \varepsilon \overline{U}_\varepsilon(m)+\overline{Z}_\varepsilon(m)-\overline{Z}_0(m) \big]^2\,dm + \frac{\varepsilon}{2}\int_\Omega\dot{Z}_0^2(t,m)\,dm \\
    & \qquad + C \frac{1}{2}\int_0^t\int_\Omega \big( Z_\varepsilon (t,m)-Z_0(t,m)\big)^2\,dm\,dr\,,
\end{align*}
where $C$ is a fixed constant depending on the operator $L$ and coming from Lemma \ref{lemma2}. 
Integrating over $[0,T]$ and denoting
\begin{align*} A(\varepsilon, T) &\coloneqq  (2\varepsilon+4T) \int_\Omega
\big(\overline{Z}_\varepsilon (m)-\overline{Z}_0(m)\big)^2\,dm + 4T\int_\Omega\big[ \varepsilon \overline{U}_\varepsilon (m)\big]^2\,dm \\& + 2\varepsilon\int_0^T\int_\Omega\dot{Z}_0^2(t,m)\,dm\,dt,
\end{align*}
assuming $\varepsilon<1/2$, by using Cauchy-Schwarz inequality we have that
\begin{align*}
& \int_0^T\int_\Omega\big(Z_\varepsilon(t,m)-Z_0(t,m)\big)^2\,dm\,dt  \\
&\leq C \int_0^T\int_0^t\int_\Omega\big( Z_\varepsilon (r,m)-Z_0(r,m)\big)^2\,dm\,dr\,dt+A(\varepsilon,T)\,, 
\end{align*}
by suitably renaming the constant $C$.  By applying Gronwall's lemma we get
\[ \int_0^T\int_\Omega \big(Z_\varepsilon (t,m)-Z_0(t,m)\big)^2\,dm\,dt\leq A(\varepsilon,T) e^{C T}. \]
In order to conclude it is enough to see that $A(\varepsilon)\to 0$ as $\varepsilon\to 0$. We recall assumption $(i)$ reads $\overline{\rho}_\varepsilon\to \overline{\rho}$
and $\overline{\eta}_\varepsilon \to \overline{\eta}$ in $\mathcal{P}_2(\mathbb{R})$, thus $\overline{Z}_\varepsilon\to \overline{Z}_0$ as $\varepsilon\to 0$ in $\Ldod$. Assumption $(ii)$ implies initial velocities under the following conditions
\[
\overline{v}_\varepsilon=o(1/\varepsilon )\,\mbox{ in }\, L^2(d\overline{\rho}_\varepsilon )\,\mbox{ and }\, \overline{w}_\varepsilon=o(1/\varepsilon )\,\mbox{ in }\,L^2(d\overline{\eta}_\varepsilon )\,\mbox{ as }\, \varepsilon\to 0,
\]
thus $\varepsilon\overline{U}_\varepsilon \to 0$ as $\varepsilon\to 0$. Finally, the last term in $A(\varepsilon,T)$ converges to zero since $\dot{Z}_0$ does not depend on $\varepsilon$.
\end{proof}

\section{Newtonian potentials}\label{sec:newtonian}
This section is devoted to study existence of solutions and asymptotic property of system \eqref{sistema1} when self-attractive forces are driven by Newtonian potentials, i.e., $K_\rho (x)=K_\eta (x)=\abs{x}$. In order to proceed, we need to restrict the analysis to the  case of equal cross potentials, namely $H_\rho=H_\eta \eqqcolon   H$. We also consider two uniformly convex external potentials $A_\rho$ and $A_\eta$ acting on the system. More precisely, we assume $A_\rho\,,A_\eta\in C^2(\mathbb{R})$ under assumptions \eqref{H1} and \eqref{H2}. These additional terms don't affect the study of existence of solutions, in the \emph{generalised} sense specified in Definition \ref{def:generalised}, but are only required in the study of asymptotic behaviour in Theorem \ref{th:newtonian}. The system we are dealing with can be expressed in Lagrangian coordinates as follows
\begin{equation}
\label{systempseudoi}
\begin{dcases}
\partial _t X(t,m)=V(t,m), \\ \partial _t Y(t,m)=W(t,m), \\ \partial _t V(t,m)=-\int_\Omega \text{sign}\big(X(t,m)-X(t,m')\big)\,dm' \\ \hspace{1.9cm} -\int_\Omega H'\big(X(t,m)-Y(t,m')\big)\,dm'-\sigma V(t,m) -A_\rho'(X),\\ \partial _t W(t,m)=-\int_\Omega \text{sign}\big(Y(t,m)-Y(t,m')\big) \,dm' \\ \hspace{2cm} -\int_\Omega H'\big(Y(t,m)-X(t,m')\big)\,dm'-\sigma W(t,m) -A_\eta'(Y).
\end{dcases}
\end{equation}

We can associate to the system \eqref{systempseudoi} the following  functional
\begin{equation}
\label{eq:energyfunctional}
\begin{split}
     \mathfrak{F}(X,Y)=&\frac{1}{2}\int_\Omega\int_\Omega \abs{X(m)-X(m')}\,dm'\,dm+\frac{1}{2}\int_\Omega\int_\Omega \abs{Y(m)-Y(m')}\,dm'\,dm \\&+\int_\Omega\int_\Omega H\big( Y(m)-X(m')\big)\,dm'\,dm +\int_\Omega A_\rho(X(m))\,dm+\int_\Omega A_\eta( Y(m))\,dm.
\end{split}
\end{equation}
In particular, we write
\[ \mathfrak{F}(X,Y)\coloneqq S(X)+S(Y)+K(X,Y), \]
where
\begin{align*} 
&S(X)\coloneqq \frac{1}{2}\int_\Omega\int_\Omega\abs{X(m)-X(m')}\,dm'\,dm, \\ &S(Y)\coloneqq \frac{1}{2}\int_\Omega\int_\Omega\abs{Y(m)-Y(m')}\,dm'\,dm, \\
&K(X,Y)\coloneqq \int_\Omega\int_\Omega H\big(Y(m)-X(m')\big)\,dm'\,dm +\int_\Omega A_\rho(X(m))\,dm+\int_\Omega A_\eta( Y(m))\,dm. 
\end{align*}

As shown in \cite{bonaschi,cdefs}, it is easy to prove that the self-interaction contributions in $\mathfrak{F}$ are linear when restricted to $\mathcal{K}$. 

\begin{lemma} If $X\in\mathcal{K}$, then
\[ S(X)=\int_\Omega (2m-1)X(m)\,dm. \]
\end{lemma}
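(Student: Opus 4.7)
The plan is a direct computation exploiting monotonicity. Since $X \in \mathcal{K}$ is non-decreasing, for any $m, m' \in \Omega$ the sign of $X(m) - X(m')$ agrees with the sign of $m - m'$ (up to the points where $X$ is constant, where $X(m) - X(m') = 0$ and the sign convention is irrelevant). Therefore
\[ |X(m) - X(m')| = \mathrm{sign}(m-m')\bigl(X(m) - X(m')\bigr) \qquad \text{for a.e.\ } (m,m') \in \Omega \times \Omega. \]

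Using symmetry of the integrand in $(m,m')$, the half in front of the double integral absorbs the factor $2$ coming from splitting into the regions $\{m > m'\}$ and $\{m < m'\}$, so
\[ S(X) = \int_0^1\!\int_0^m \bigl(X(m) - X(m')\bigr)\,dm'\,dm. \]
I would then compute the two resulting pieces separately by Fubini. The first one gives
\[ \int_0^1\!\int_0^m X(m)\,dm'\,dm = \int_0^1 m\, X(m)\,dm, \]
while interchanging the order of integration in the second yields
\[ \int_0^1\!\int_0^m X(m')\,dm'\,dm = \int_0^1\!\int_{m'}^1 X(m')\,dm\,dm' = \int_0^1 (1-m')\, X(m')\,dm'. \]
Subtracting produces $\int_0^1 \bigl(m - (1-m)\bigr) X(m)\,dm = \int_\Omega (2m-1) X(m)\,dm$, which is the claim.

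There is no real obstacle: finiteness of all integrals is guaranteed by $X \in L^2(\Omega) \subset L^1(\Omega)$ (and Fubini applies because the integrand $|X(m) - X(m')|$ is bounded by $|X(m)| + |X(m')|$, hence in $L^1(\Omega \times \Omega)$). The only subtle point is to justify the removal of the absolute value on a set of full measure, which follows from the definition of $\mathcal{K}$ (non-decreasing, so $m \geq m' \Rightarrow X(m) \geq X(m')$).
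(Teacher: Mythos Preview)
Your proof is correct and follows essentially the same route as the paper: reduce the absolute value to an integral over $\{m>m'\}$ using monotonicity of $X$, then compute the two pieces via Fubini to obtain $\int_\Omega mX(m)\,dm - \int_\Omega (1-m)X(m)\,dm$. The paper phrases the first step by decomposing the set $\{X(m)\geq X(s)\}$ into $\{m\geq s\}$ and a residual set where $X$ is constant (hence the integrand vanishes), but this is just a slight rewording of your $\mathrm{sign}(m-m')$ argument.
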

\begin{proof}A direct computation shows that
\begin{align*}
S(X)&=\frac{1}{2}\int_\Omega\int_\Omega \abs{X(m)-X(s)}\,ds\,dm =\int\int_{\left\{X(m)\geq X(s)\right\}} \big(X(m)-X(s)\big)\,dm\,ds.
\end{align*}
Since $X\in\mathcal{K},$ $X$ is non-decreasing, then the set $\left\{X(m)\geq X(s)\right\}$ can be characterised as follows
\[ \{X(m)\geq X(s)\}= \{m\geq s\}\cup \{ m\leq s\leq S(m) \}, \]with\[ S(m)=\sup\{ s\in [0,1] \; :\; X(s)=X(m) \}.\]
Moreover, $X(s)=X(m)$ on $\{ m\leq s\leq S(m) \},$ then
\begin{align*}
S(X)&=\int\int_{m\geq s} \big( X(m)-X(s)\big)\,dm\,ds \\  &=\bigg( \int_\Omega\int_0^m X(m)\,ds\,dm-\int_\Omega\int_s^1 X(s)\,dm\,ds \bigg)\\&=\int_\Omega mX(m)\,dm-\int_\Omega (1-s)X(s)\,ds \\ &= \int_\Omega (2m-1)X(m)\,dm.
\end{align*}
\end{proof}

The first result in this Section consists in proving the existence of a map $t\mapsto (X(t),Y(t))$ that is a generalised Lagrangian solution to \eqref{sisteps} with respect to the choice $\Theta=\mathsf{P}_{\mathcal{H}_X}(F_1)(t,m)$ and $\Xi=\mathsf{P}_{\mathcal{H}_Y}(F_2)(t,m)$, i.e., the system \eqref{systempseudoi} can be written as follows
\begin{equation}
\label{projsys}
\begin{dcases}
\partial _t X(t,m)=\mathsf{P}_{\mathcal{H}_X}(V)(t,m), \\ \partial _t Y(t,m)=\mathsf{P}_{\mathcal{H}_Y}(W)(t,m), \\ \partial _t V(t,m)=-\mathsf{P}_{\mathcal{H}_X}(F_1[X,Y])(m)-\sigma V(t,m), \\ \partial _t W(t,m)=-\mathsf{P}_{\mathcal{H}_Y}(F_2[X,Y])(m)-\sigma W(t,m),
\end{dcases}
\end{equation}
where
\begin{equation}
\label{eq:F1}
    F_1[X,Y](m)= 2m-1+\int_\Omega H'\big(X(m)-Y(m')\big)\,dm'+A_\rho'(X)
\end{equation}
and
\begin{equation} 
\label{eq:F2}
    F_2[X,Y](m)= 2m-1+\int_\Omega H' \big(Y(m)-X(m')\big)\,dm' +A_\eta'(Y) 
\end{equation}
are the force operators and describe the external and interaction forces that act on the system.

The following proposition ensures that a generalised Lagrangian solution exists.

\begin{proposizione}\label{th:ex_newtonian}
Assume  the cross-potential $H$ under assumptions \eqref{A} and \eqref{SL}. Assume the external potentials $A_\rho, A_\eta\in C^2(\mathbb{R})$. Then for every $(\overline{X},\overline{Y},\overline{V},\overline{W})\in \mathcal{K}^2\times \mathcal{H}_{\overline{X}}\times\mathcal{H}_{\overline{Y}}$ there exists a generalised Lagrangian solution to system \eqref{systempseudoi} with initial data $(\overline{X},\overline{Y},\overline{V},\overline{W})$ in the sense of Definition \ref{def:generalised}.
\end{proposizione}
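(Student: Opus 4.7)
The plan is to construct generalised Lagrangian solutions by vanishing regularisation of the Newtonian self-interaction, along the lines of the one-species argument in \cite{gangbo}. Introduce the smooth family
\[ K_\delta(x)=\sqrt{x^2+\delta^2}-\delta, \qquad K_\delta'(x)=\frac{x}{\sqrt{x^2+\delta^2}}, \]
which satisfies hypotheses \eqref{A} and \eqref{SL} uniformly in $\delta>0$ and converges pointwise to $|\cdot|$ and $\mathrm{sign}$ respectively. For each $\delta$ substitute $K_\rho=K_\eta=|\cdot|$ by $K_\delta$ in \eqref{systempseudoi} and apply Proposition~\ref{th1}, which yields a unique Lagrangian solution $(X_\delta,Y_\delta)\in \mathrm{Lip}_{\mathrm{loc}}([0,T];\mathcal{K})^2$ with initial data $(\overline{X},\overline{Y},\overline{V},\overline{W})$. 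The external $C^2$ potentials $A_\rho,A_\eta$ contribute a locally Lipschitz perturbation absorbed by the same fixed-point scheme used in the proof of Proposition~\ref{th1}.

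Next, I would derive uniform-in-$\delta$ estimates. Defining the regularised functional $\mathfrak{F}_\delta$ as in \eqref{eq:energyfunctional} with $K_\delta$ replacing $|\cdot|$, a direct computation along the flow gives the dissipation identity
\[ \frac{d}{dt}\Big(\mathfrak{F}_\delta(X_\delta,Y_\delta)+\tfrac{1}{2}\|V_\delta\|_{L^2}^2+\tfrac{1}{2}\|W_\delta\|_{L^2}^2\Big)=-\sigma\big(\|V_\delta\|_{L^2}^2+\|W_\delta\|_{L^2}^2\big). \]
The nonnegativity of $K_\delta$ and $H$ together with a Gronwall argument produce uniform $L^\infty_t L^2_m$ bounds on $X_\delta, Y_\delta, V_\delta, W_\delta$ on any finite $[0,T]$. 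Since $\partial_t X_\delta=V_\delta$ and $\partial_t Y_\delta=W_\delta$ in the strong $L^2$-sense by Proposition~\ref{th:property}, equicontinuity in $t$ and Ascoli-Arzelà provide a subsequence converging in $C([0,T];L^2(\Omega))^2$ to a limit $(X,Y)$ with values in $\mathcal{K}^2$, while $(V_\delta,W_\delta)\rightharpoonup(V,W)$ weakly in $L^2([0,T];L^2(\Omega))^2$.

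The last step is to pass to the limit in the differential inclusion and identify the force terms $\Theta$ and $\Xi$ of Definition~\ref{def:generalised}. The cross-interaction integrals $\int H'(X_\delta(m)-Y_\delta(m'))\,dm'$ and the external forces $A_\rho'(X_\delta), A_\eta'(Y_\delta)$ pass to the limit strongly in $L^2_{t,m}$ by dominated convergence together with the strong $L^2$-convergence of $X_\delta, Y_\delta$. The self-interaction contributions $\int_\Omega K_\delta'(X_\delta(m)-X_\delta(m'))\,dm'$ are uniformly bounded in $L^\infty$ by $1$, hence admit weak-$\ast$ limits $\Theta_1, \Xi_1$; on the strictly-increasing set of $X$ they coincide pointwise with $2m-1$, while on each maximal constancy interval $(a,b)\subset\Omega_X$ the sticky structure encoded in \eqref{Vmin} combined with the $\prec$-contraction of Lemma~\ref{lemmacontraction} forces $\Theta_1$ to have the same average as $2m-1$ on $(a,b)$, delivering exactly $\Theta=\mathsf{P}_{\mathcal{H}_X}(F_1[X,Y])$ as prescribed by \eqref{eq:theta}. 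The analogous identification gives $\Xi$. Once $(\Theta,\Xi)$ are identified, the semigroup property \eqref{def:semigroupx}--\eqref{def:semigroupy} and projection formula \eqref{def:projx}--\eqref{def:projy} pass to the limit using $L^2$-continuity of $\mathsf{P}_\mathcal{K}$ and the corresponding smooth identities for each $X_\delta, Y_\delta$.

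The principal obstacle is the identification of $\Theta, \Xi$ on the constancy plateaus of $X, Y$, where the regularised signs may oscillate wildly and only provide a weak limit a priori. The sticky structure $\partial I_\mathcal{K}$ coupled with the domination property $\Theta\prec F_1[X,Y]$ is exactly what rigidifies the limit and forces it to coincide with the $\mathcal{H}_X$-projection; carrying this closure out simultaneously on both species with the cross-interaction coupling present is the delicate point that distinguishes this argument from its one-species counterpart in \cite{gangbo}.
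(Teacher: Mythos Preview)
Your approach via vanishing regularisation of the Newtonian potential is genuinely different from the paper's, which proceeds by \emph{discretisation of the initial data}: one takes piecewise-constant $(\overline{X},\overline{Y},\overline{V},\overline{W})$, solves the finite-dimensional sticky-particle ODE system explicitly between collision times (where $\mathsf{P}_{\mathcal{H}_{X(t)}}$ reduces to a finite projection and the forces $F_m,F_n$ are Lipschitz), verifies by induction over collisions that the resulting piecewise trajectory is a generalised Lagrangian solution with $\Theta=\mathsf{P}_{\mathcal{H}_X}(F_1)$, and then passes from discrete to general initial data using the stability theorem of \cite[Theorem~4.4]{gangbo}. The advantage of that route is that the approximating solutions are already generalised solutions with the correct $\Theta,\Xi$, so no identification problem arises in the limit.

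Your regularisation route has a real gap precisely at the identification step you flag as the ``principal obstacle''. For smooth $K_\delta$ the Lagrangian solutions $X_\delta$ typically have \emph{no} plateaus (particles approach but do not collide), so $\mathcal{H}_{X_\delta}=L^2$ and $\Theta_\delta=F_\delta[X_\delta,Y_\delta]$ with no projection. When a plateau $(a,b)$ forms in the limit $X$, the values $K_\delta'(X_\delta(m)-X_\delta(m'))$ for $m,m'\in(a,b)$ depend on the ratio $|X_\delta(m)-X_\delta(m')|/\delta$ and can converge weakly to any odd function with values in $[-1,1]$; antisymmetry does give the average condition $\Theta-F_1\in\mathcal{H}_X^\perp$, but neither \eqref{Vmin} (a property of the limit, not of the approximants) nor Lemma~\ref{lemmacontraction} (a statement about $\mathsf{P}_{\mathcal{H}_X}$, not about weak limits) furnishes the domination $\Theta\prec F_1$, and your claim that the limit is \emph{exactly} $\mathsf{P}_{\mathcal{H}_X}(F_1)$ is unsupported. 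A secondary issue: you invoke nonnegativity of $H$ for the energy bound, but only \eqref{A} and \eqref{SL} are assumed here (attractiveness \eqref{XX} enters only in Theorem~\ref{th:newtonian}); the uniform estimates must instead come from a direct Gronwall argument using the sublinear growth of all force terms.
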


\begin{proof} The proof is based on a discretization argument, inspired by the result in \cite[Theorem 4.5]{gangbo}. Consider the following two partitions of $\Omega$:
\[ 0\eqqcolon   l_0<l_1<\cdots<l_N\coloneqq 1, \quad \mbox{ and }\quad 0\eqqcolon   z_0<z_1<\cdots<z_M\coloneqq 1,  \]
with
\[ l_i\coloneqq \sum_{j=1}^i m_j, \quad\mbox{ and }\quad  z_j\coloneqq \sum_{i=1}^j n_i, \]
for $i=1,\ldots,N-1$ and $j=1,\ldots,M-1$, and introduce the  piecewise constant functions
\begin{gather}
\label{eq:particlexv}
X(t,\cdot)=\sum_{i=1}^N x_i(t)\mathbbm{1}_{L_i}\,, \qquad V(t,\cdot)=\sum_{i=1}^N v_i(t)\mathbbm{1}_{L_i}, \\ \label{eq:particleyw}
 Y(t,\cdot)=\sum_{j=1}^M y_j(t)\mathbbm{1}_{Z_j}\,, \qquad W(t,\cdot)=\sum_{j=1}^M w_j(t)\mathbbm{1}_{Z_j},
\end{gather}
defined on the intervals $L_i\coloneqq [l_{i-1},l_i)$ and $Z_j\coloneqq [z_{j-1},z_j),$ for $i=1,\ldots,N-1$ and $j=1,\ldots,M-1$. Consider the finite dimensional Hilbert set
\[ \mathcal{H}_m\times\mathcal{H}_n\coloneqq \bigg\{ (X,Y)=\bigg(\sum_{i=1}^N x_i\mathbbm{1}_{L_i},\sum_{j=1}^M y_j\mathbbm{1}_{Z_j}\bigg) \; :\; (x,y)\in\mathbb{R}^N\times\mathbb{R}^M\bigg\}\subset L^2(\Omega)\times L^2(\Omega) \]
and its closed convex cone
\[ \mathcal{K}_m\times\mathcal{K}_n\coloneqq \bigg\{ (X,Y)=\bigg(\sum_{i=1}^N x_i\mathbbm{1}_{L_i},\sum_{j=1}^M y_j\mathbbm{1}_{Z_j}\bigg) \; :\; (x,y)\in\mathbb{K}^N\times\mathbb{K}^M\bigg\}\subset \mathcal{K}\times \mathcal{K}. \]
Note that the projected forces
\[ F_m[X,Y]\coloneqq \mathsf{P}_{\mathcal{H}_m}(F_1[X,Y]) \qquad \text{and} \qquad F_n[X,Y]\coloneqq \mathsf{P}_{\mathcal{H}_n}(F_2[X,Y]) \]
are well defined and Lipschitz continuous  according to the definitions in \eqref{eq:F1}-\eqref{eq:F2} and assumptions \eqref{A} and \eqref{SL}.

Now, assume that the initial condition $(\overline{X},\overline{Y},\overline{V},\overline{W})\in \mathcal{K}_m\times\mathcal{K}_n\times\mathcal{H}_{\overline{X}}\times\mathcal{H}_{\overline{Y}}$ doesn't hit the boundary of $\mathcal{K}_m\times\mathcal{K}_n$. Consider the time interval $[0,t_1)$ with
\[
 t_1=\inf\left\{t > 0 \,:\, X(t)\in\partial \mathcal{K}_m \ \text{or} \ Y(t)\in \partial \mathcal{K}_n \right\},
\]
then, we obtain  \eqref{eq:particlexv}-\eqref{eq:particleyw} by solving
\begin{equation}
\begin{aligned}\label{eq:existece1}
&\dot{X}(t)=V(t), \quad \dot{V}(t)=\mathsf{P}_{\mathcal{H}_{m}}  \bigg( \frac{1}{\varepsilon} \big( F_1[X(t),Y(t)] - V(t) \big) \bigg) ,   \\ 
& \dot{Y}(t)=W(t),\quad \dot{W}(t)=\mathsf{P}_{\mathcal{H}_{n}} \bigg( \frac{1}{\varepsilon} \big( F_2[X(t),Y(t)] - W(t) \big) \bigg) .
\end{aligned}
\end{equation}
We have that $\mathcal{H}_{m}=\mathcal{H}_{X(t)}$ and $\mathcal{H}_{n}=\mathcal{H}_{Y(t)}$ in $[0,t_1)$, thus the projection onto the set $\mathcal{H}_{m}$ yields piecewise constant functions defined on the same intervals as   $(X,V)$, and similarly the projection onto $\mathcal{H}_{n}$.
Taking $t_1$ as the new initial time, we can consider a new initial condition $(\overline{X}',\overline{Y}',\overline{V}',\overline{W}')\in \mathcal{K}_{m'}\times\mathcal{K}_{n'}\times\mathcal{H}_{\overline{X}'}\times\mathcal{H}_{\overline{Y}'}$ of dimensions $N'< N$ and $M'< M$ and, proceeding in the same fashion, we can define $t_2>t_1$ and consider the evolution in the time interval $[t_1,t_2)$. Iterating the procedure, we obtain a sequence of collision times $0\eqqcolon  t_0<t_1<\cdots <t_K\coloneqq \infty$ and the quadruple $(X,Y,V,W)$ such that
\begin{equation}
\label{existence2}
\begin{aligned}
&\dot{X}(t)=V(t), \quad \dot{V}(t)=
\mathsf{P}_{\mathcal{H}_{X(t)}}  \bigg(  \frac{1}{\varepsilon} \big( F_1[X(t),Y(t)] - V(t) \big)\bigg),\\& \dot{Y}(t)=W(t),  \quad \dot{W}(t)=
\mathsf{P}_{\mathcal{H}_{Y(t)}} \bigg(  \frac{1}{\varepsilon} \big( F_2[X(t),Y(t)] - W(t) \big) \bigg),
\end{aligned}
\end{equation}
for all $t\in [t_{k-1},t_k)$, $k=1,\ldots ,K$
with
\begin{equation}
\label{eq:spaces}
\mathcal{H}_{X(t)}=\mathcal{H}_{X(t_{k-1})}, \quad \mathcal{H}_{Y(t)}=\mathcal{H}_{Y(t_{k-1})}. 
\end{equation}
When an inelastic collision occurs, we have that
\begin{equation}
    \begin{aligned}
        \label{collision}
&X(t_k+)=X(t_k-), \qquad V(t_k+)=\mathsf{P}_{\mathcal{H}_{X(t_k)}} (V(t_k-)), \\& Y(t_k+)=Y(t_k-), \qquad W(t_k+)=\mathsf{P}_{\mathcal{H}_{Y(t_k)}} (W(t_k-)).
    \end{aligned}
\end{equation}

%%%%%%%%%%%%%%%

%Let us consider $\Theta (t) \coloneqq \mathsf{P}_{\mathcal{H}_{X(t)}}(F_1[X(t),Y(t)])$ and $\Xi (t) \coloneqq \mathsf{P}_{\mathcal{H}_{Y(t)}}(F_2[X(t),Y(t)])$ for all $t\geq 0$. By construction follows that $V$ is the right derivative of $X$ and $W$ is the right derivative of $Y$.
%Let us assume $t_1=0$ and proceed by induction on the collision times.
In order to prove inclusion \eqref{def:semigroupx},  it is not restrictive to assume $t_1=0$. We proceed by induction on the collision times. In the first time interval $[0,t_1)$, inclusion \eqref{def:semigroupx} holds by considering the empty set for the subdifferential $\partial I_{\mathcal{K}} (X(t))$. 
%the piecewise constant functions defined in \eqref{eq:particlexv}-\eqref{eq:particleyw} are discrete Lagrangian solution in the spirit of Theorems \ref{th1}-\ref{th:property} to 
%\[
%\begin{dcases}
%\varepsilon \dot{X}(t,m)+X(t,m)+\partial I_{\mathcal{K}_m}(X(t,m))\ni \varepsilon\overline{V}(m)+\overline{X}(m)+\int_0^t F_m[X(r,\cdot),Y(r,\cdot)]\,dr, \\ \varepsilon \dot{Y}(t,m)+Y(t,m)+\partial I_{\mathcal{K}_n}(Y(t,m))\ni \varepsilon\overline{W}(m)+\overline{Y}(m)+\int_0^t F_n[X(r,\cdot),Y(r,\cdot)]\,dr,
%\end{dcases}
%\]
%thus inclusion \eqref{def:semigroupx} is satisfied.
Now, suppose that \eqref{def:semigroupx} is satisfied in $[t_{k-1},t_k)$. Hence, by induction assumption,
\begin{equation}
\label{eq:proof1}
 \varepsilon V(t_k-)+ X(t_k-)+\xi =\varepsilon \overline{V}+\overline{X}+\int_0^{t_k} \mathsf{P}_{\mathcal{H}_{X(s)}}(F_1[X(s),Y(s)])\,ds
\end{equation}
with $\xi\in\partial I_\mathcal{K} (X(t_k)).$ By \eqref{existence2},
\begin{align} 
\label{eq:proof2}
\begin{aligned}
\varepsilon \dot{X}(t)+X(t) =& X(t_k+)+ \varepsilon V(t_k+)+\int_{t_k}^t \mathsf{P}_{\mathcal{H}_{X(s)}}(F_1[X(s),Y(s)])\,ds
\\=& X(t_k+)+ \varepsilon \big( V(t_k+)-V(t_k-) \big) +\varepsilon V(t_k-) \\ 
&+\int_{t_k}^t \mathsf{P}_{\mathcal{H}_{X(s)}}(F_1[X(s),Y(s)])\,ds
\end{aligned}
\end{align}
for any $t\in [t_k,t_{k+1}).$ Combining  equations \eqref{eq:proof1} and \eqref{eq:proof2} we get 
\[
\varepsilon \dot{X}(t)+X(t)+\varepsilon\big( V(t_k-)-V(t_k+) \big) + \xi =\varepsilon\overline{V}+\overline{X}+\int_0^t \mathsf{P}_{\mathcal{H}_{X(s)}}(F_1[X(s),Y(s)])\,ds.
\]
Invoking again \eqref{existence2}, we have 
\[ V(t_k-)=\lim_{h\to 0^+} \frac{X(t_k)-X(t_k-h)}{h}, \]
hence using \eqref{collision}, we derive
\begin{align*}
V(t_k-)-V(t_k+)=& V(t_k-)-\mathsf{P}_{\mathcal{H}_{X(t_k)}}(V(t_k-)) \\ =& \lim_{h\to 0^+}\frac{X(t_k)-X(t_k-h)-\mathsf{P}_{\mathcal{H}_{X(t_k)}}\big( X(t_k)-X(t_k-h)\big)}{h} \\ =& \lim_{h\to 0^+}\frac{\mathsf{P}_{\mathcal{H}_{X(t_k)}} (X(t_k-h))-X(t_k-h)}{h}.
\end{align*}
Applying \cite[Lemma 2.6]{gangbo}, we find that $V(t_k-)-V(t_k+)\in\partial I_\mathcal{K}(X(t_k))$, and using the monotonicity property of the sub-differential, one obtains that
\[ \xi + V(t_k-)-V(t_k+)\in\partial I_\mathcal{K}(X(t))\]
for all $t\in [t_k, t_{k+1}).$ Therefore inclusion \eqref{def:semigroupx} is satisfied.
Now, let us prove that \eqref{def:projx} holds.
Consider system \eqref{sistpq} with $P$ replaced by
\[ 
P_1 (t,m) = \varepsilon\overline{V} (m) + \overline{X} (m) + \int_0^t F_1 [ X(\cdot,r),Y(\cdot ,r)] (m)\,dr.
\]
Thus, we have that for any $t\geq s\geq 0$, 
\[ \frac{1}{\varepsilon}[ P_1(s)-X(s)]-V(s)\in\partial I_\mathcal{K}(X(s))\subset \partial I_\mathcal{K}(X(t)), \]
where we used the monotonicity of the sub-differential. Integrating on $s\in[0,t]$ we obtain
\[ \int_0^t \frac{1}{\varepsilon}[P_1(s) -X(s)]\,ds+\overline{X}-X(t)\in\partial I_\mathcal{K}(X(t)) \]
for a.e. $t\geq 0.$ 
Since the following property holds (cfr. \cite{gangbo})
\[ Y=\mathsf{P}_{\mathcal{K}}(X)\iff X-Y\in\partial I_\mathcal{K}(Y), \]
we derive
\[ X(t)=\mathsf{P}_\mathcal{K}\bigg( \overline{X} -\frac{1}{\varepsilon}\int_0^t X(s)\,ds +\frac{1}{\varepsilon}t\big( \varepsilon \overline{V}+\overline{X} \big)+\frac{1}{\varepsilon} \int_0^t (t-s)F_1[X(s),Y(s)]\,ds \bigg). \]
A similar proof holds for the equations \eqref{def:semigroupy} and \eqref{def:projy}.
Finally, since the construction above starts form descrete initial data in the form of the piecewise constant functions as in \eqref{eq:particlexv}-\eqref{eq:particleyw}, and since these functions are dense in $L^2(\Omega)$, we can approximate any given initial data and then combine the procedure into the proof with the stability Theorem 4.4 in \cite{gangbo}.
\end{proof}

Now, we provide an estimate on the total energy of the system \eqref{projsys}, used in the proof of Theorem \ref{th:newtonian} below.

\begin{lemma}
Let $(X,Y,V,W)\in\mathcal{K}^2\times L^2(0,1)^2$ be the solution to the system \eqref{projsys} with initial data $(\overline{X},\overline{Y},\overline{V},\overline{W}).$ Then, the following uniform estimate holds:
\begin{equation}
\label{eq:toten}
\sup_{t\geq 0} \bigg( \mathfrak{F}(X,Y)+\frac{1}{2}\norma{V}^2_{L^2(\Omega)}+\frac{1}{2}\norma{W}^2_{L^2(\Omega)}\bigg)\leq\mathfrak{F}(\overline{X},\overline{Y})+\frac{1}{2}\norma{\overline{V}}^2_{L^2(\Omega)}+\frac{1}{2}\norma{\overline{W}}^2_{L^2(\Omega)}.
\end{equation}
\end{lemma}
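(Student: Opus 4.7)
The plan is to show that the map $t\mapsto E(t):=\mathfrak{F}(X(t),Y(t))+\tfrac12\|V(t)\|_{L^2}^2+\tfrac12\|W(t)\|_{L^2}^2$ is non-increasing, from which \eqref{eq:toten} follows by taking $t=0$ on the right. Between two consecutive collision times $t_{k-1}<t_k$, the quadruple $(X,Y,V,W)$ evolves smoothly according to \eqref{projsys}, with $X(t)\in\mathcal{K}$. Using the linear representation $S(X)=\int_\Omega(2m-1)X(m)\,dm$ proved in the previous Lemma, together with a direct functional-derivative computation of $K(X,Y)$ in which the symmetry of $H$ (so that $H'$ is odd) turns the $X$-variation of $\int\!\!\int H(Y-X)$ into $\int_\Omega H'(X(m)-Y(m'))\,dm'$, I can identify
\[
\frac{d}{dt}\mathfrak{F}(X(t),Y(t))=\langle F_1[X,Y],\dot X\rangle_{L^2(\Omega)}+\langle F_2[X,Y],\dot Y\rangle_{L^2(\Omega)},
\]
where $F_1,F_2$ are precisely the operators in \eqref{eq:F1}--\eqref{eq:F2}.

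Next I differentiate the kinetic part along \eqref{projsys}, obtaining
\[
\tfrac12\frac{d}{dt}\bigl(\|V\|_{L^2}^2+\|W\|_{L^2}^2\bigr)=-\langle V,\mathsf{P}_{\mathcal{H}_X}(F_1)\rangle_{L^2}-\langle W,\mathsf{P}_{\mathcal{H}_Y}(F_2)\rangle_{L^2}-\sigma\bigl(\|V\|_{L^2}^2+\|W\|_{L^2}^2\bigr).
\]
The key algebraic observation is that $\mathsf{P}_{\mathcal{H}_X}$ and $\mathsf{P}_{\mathcal{H}_Y}$ are orthogonal projections in $L^2(\Omega)$ and therefore self-adjoint; combined with the first two equations of \eqref{projsys}, this gives $\langle V,\mathsf{P}_{\mathcal{H}_X}(F_1)\rangle=\langle\mathsf{P}_{\mathcal{H}_X}(V),F_1\rangle=\langle\dot X,F_1\rangle$ and similarly for $Y$. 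Summing the two derivatives produces the clean identity
\[
\frac{d}{dt}E(t)=-\sigma\bigl(\|V(t)\|_{L^2}^2+\|W(t)\|_{L^2}^2\bigr)\leq 0
\]
on each smoothness interval $(t_{k-1},t_k)$.

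It remains to handle the collision times $t_k$, where the construction in Proposition \ref{th:ex_newtonian} enforces
\[
X(t_k+)=X(t_k-),\qquad V(t_k+)=\mathsf{P}_{\mathcal{H}_{X(t_k)}}(V(t_k-)),
\]
and analogously for $(Y,W)$, see \eqref{collision}. Since $X$ and $Y$ are continuous through $t_k$, the potential energy $\mathfrak{F}(X,Y)$ does not jump, while the kinetic energy can only decrease because orthogonal projections are $L^2$-contractions: $\|V(t_k+)\|_{L^2}\leq\|V(t_k-)\|_{L^2}$ and $\|W(t_k+)\|_{L^2}\leq\|W(t_k-)\|_{L^2}$. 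Hence $E(t_k+)\leq E(t_k-)$, and combining this with the dissipative inequality on each regular interval yields monotonicity of $E$ on all of $[0,\infty)$, which is the desired bound.

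The only step that really needs care is the chain rule for $\mathfrak{F}$: I expect the cleanest way is to use the linearization $S(X)=\int(2m-1)X\,dm$ on $\mathcal{K}$ (so that the potentially singular self-interaction contribution becomes trivially smooth along $t$) and to invoke the $C^1$-regularity of $H$, $A_\rho$ and $A_\eta$ plus the Lipschitz regularity of $t\mapsto(X(t),Y(t))$ granted by the generalised-solution framework of Definition \ref{def:generalised}. Once the chain rule is justified on each $(t_{k-1},t_k)$, the projection identity $\langle V,\mathsf{P}_{\mathcal{H}_X}F_1\rangle=\langle\dot X,F_1\rangle$ does the rest and the proof is almost immediate.
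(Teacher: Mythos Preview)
Your proposal is correct and follows essentially the same route as the paper. Both arguments differentiate the total energy, identify $\tfrac{d}{dt}\mathfrak{F}(X,Y)=\langle F_1,\dot X\rangle+\langle F_2,\dot Y\rangle$ via the linear representation $S(X)=\int(2m-1)X$ and the chain rule for $K$, and then use the self-adjointness of the projections (which the paper phrases as the pair of orthogonality identities $\int \mathsf{P}_{\mathcal{H}_X}(V)\big(\mathsf{P}_{\mathcal{H}_X}(F_1)-F_1\big)=0=\int \mathsf{P}_{\mathcal{H}_X}(F_1)\big(\mathsf{P}_{\mathcal{H}_X}(V)-V\big)$) to cancel the cross terms and arrive at $\tfrac{d}{dt}E=-\sigma(\|V\|^2+\|W\|^2)$. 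The only cosmetic difference is that you treat the collision times $t_k$ explicitly via the $L^2$-contractivity of the projection, whereas the paper works with the right derivative $\tfrac{d^+}{dt}$ throughout; your version is arguably cleaner on this point.
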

\begin{proof}The proof is based on an estimate of the following total energy
\[
  \mathfrak{E}(X,Y,V,W)=\frac{1}{2}\int_\Omega \abs{V}^2\,dm+\frac{1}{2}\int_\Omega \abs{W}^2\,dm+\mathfrak{F}(X,Y).
\]
Considering $(X,Y,V,W)$ generalised solution to \eqref{projsys}, we have 
\begin{align}\label{eq:lemma4_1}
    \begin{aligned}
    \frac{d^+}{dt}\mathfrak{E}(X,Y,V,W)  
    %&= \int_\Omega V [-\sigma V-\mathsf{P}_{\mathcal{H}_X} (F_1)]\,dm+\int_\Omega W[-\sigma W-\mathsf{P}_{\mathcal{H}_Y}(F_2)]\,dm  \\&+\int_\Omega (2m-1)[\mathsf{P}_{\mathcal{H}_X}(V)+\mathsf{P}_{\mathcal{H}_Y}(W)]\,dm  \\& +\int_\Omega\int_\Omega \big[\mathsf{P}_{\mathcal{H}_X} (V)-\mathsf{P}_{\mathcal{H}_Y}(W)\big] H'\big( X(m)-Y(m')\big)\,dm'\,dm  \\& +\int_\Omega A'(X)\mathsf{P}_{\mathcal{H}_X}(V)\,dm+\int_\Omega B'(Y)\mathsf{P}_{\mathcal{H}_Y}(W)\,dm  \\    
     =& -\sigma \int_\Omega \big(\abs{V}^2+\abs{W}^2\big)\,dm\\&-\int_\Omega V \mathsf{P}_{\mathcal{H}_X}(F_1)\,dm-\int_\Omega W\mathsf{P}_{\mathcal{H}_Y}(F_2)\,dm  \\& + \int_\Omega \mathsf{P}_{\mathcal{H}_X}(V) \big[ 2m-1+\int_\Omega H'\big(X(m)-Y(m')\big)\,dm'+A_\rho'(X) \big]\,dm  \\& + \int_\Omega \mathsf{P}_{\mathcal{H}_Y}(W) \big[ 2m-1+\int_\Omega H'\big(Y(m)-X(m')\big)\,dm'+A_\eta'(Y) \big]\,dm. 
    \end{aligned}
\end{align}
Thanks to the definitions of $F_1[X,Y]$ and $F_2[X,Y]$ in \eqref{eq:F1}-\eqref{eq:F2}, we obtain that  
    \begin{align}\label{eq:lemma4_2}
    \begin{aligned}
        \frac{d^+}{dt}\mathfrak{E}(X,Y,V,W)=& -\sigma \int_\Omega \big(\abs{V}^2+\abs{W}^2\big)\,dm-\int_	\Omega V \mathsf{P}_{\mathcal{H}_X}(F_1)\,dm\,dm  \\& -\int_\Omega W\mathsf{P}_{\mathcal{H}_Y}(F_2)+\int_\Omega \mathsf{P}_{\mathcal{H}_X} (V) F_1 \,dm +\int_\Omega \mathsf{P}_{\mathcal{H}_Y} (W) F_2 \,dm .
    \end{aligned}
\end{align}
By definition of the projection operator in \eqref{projoper},
\[ \int_\Omega\mathsf{P}_{\mathcal{H}_X}(V)\big(\mathsf{P}_{\mathcal{H}_X}(F_1[X,Y])-F_1[X,Y]\big)\,dm=0=\int_\Omega \mathsf{P}_{\mathcal{H}_X}(F_1[X,Y])\big(\mathsf{P}_{\mathcal{H}_X}(V)-V\big)\,dm, \]
then
\[  \int_\Omega \big( F_1[X,Y]\mathsf{P}_{\mathcal{H}_X}(V)-V\mathsf{P}_{\mathcal{H}_X}(F_1[X,Y]) \big)\,dm=0,\]
and similarly 
\[ \int_\Omega \big( F_2[X,Y]\mathsf{P}_{\mathcal{H}_Y}(W)-W\mathsf{P}_{\mathcal{H}_Y}( F_2[X,Y]) \big) \,dm=0,
\]
therefore \eqref{eq:lemma4_2}  reduces to
\begin{equation}
\label{eq:bounded}
    \frac{d^+}{dt}\mathfrak{E}(X,Y,V,W)=-\sigma\int_\Omega\abs{V}^2\,dm-\sigma\int_\Omega\abs{W}^2\,dm\leq 0,
\end{equation}
from which we can easily deduce the uniform estimate \eqref{eq:toten}.
\end{proof}

We can now provide the proof of  Theorem \ref{th:newtonian}.

\begin{proof} [Proof of Theorem \ref{th:newtonian}]
Integrating in time the equation \eqref{eq:bounded}, we find that for all $T> 0$
\begin{align*}
   &\mathfrak{E}(X,Y,V,W) \big|_{t=T}+\sigma \int_0^T \int_\Omega \big(\abs{V}^2+\abs{W}^2\big)\,dm\,dt  =\mathfrak{E}(X,Y,V,W)\big|_{t=0}.
\end{align*}
Thanks to the non-negativity of the cross-potential $H$,  assumption \eqref{H1} and the fact that
\begin{equation}
\label{stimaperparti}
\int_\Omega (2m-1)(X+Y)\,dm=-\int_\Omega(m^2-m)(\partial_mX+\partial_mY) \, dm\geq 0,
\end{equation}
which holds since $m^2-m\leq 0$ for $m\in (0,1)$ and $\partial_m X+\partial_mY\geq 0$ for $X,Y\in\mathcal{K},$
we obtain that
\[
    \sigma \int_0^T\int_\Omega\big( \abs{V}^2+\abs{W}^2\big)\,dm\,dt\leq -\lambda\int_\Omega \abs{X}^2\big|_{t=T}\,dm -\mu\int_\Omega\abs{Y}^2\big|_{t=T}\,dm +C_1,
\]
where $C_1$ is a constant depending on initial data. Thus
%\begin{equation}
%\label{boundedvelfinit}
 %   \int_0^T \int_\Omega \big(\abs{V}^2+\abs{W}^2\big)\,dm\,dt\leq \widetilde{C}_1 \qquad \text{for all $T> 0$}
%\end{equation} 
%where $\widetilde{C}_1$ is a constant, and considering the limit as $T\to +\infty,$ we get
\begin{equation}
\label{boundedvel}
\int_0^\infty \int_\Omega \big(\abs{V}^2+\abs{W}^2\big)\,dm\,dt < +\infty.
\end{equation} 
Computing the temporal derivative  of the $L^2$-distance between $(X,Y)$ and $(X_s,Y_s),$ we derive
\begin{align}\label{eq:th5_0}
\begin{aligned}
    \frac{1}{2} \frac{d}{dt}\int_\Omega \big( \abs{X}^2+\abs{Y}^2\big)\,dm=&\int_\Omega X \mathsf{P}_{\mathcal{H}_X}(V)\,dm+\int_\Omega Y \mathsf{P}_{\mathcal{H}_Y}(W)\,dm  \\ =&\int_\Omega X\big( \mathsf{P}_{\mathcal{H}_X}(V)-V\big)\,dm+\int_\Omega Y\big( \mathsf{P}_{\mathcal{H}_Y}(W)-W\big)\,dm \\ &+\int_\Omega (XV+YW)\,dm  \\ =&\int_\Omega ( XV+YW )\,dm. 
    \end{aligned}
\end{align}
In order to control the last term in the chain of equality above we compute
\begin{align}\label{eq:th5_1}
\begin{aligned}
    \frac{d}{dt}\int_\Omega (XV+YW)\,dm = & \int_\Omega X \big[ -\sigma V-\mathsf{P}_{\mathcal{H}_X}(F_1)\big]\,dm+\int_\Omega V\mathsf{P}_{\mathcal{H}_X}(V)\,dm  \\& +\int_\Omega Y\big[ -\sigma W -\mathsf{P}_{\mathcal{H}_Y}(F_2)\big]\,dm + \int_\Omega W\mathsf{P}_{\mathcal{H}_Y}(W)\,dm.
\end{aligned}
\end{align}
Using the definitions of $F_1$ and $F_2$ in \eqref{eq:F1} and \eqref{eq:F2} and the property for the projection operator we have
\begin{align}\label{eq:th5_2}
\begin{aligned}
     \frac{d}{dt}\int_\Omega (XV+YW)\,dm
%  &\int_\Omega \big( -\sigma XV-\sigma YW+\abs{V}^2+\abs{W}^2 -XF_1-YF_2\big)\,dm \nonumber \\& =\int_\Omega \big(-\sigma XV-\sigma YW +\abs{V}^2+\abs{W}^2\big)\,dm \nonumber \\& -\int_\OmegaX\big[ 2m-1+\int_\Omega H'\big( X(m)-Y(m')\big)\,dm'+A'(X)\big]\,dm \nonumber \\& -\int_\Omega Y\big[ 2m-1+\int_\Omega H'\big( Y(m)-X(m')\big)\,dm'+B'(Y)\big]\,dm \nonumber \\
   =&\int_\Omega \big( -\sigma XV-\sigma YW +\abs{V}^2+\abs{W}^2\big)\,dm -\int_\Omega (2m-1)(X+Y)\,dm  \\& -\int_\Omega\int_\Omega X(m) H'\big(X(m)-Y(m')\big)\,dm'\,dm \\& -\int_\Omega\int_\Omega Y(m) H'\big(Y(m)-X(m')\big)\,dm'\,dm  \\& -\int_\Omega XA_\rho'(X)\,dm -\int_\Omega Y A_\eta'(Y)\,dm. 
\end{aligned}
\end{align}
Using assumption \eqref{XX} we can bound the terms involving the cross-interaction potential $H$ as follows
\begin{align*}
    & -\int_\Omega \int_\Omega X(m)H'\big( X(m)-Y(m')\big)\,dm'\,dm-\int_\Omega\int_\Omega Y(m)H'\big(Y(m)-X(m')\big)\,dm'\,dm \\
 %   & =-\int_\Omega\int_\Omega X(m)H'\big(X(m)-Y(m')\big)\,dm'\,dm+\int_\Omega\int_\Omega Y(m')H'\big( X(m)-Y(m')\big)\,dm'\,dm\\
    & =-\int_\Omega\int_\Omega H'\big( X(m)-Y(m')\big) \big(X(m)-Y(m')\big)\,dm'\,dm\leq 0,
\end{align*}
thus, using assumption \eqref{H2} and \eqref{stimaperparti}, \eqref{eq:th5_2} can be bounded from above by 
\begin{equation}
\label{eq:th5_3}
\frac{d}{dt}\int_\Omega (XV+YW)\,dm\leq\int_\Omega \big( -\sigma XV-\sigma YW +\abs{V}^2+\abs{W}^2-\alpha\abs{X}^2-\beta\abs{Y}^2 \big)\,dm.
\end{equation}
Note that for any $A > 0$ %$\big( XA+\frac{V}{2A}\big)^2\geq 0 $ holds ,
 we  have $-XV\leq X^2A^2+\frac{V^2}{4A^2}$. Then,  applying this inequality to $-\sigma XV$ and $-\sigma YW$, we obtain the following inequality holding for any $A_1, A_2>0$:
\begin{equation}
\label{eq:controllostima}
\begin{aligned}
     \int_\Omega & \big( -\sigma XV-\sigma YW +\abs{V}^2+\abs{W}^2-\alpha\abs{X}^2-\beta\abs{Y}^2 \big)\,dm \\  \leq &  -\int_\Omega \abs{X}^2 \big( \alpha -\sigma A_1^2\big)\,dm -\int_\Omega \abs{Y}^2\big( \beta-\sigma A_2^2\big)\,dm \\& +\int_\Omega \abs{V}^2 \big( 1+\frac{\sigma}{4A_1^2}\big)\,dm +\int_\Omega \abs{W}^2\big( 1+\frac{\sigma}{4A_2^2}\big)\,dm.
\end{aligned}
\end{equation}
By taking sufficiently small $A_1$ and $A_2$, we have that \eqref{eq:th5_3} is bounded from above by
\begin{equation} 
\label{eq:th5_4}
\frac{d}{dt}\int_\Omega (XV+YW)\,dm\leq-\overline{C}_1 \int_\Omega \big( \abs{X}^2+\abs{Y}^2\big)\,dm+\overline{C}_2\int_\Omega \big( \abs{V}^2+\abs{W}^2 \big)\,dm
\end{equation}
for some constants $\overline{C}_1,\overline{C}_2>0.$
Putting together  estimates \eqref{eq:th5_0} and \eqref{eq:th5_4}, we have that
\begin{align}\label{eq:th5_5}
\begin{aligned}
    & \frac{d}{dt}\int_\Omega \big( \abs{X}^2+\abs{Y}^2+XV+YW\big)\,dm  \\& \leq 2\int_\Omega \big( XV+YW \big) \,dm-\overline{C}_1\int_\Omega \big( \abs{X}^2+\abs{Y}^2\big)\,dm+\overline{C}_2\int_\Omega \big( \abs{V}^2+\abs{W}^2\big)\,dm. 
    \end{aligned}
\end{align}
Integrating in time inequality \eqref{eq:th5_5}, for all $T> 0$ we obtain
\begin{align*}
    \int_\Omega & \big( \abs{X}^2+\abs{Y}^2+XV+YW\big)\,dm\big|_{t=T}-\int_\Omega  \big( \abs{X}^2+\abs{Y}^2+XV+YW\big)\,dm\big|_{t=0} \\  \leq & 2\int_0^T\int_\Omega \big( XV+YW\big)\,dm\,dt-\overline{C}_1\int_0^T\int_\Omega \big( \abs{X}^2+\abs{Y}^2\big)\,dm\,dt \\& +\overline{C}_2\int_0^T\int_\Omega \big( \abs{V}^2+\abs{W}^2\big)\,dm\,dt,
\end{align*}
thus
\begin{align*}
\overline{C}_1\int_0^T\int_\Omega \big( \abs{X}^2+\abs{Y}^2 \big)\,dm\,dt  \leq & \overline{C}_2\int_0^T\int_\Omega \big( \abs{V}^2+\abs{W}^2 \big)\,dm\,dt \\ & + 2\int_0^T\int_\Omega \big( XV+YW\big)\,dm\,dt \\ & -\int_\Omega \big( \abs{X}^2+\abs{Y}^2+XV+YW \big)\,dm \big|_{t=T}+ C_2,
 \end{align*}
where $C_2$ is a constant which depends on initial data.
Proceeding as in \eqref{eq:controllostima} and using the bound in \eqref{boundedvel}, we have that 
%\[ \int_0^T \int_\Omega \big(\abs{X}^2+\abs{Y}^2\big)\,dm\leq \widetilde{C}_2 \qquad \text{for all $T> 0$.} \]
%Passing to the limit for $T \to +\infty$, we obtain
\begin{equation} 
\label{boundedpos}
\int_0^\infty \int_\Omega \big(\abs{X}^2+\abs{Y}^2\big)\,dm < +\infty.
\end{equation}
Combining  estimates \eqref{boundedvel} and \eqref{boundedpos} we get
\[ \int_0^\infty\int_\Omega \big( \abs{X}^2+\abs{Y}^2+\abs{V}^2+\abs{W}^2 \big)\,dm\,dt < +\infty,   \]
hence, there exists a sub-sequence $\{ t_k\}_k$ such that
\begin{equation}
\label{eq:subsequence}
    \int_\Omega \big( \abs{X(t_k)}^2+\abs{Y(t_k)}^2+\abs{V(t_k)}^2+\abs{W(t_k)}^2 \big)\,dm \to 0 \qquad \text{as $t_k\to +\infty.$}
\end{equation}
Since the operator $\mathfrak{F}$ defined in \eqref{eq:energyfunctional} is a monotone operator, then
\[ \mathfrak{F}(X,Y)+\frac{1}{2}\int_\Omega \abs{V}^2\,dm+\frac{1}{2}\int_\Omega\abs{W}^2\,dm\to\ell>0 \qquad\text{as $t\to+\infty$} \]
 and $\ell$ is unique. Moreover, Lemma \ref{lemma2} ensures that the operator $\mathfrak{F}$ is continuous, thus
 \[ \frac{1}{2}\int_\Omega \abs{V}^2\,dm+\frac{1}{2}\int_\Omega \abs{W}^2\,dm+\mathfrak{F}(X,Y) \big|_{t=t_k}\to \ell \qquad \text{as $t_k\to +\infty.$}
\]
Using the coercivity of the external potentials $A_\rho$ and $A_\eta$ and  \eqref{eq:subsequence}, we have that $\ell$ is necessarily zero, hence the statement holds.
\end{proof}

\section{Simulations}\label{sec:numerics}
This last section is devoted to provide some numerical  example on the behaviour of solutions to system \eqref{sistema1}. Numerical simulations will be performed  by using the discrete particle counterpart of \eqref{sistema1}, 
namely solving numerically \eqref{neweq}. We recall that the system of ODEs we are dealing with is the following 
\begin{equation}
\begin{dcases}
\label{syst:numerics}
        \dot{x}_i(t)=v_i(t), \\ \dot{y}_j(t)=w_j(t), \\
        \dot{v}_i(t)=-\sigma v_i(t)-\sum_{k\neq i}m_k\nabla K_\rho \big(x_i(t)-x_k(t)\big)- \sum_kn_k\nabla H_\rho \big(x_i(t)-y_k(t)\big), \\
        \dot{w}_j(t)=-\sigma w_j(t)-\sum_{k\neq j}n_k\nabla K_\eta \big(y_j(t)-y_k(t)\big)- \sum_km_k\nabla H_\eta \big(y_j(t)-x_k(t)\big),
\end{dcases}
\end{equation}
where $x_i$ and $y_j$ denotes the particles positions of first and second species respectively, $v_i$ and $w_j$ their velocities and $m_i$ and $n_j$ their masses, for $i=1,\ldots,N$ and $j=1,\ldots,M$. By a normalisation in the masses the total number of particles for each species, $N$ and $M$ respectively, will be modified in each of the examples below in order to highlights possible different changes in the solutions. 
\begin{figure}[ht]
\centering
%\subfloat
\includegraphics[width=.49\textwidth]{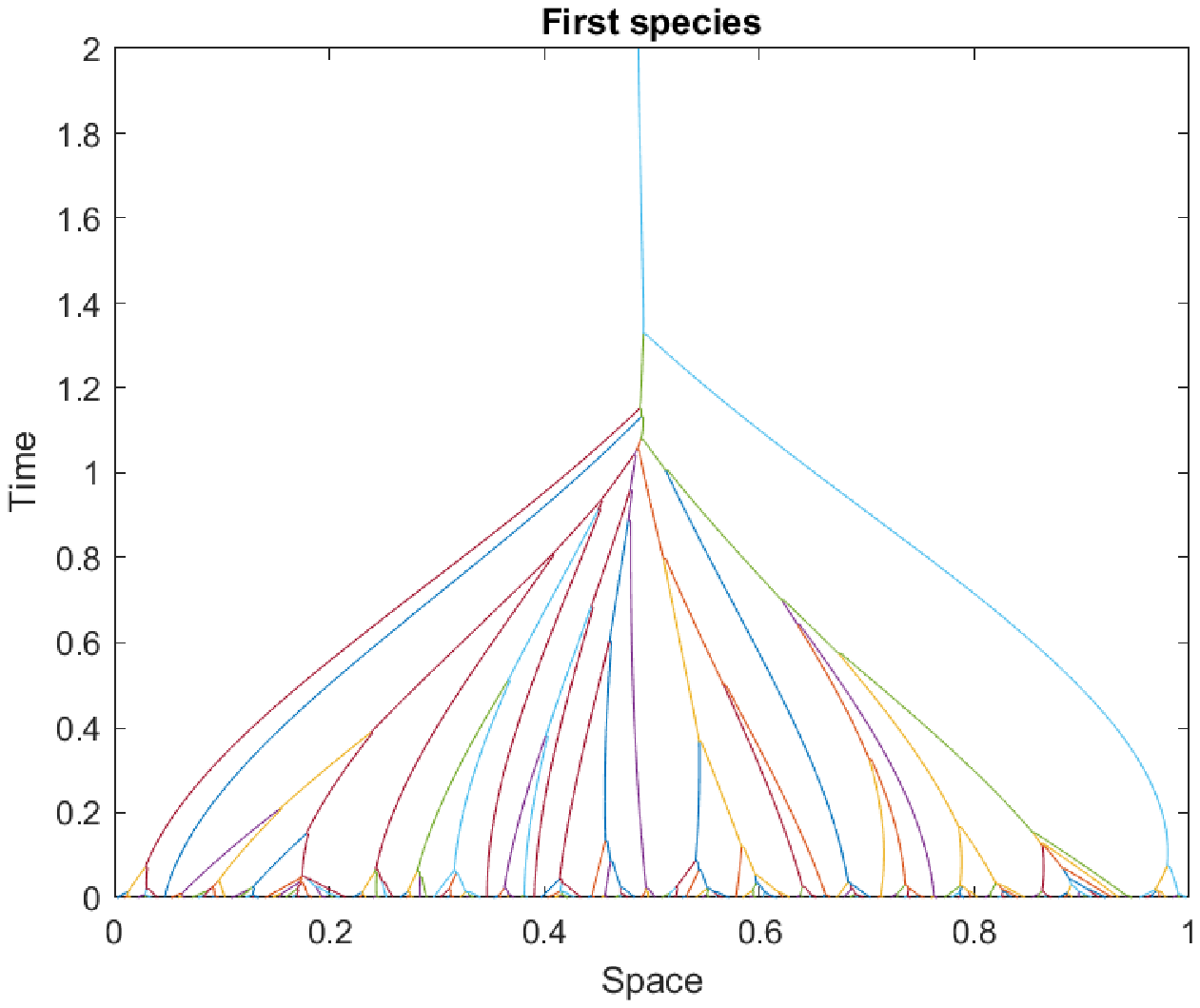}
\includegraphics[width=.49\textwidth]{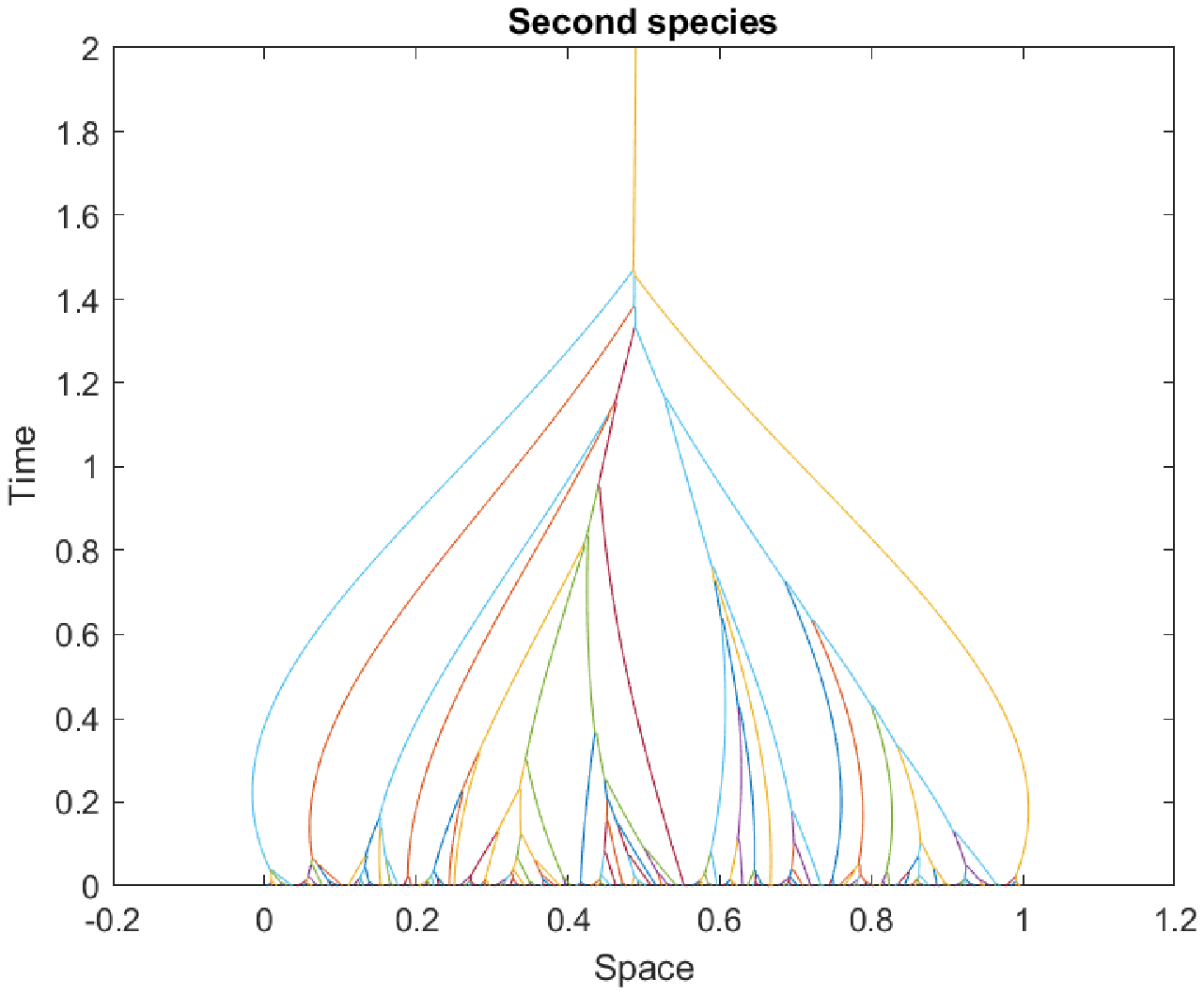}
\caption{In this first example, we fix $N=160$, and $M=150$. All the potentials are attractive. In particular they we set $K_\rho(x)=-e^{-\abs{x}^3},$ $K_\eta(x)=-e^{-\abs{x}^4},$ $H_\rho(x)=H_\eta(x)=-e^{-\abs{x}^2}.$}
\label{figure1}
\end{figure}

\begin{figure}[ht]
\centering
%\subfloat
\includegraphics[width=.49\textwidth]{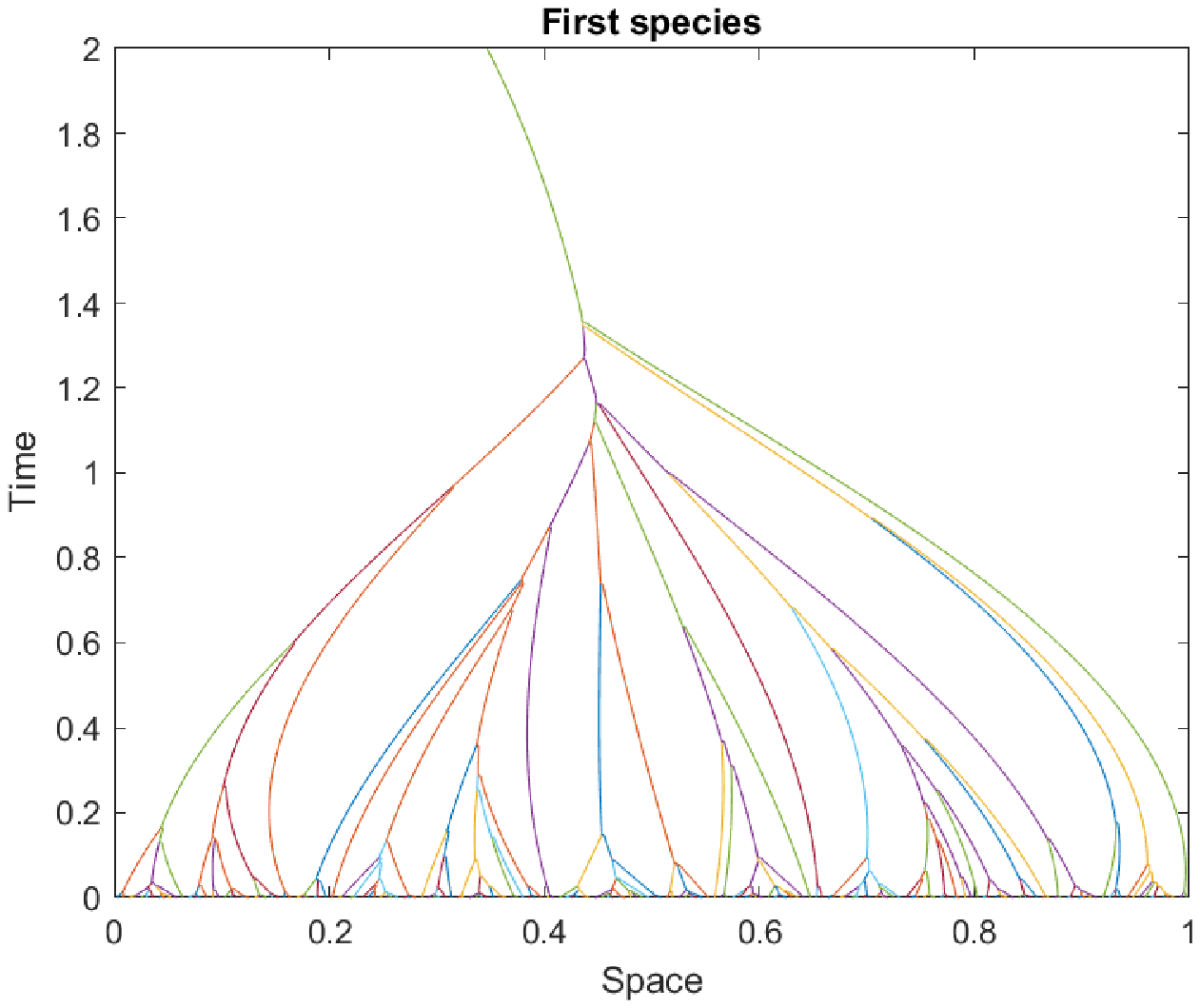}
\includegraphics[width=.49\textwidth]{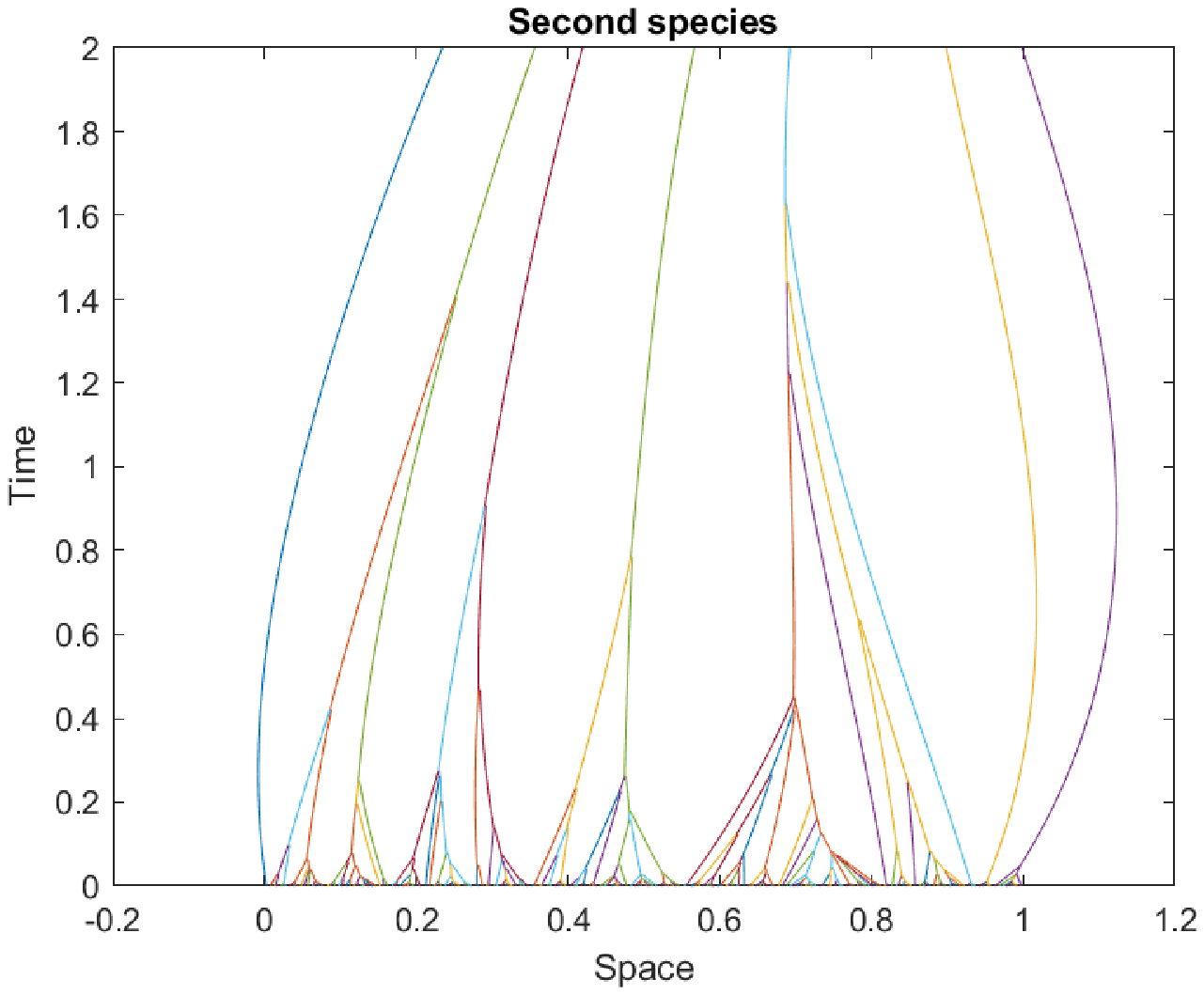}
\caption{Evolution under the action of attractive self potentials given by $K_\rho(x)=-3e^{-\abs{x}^2},$ and $K_\eta(x)=-2e^{-2\abs{x}^3},$ and repulsive cross-potentials $H_\rho(x)=-\abs{x}^2,$ $H_\eta(x)=e^{-\abs{x}^2}.$ In this example, $N=180,$ $M=200$.}
\label{figure2}
\end{figure}

\begin{figure}[ht]
\centering
%\subfloat
\includegraphics[width=.49\textwidth]{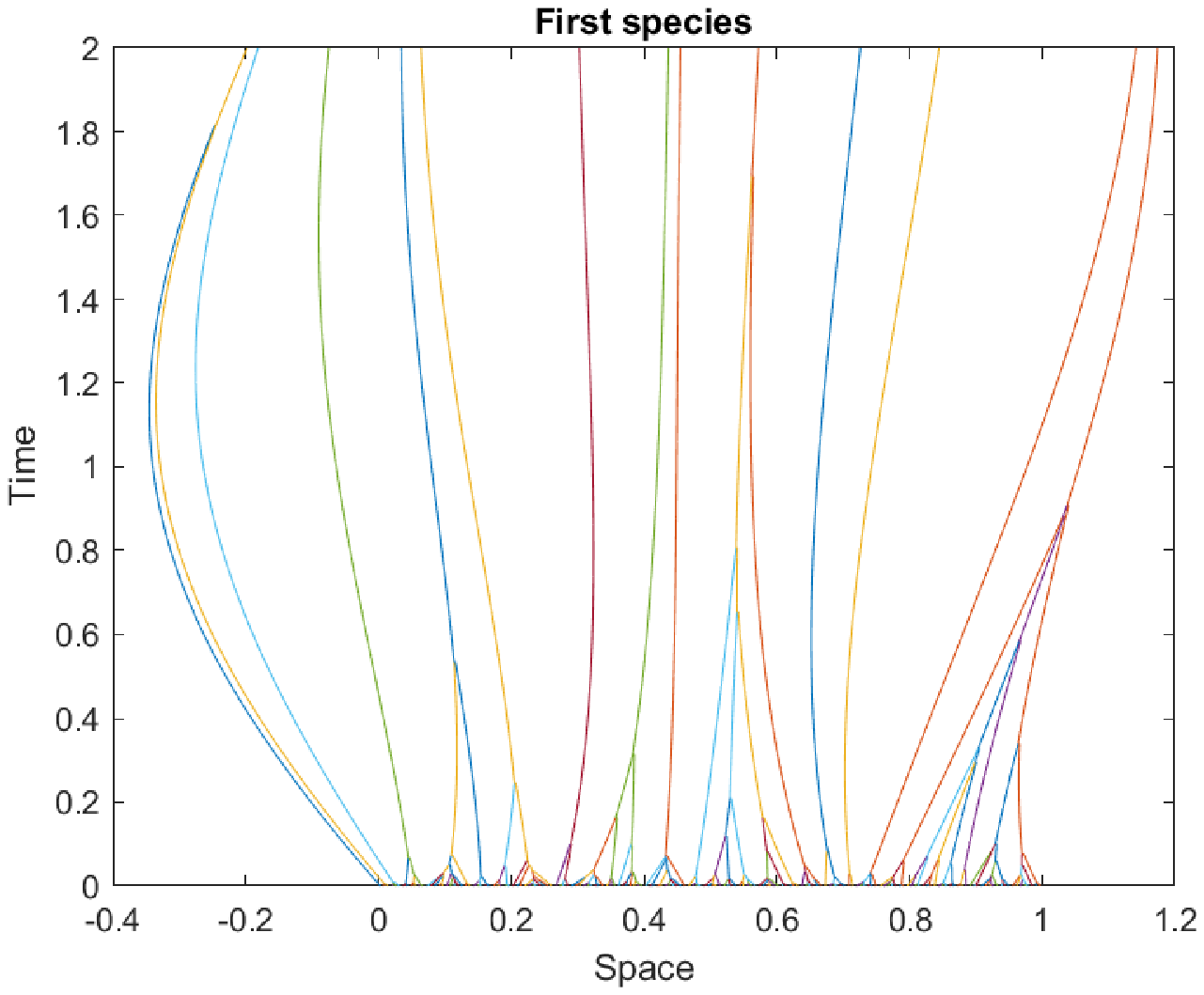}
\includegraphics[width=.49\textwidth]{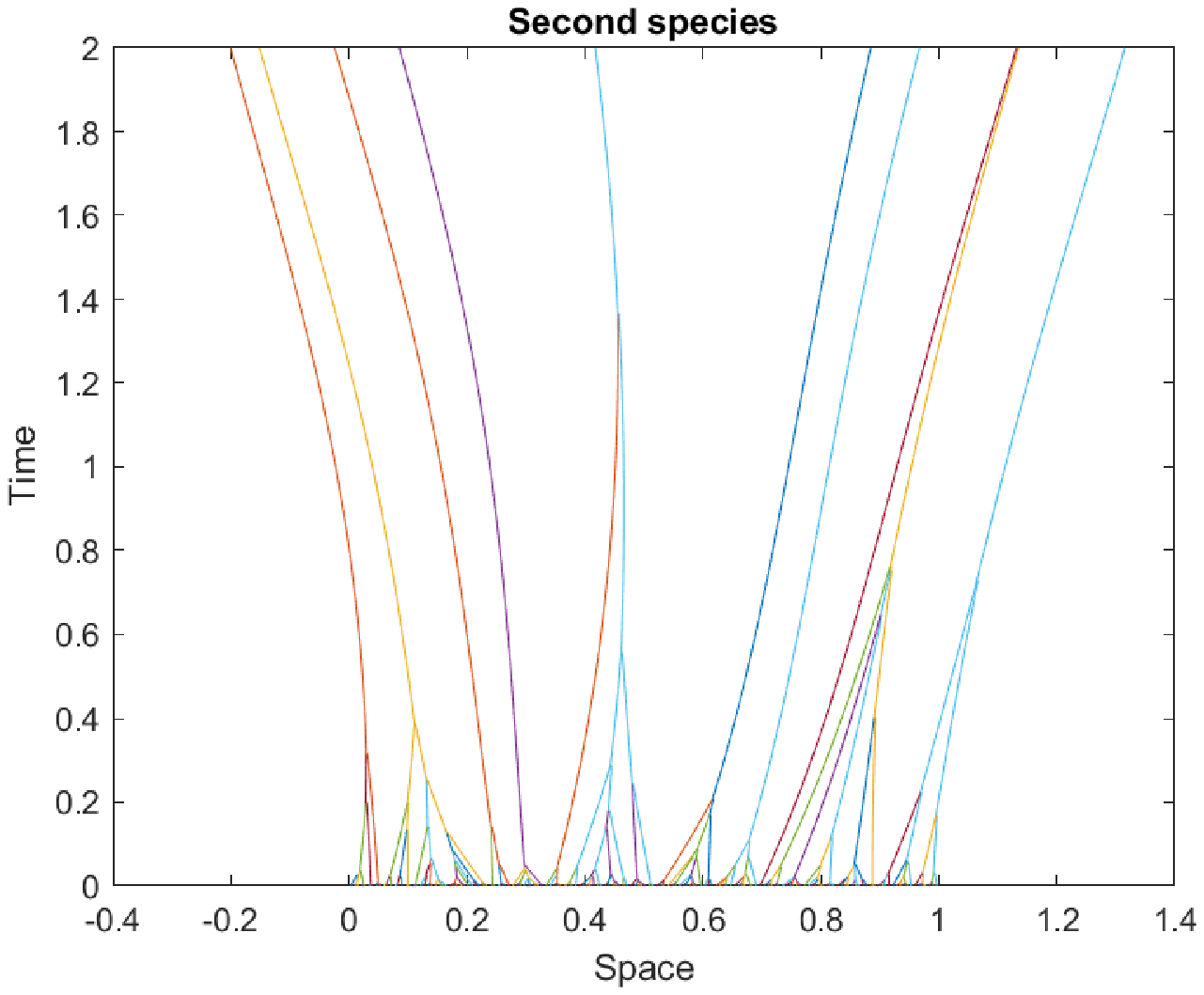}\\
%\caption{In this example, $N=170,$ $M=160,$ the self-potentials are repulsive, while the cross-potentials are attractive. In particular,  $K_\rho(x)=2e^{-\abs{x}^2},$ $K_\eta(x)=e^{-\abs{x}^3},$ $H_\rho(x)=\abs{x}^2,$ $H_\eta(x)=-e^{-3\abs{x}^2}.$}
%\label{figure3}
%\end{figure}
%\begin{figure}[ht]
%\centering
%\subfloat
\includegraphics[width=.49\textwidth]{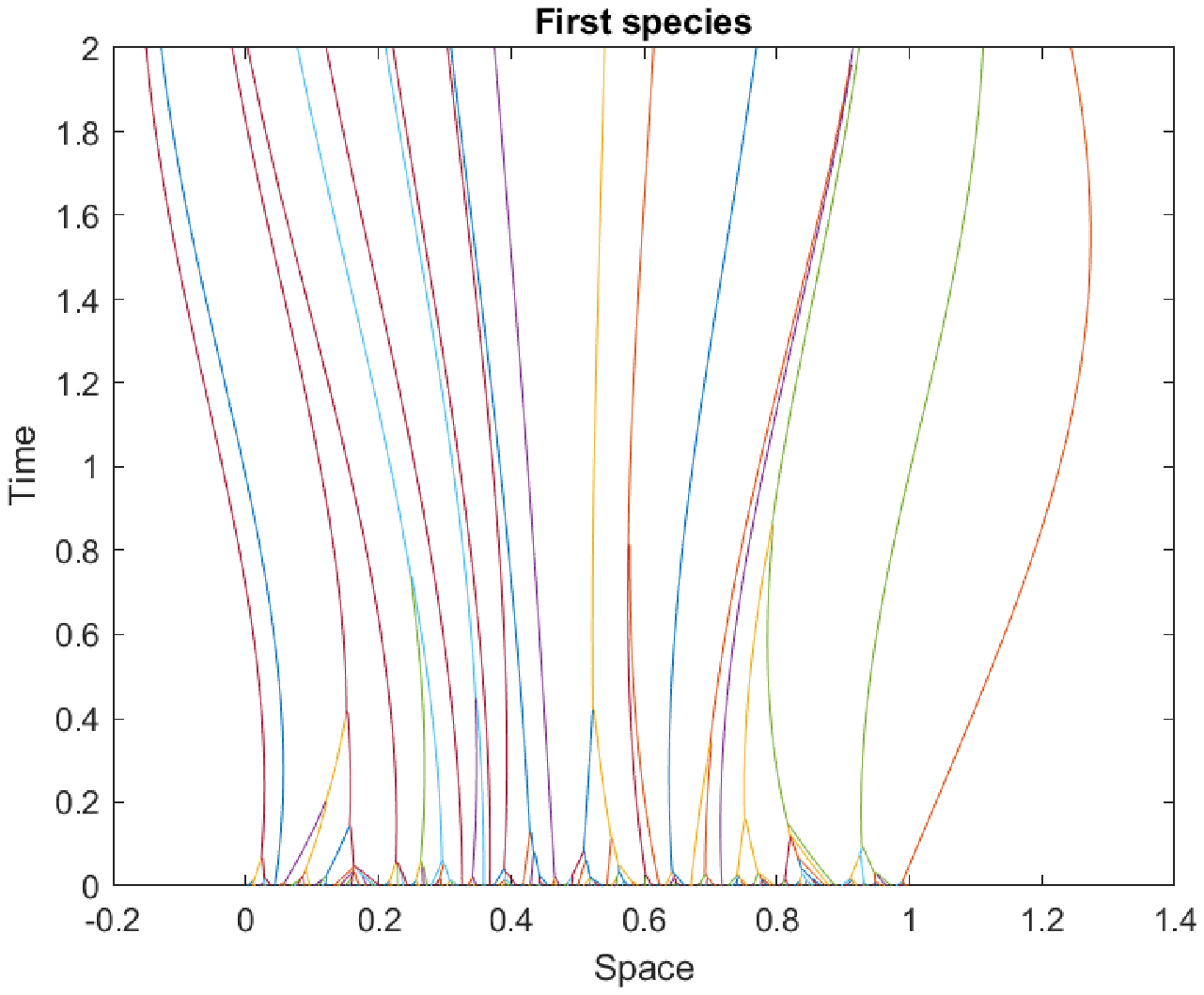}
\includegraphics[width=.49\textwidth]{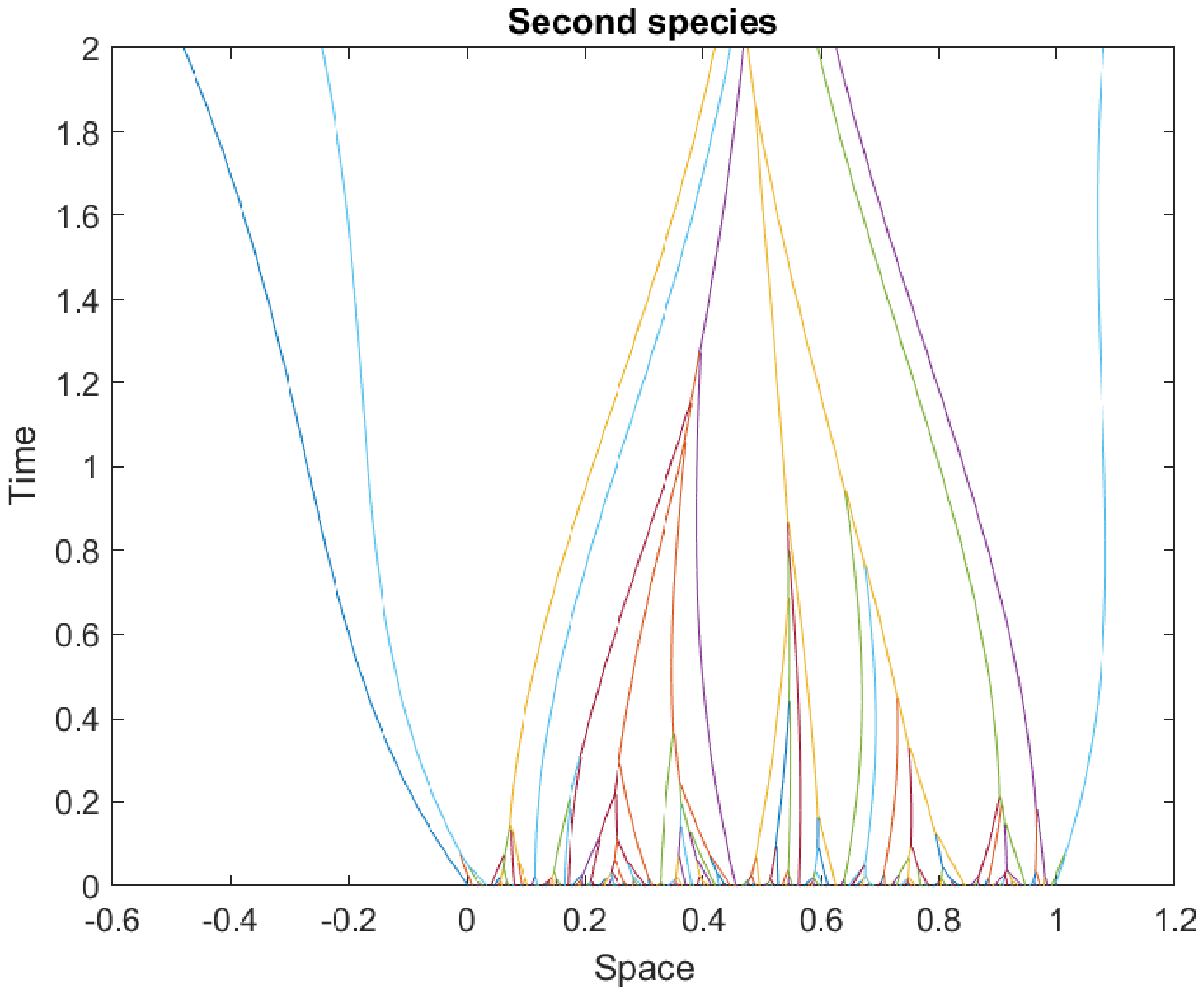}
\caption{Two possible outcomes (top and bottom) for the evolution of the system under the action of self-repulsive potentials $K_\rho(x)=2e^{-\abs{x}^2}$ and $K_\eta(x)=e^{-\abs{x}^3}$ and attractive cross-potentials $H_\rho(x)=\abs{x}^2$ and $H_\eta(x)=-e^{-3\abs{x}^2}$. In both the simulations the numbers of particles are fixed as $N=170$ and $M=160$, but initial velocities change (randomly).  }
\label{figure3}
\end{figure}

\begin{figure}[ht]
\centering
%\subfloat
\includegraphics[width=.49\textwidth]{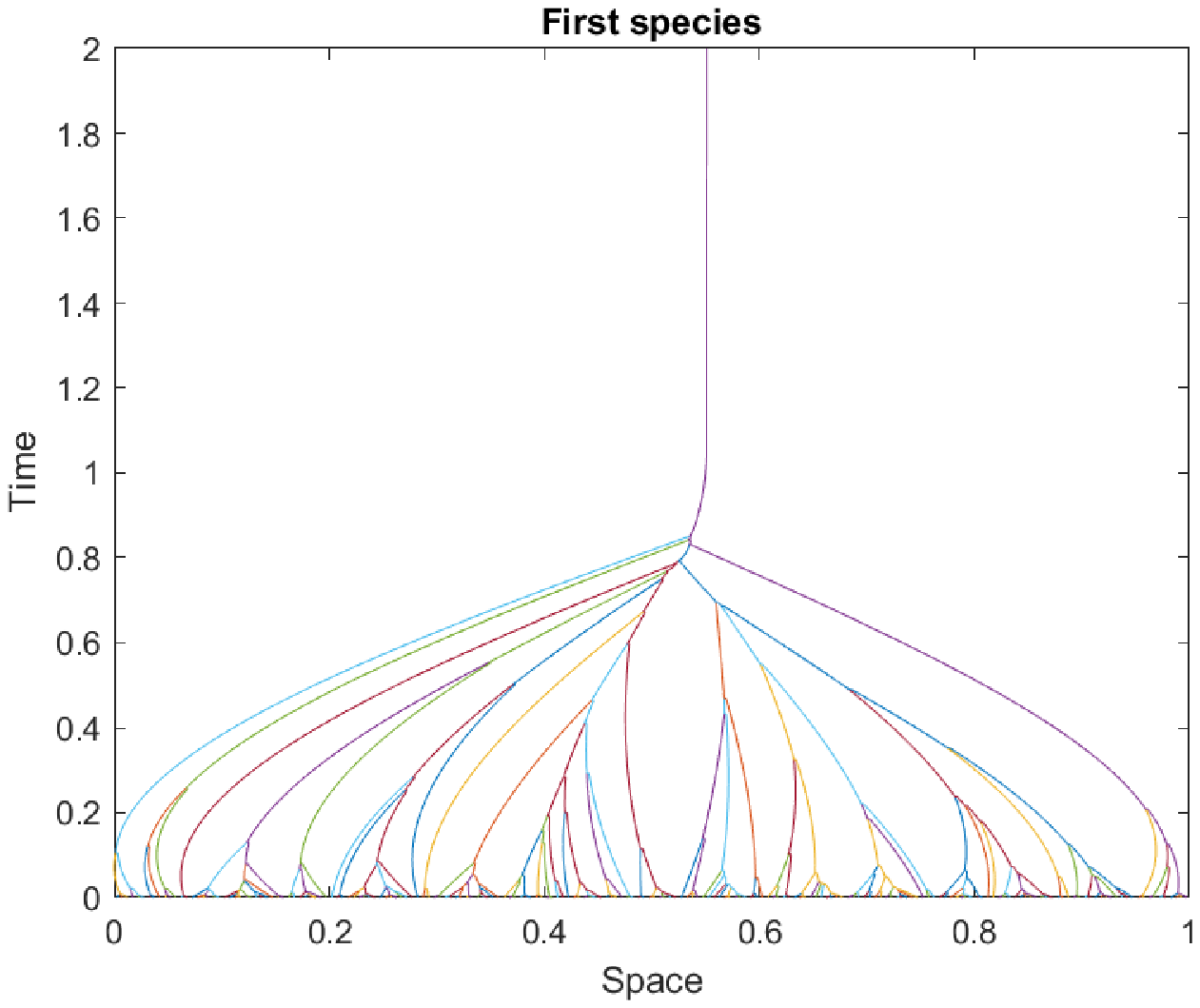}
\includegraphics[width=.49\textwidth]{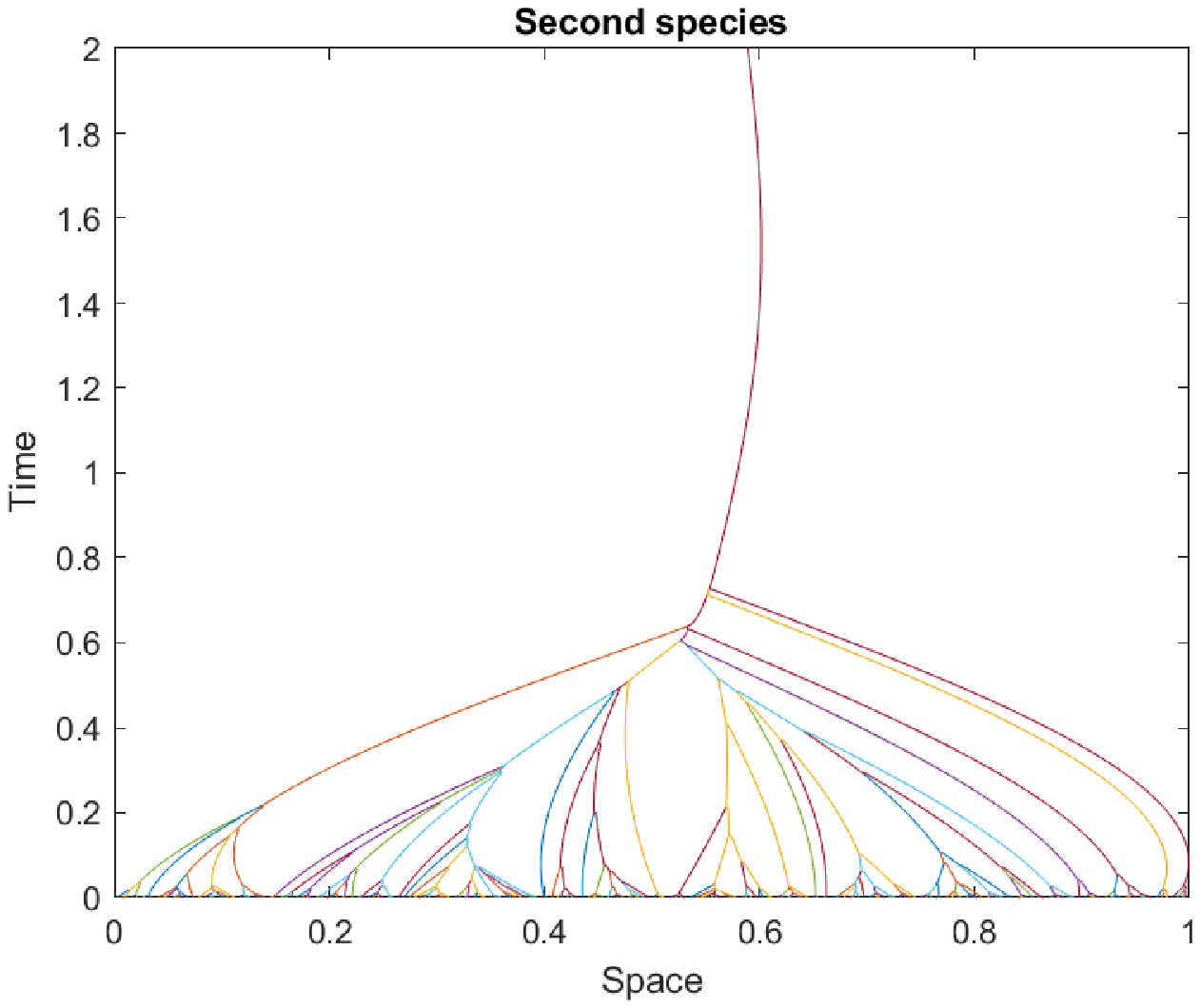}
\caption{Evolution under the action of attractive Newtonian self-potetials and attractive Gaussian cross-potentials given by $H_\rho(x)=H_\eta(x)=-e^{-\abs{x}^2}.$ The external potentials are $A_\rho (x)=\abs{x-\frac{1}{2}}^2$ and $A_\eta (x)=2\abs{x-\frac{1}{2}}^2$. In this example, $N=200$ and $M=210$.}
\label{figure:newattr}
\end{figure}

\begin{figure}[ht]
\centering
%\subfloat
\includegraphics[width=.49\textwidth]{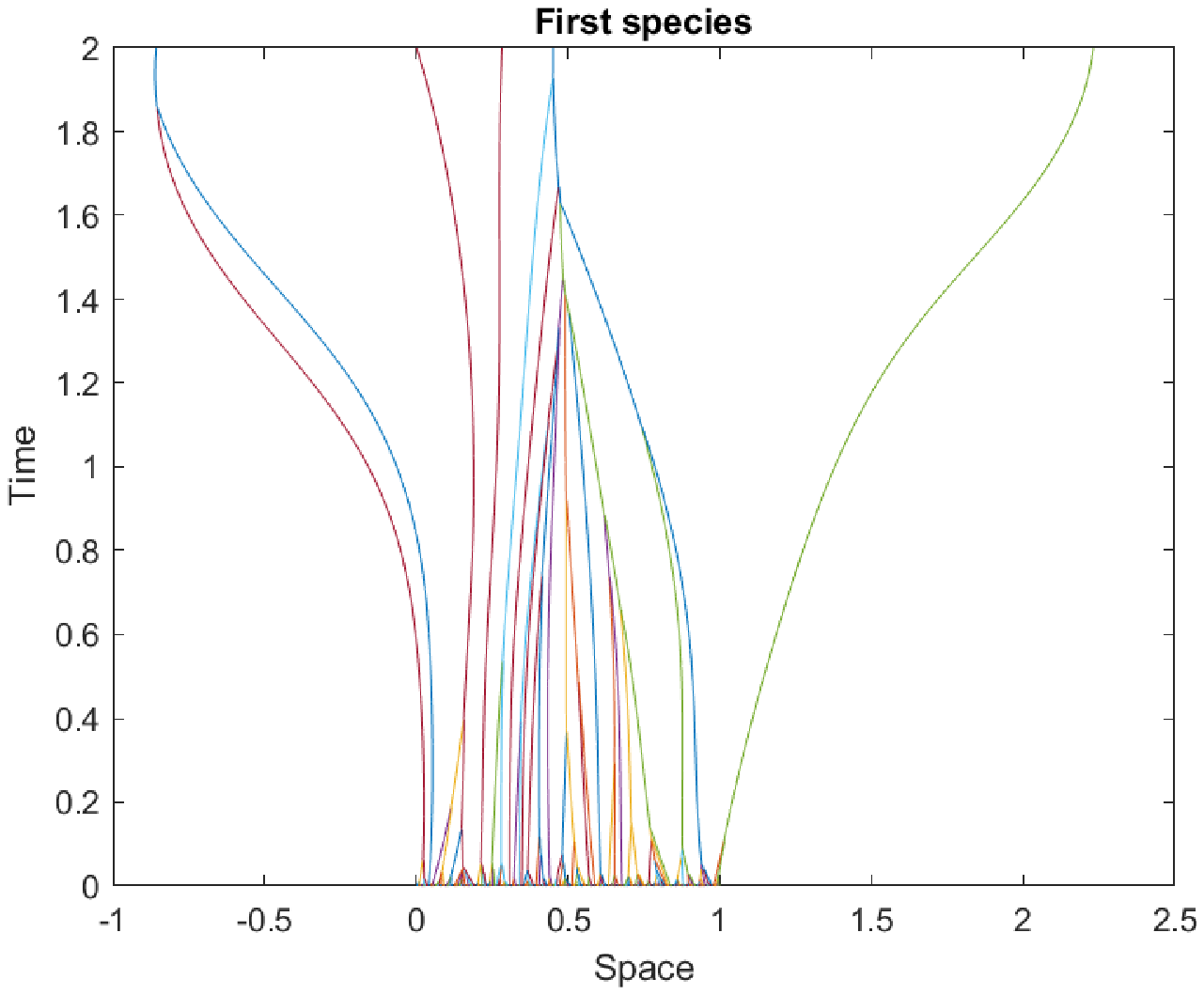}
\includegraphics[width=.49\textwidth]{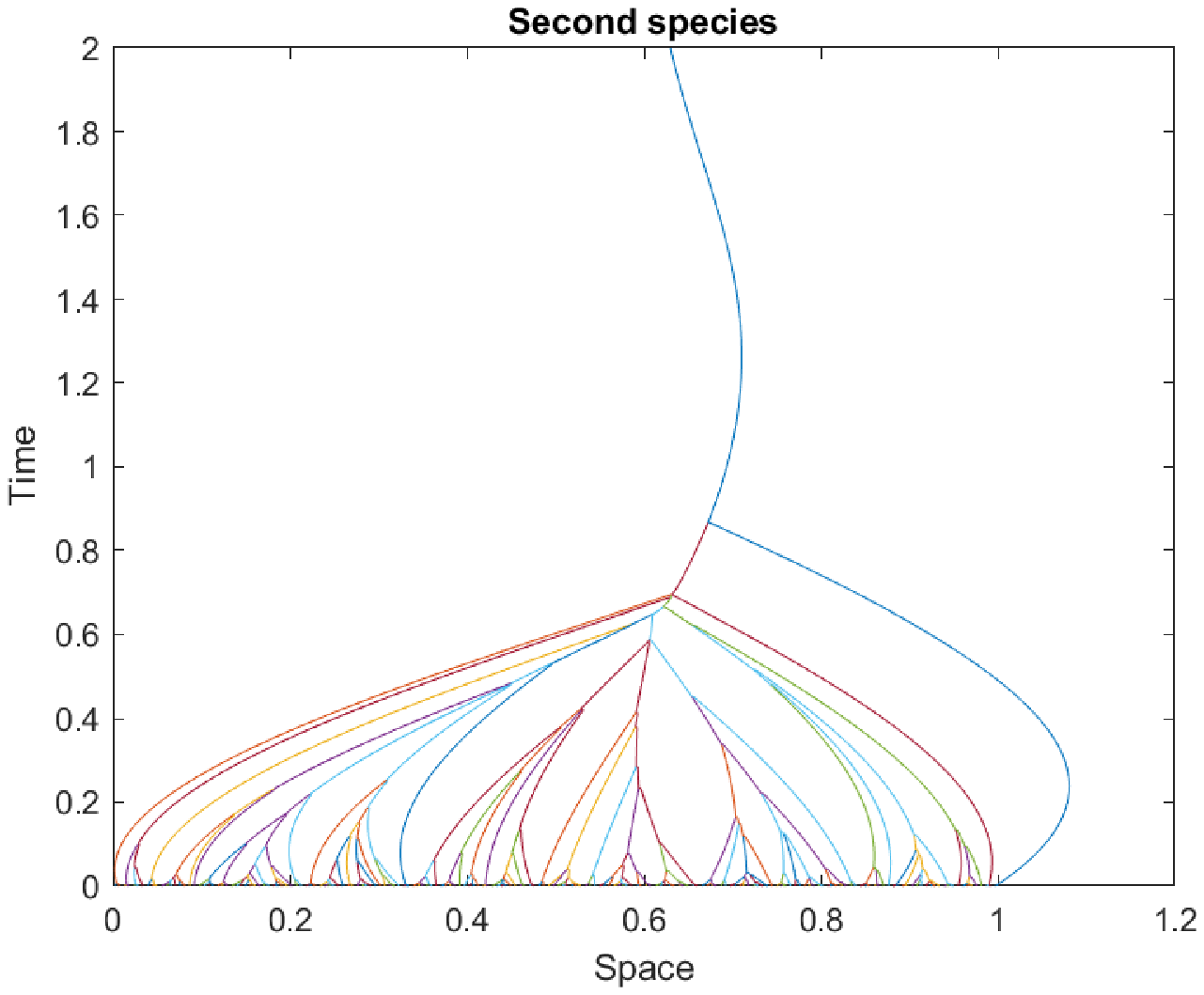}
\caption{In this example, $N=180,$ $M=190,$ the self-potentials are Newtonian attractive and the cross-potentials are equal and repulsive. In particular they are $H_\rho(x)=H_\eta(x)=3e^{-\abs{x}^4}.$ The external potentials are $A_\rho (x)=\frac{1}{2}\abs{x-\frac{1}{2}}^2$ and $A_\eta (x)=5\abs{x-\frac{1}{2}}^2.$}
\label{figure:newrep}
\end{figure}

\begin{figure}[ht]
\centering
%\subfloat
\includegraphics[width=.49\textwidth]{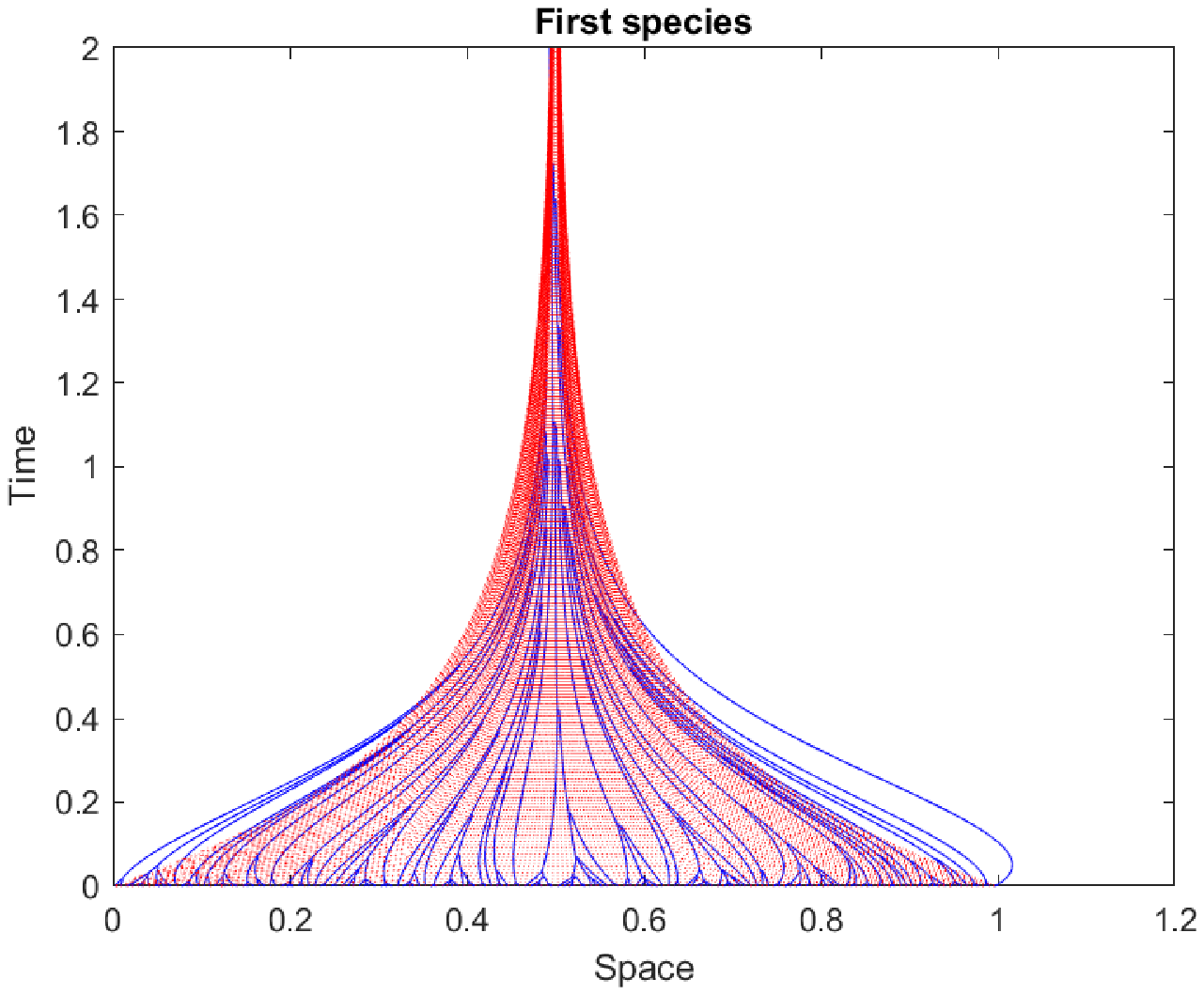}
\includegraphics[width=.49\textwidth]{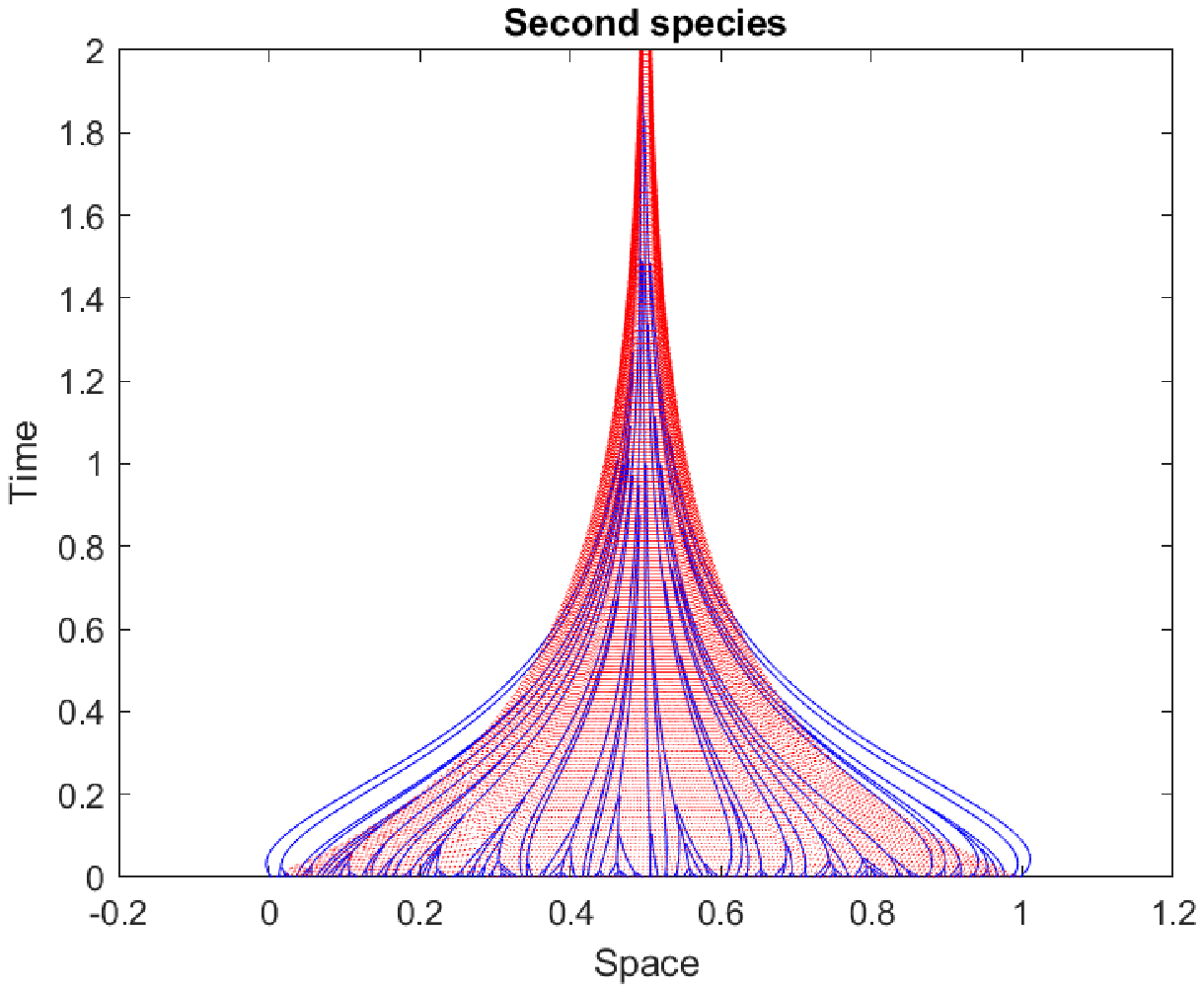}\\
\includegraphics[width=.49\textwidth]{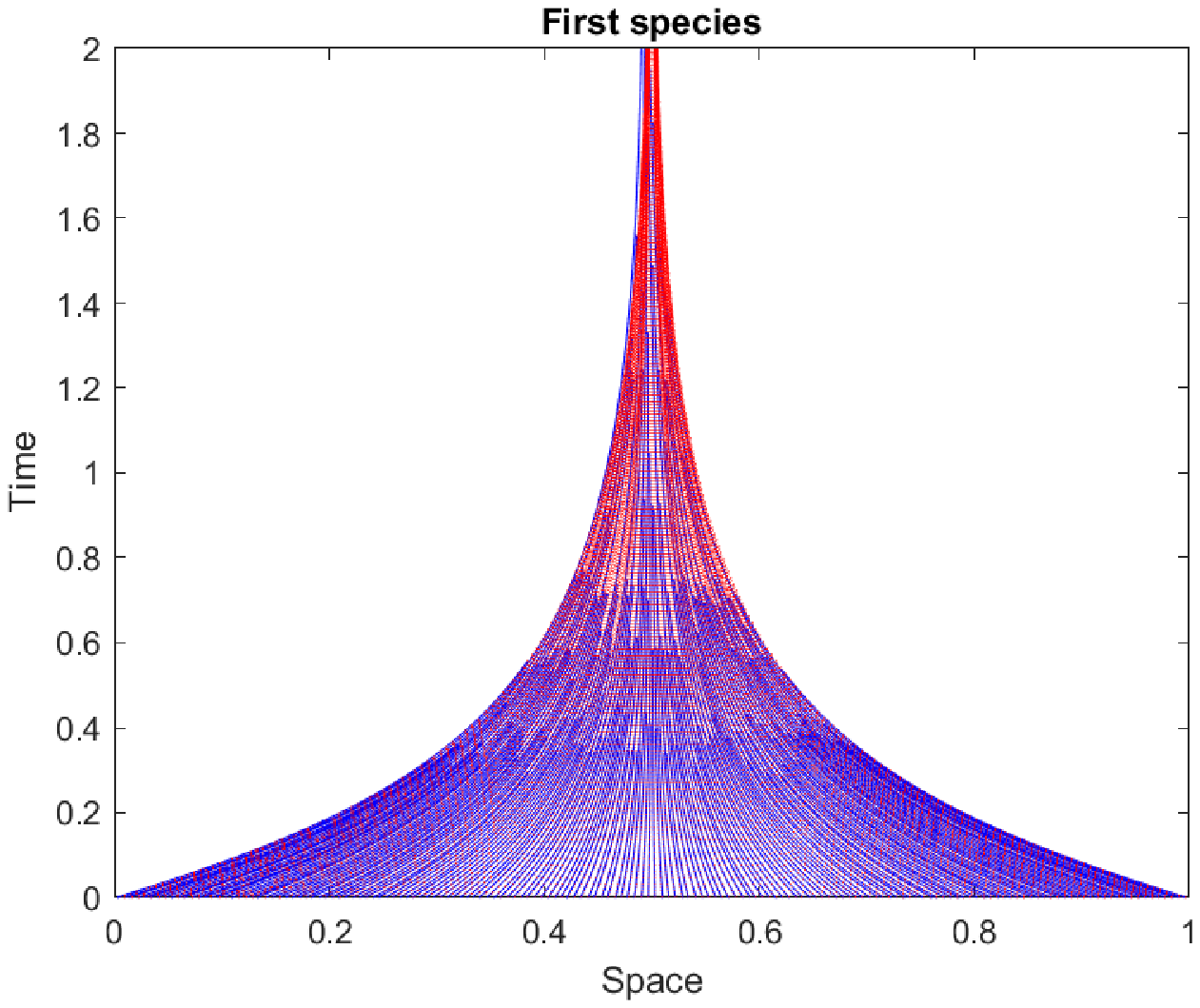}
\includegraphics[width=.49\textwidth]{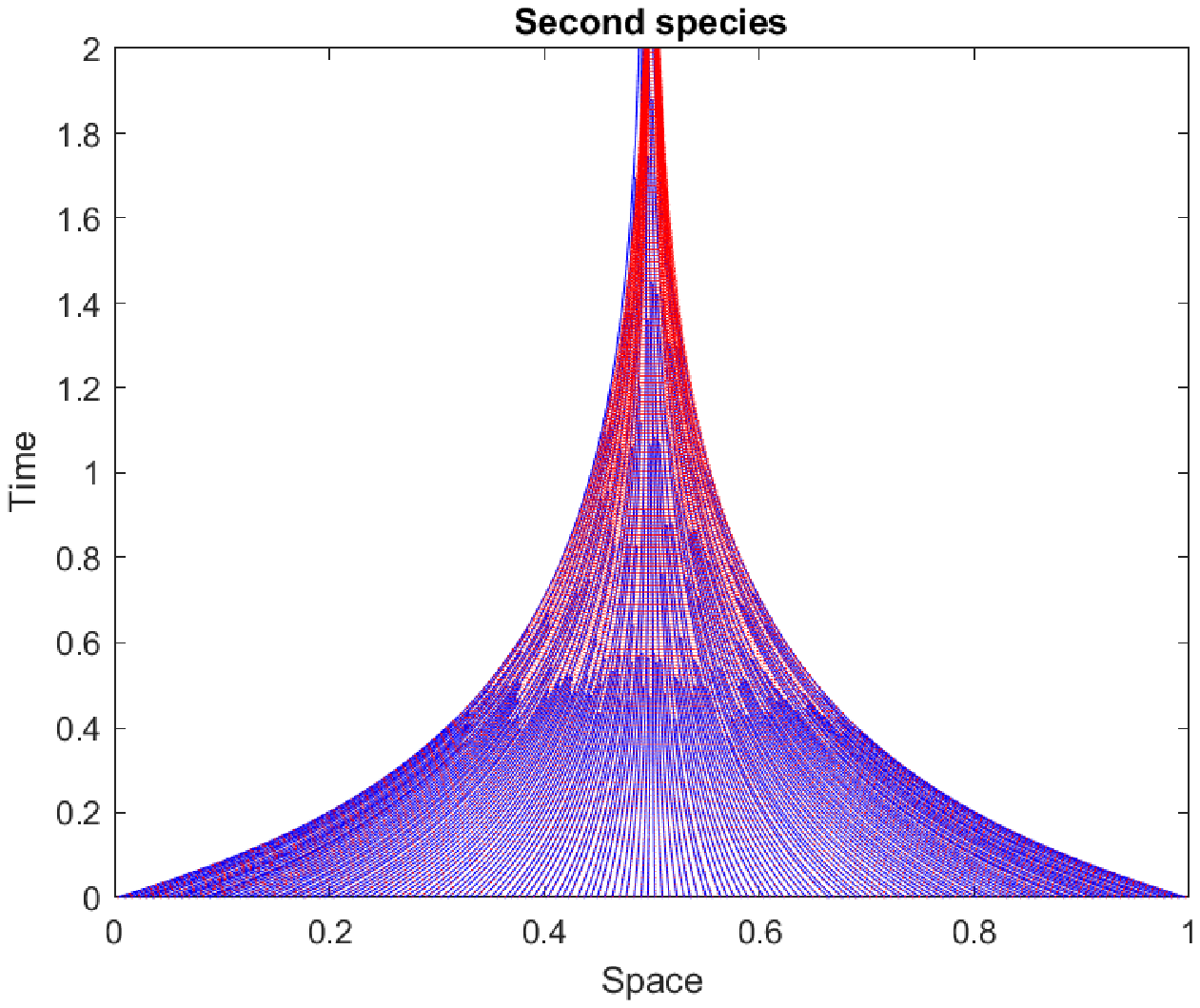}
\caption{Solutions to the  second-order system (blue in the online version) and solutions to first-order system   \eqref{eq:limitsystem} (in red) under the action of the following potentials are $K_\rho(x)=-e^{-\abs{x}^3},$ $K_\eta(x)=-e^{-\abs{x}^4},$ $H_\rho(x)=H_\eta(x)=-e^{-\abs{x}^2}$. In this simulation we set $N=160$, $M=150$ and $\sigma=10$ (top) and $\sigma=1000$ (bottom).}
\label{figure:confronto1a}
\end{figure}

%\begin{figure}[ht]
%\centering
%\subfloat
%\includegraphics[width=.49\textwidth]{confronto2specie1blurosso.eps}
%\includegraphics[width=.49\textwidth]{confronto2specie2blurosso.eps}
%\caption{in this example, $N=160,$ $M=150,$ $\sigma=1000$ and the potentials are $K_\rho(x)=-e^{-\abs{x}^3},$ $K_\eta(x)=-e^{-\abs{x}^4},$ $H_\rho(x)=H_\eta(x)=-e^{-\abs{x}^2}.$ \\ The solution to second-order system is in blue, while the solution to first-order system is in red.}
%\label{figure:confronto1b}
%\end{figure}

\begin{figure}[ht]
\centering
%\subfloat
\includegraphics[width=.49\textwidth]{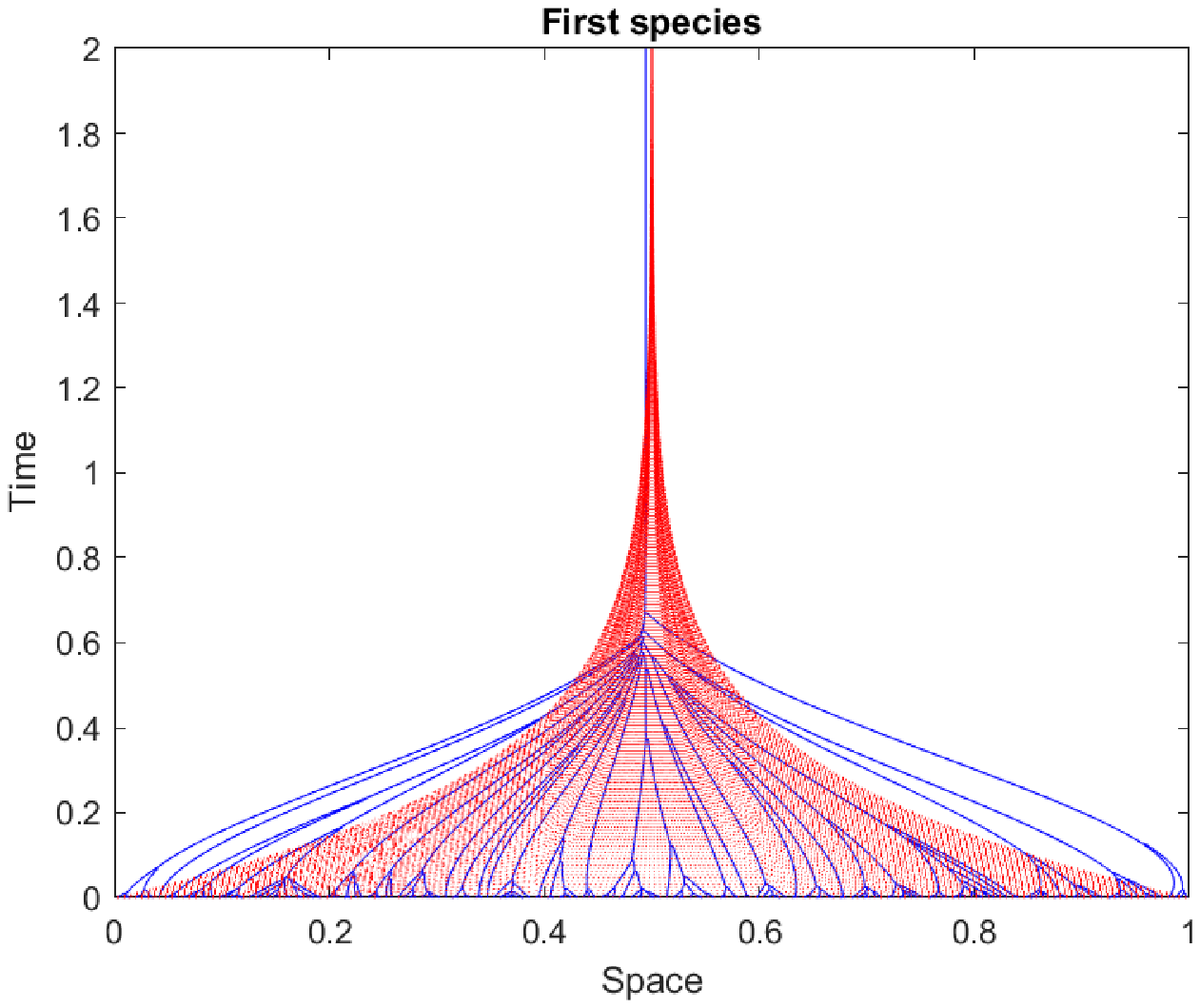}
\includegraphics[width=.49\textwidth]{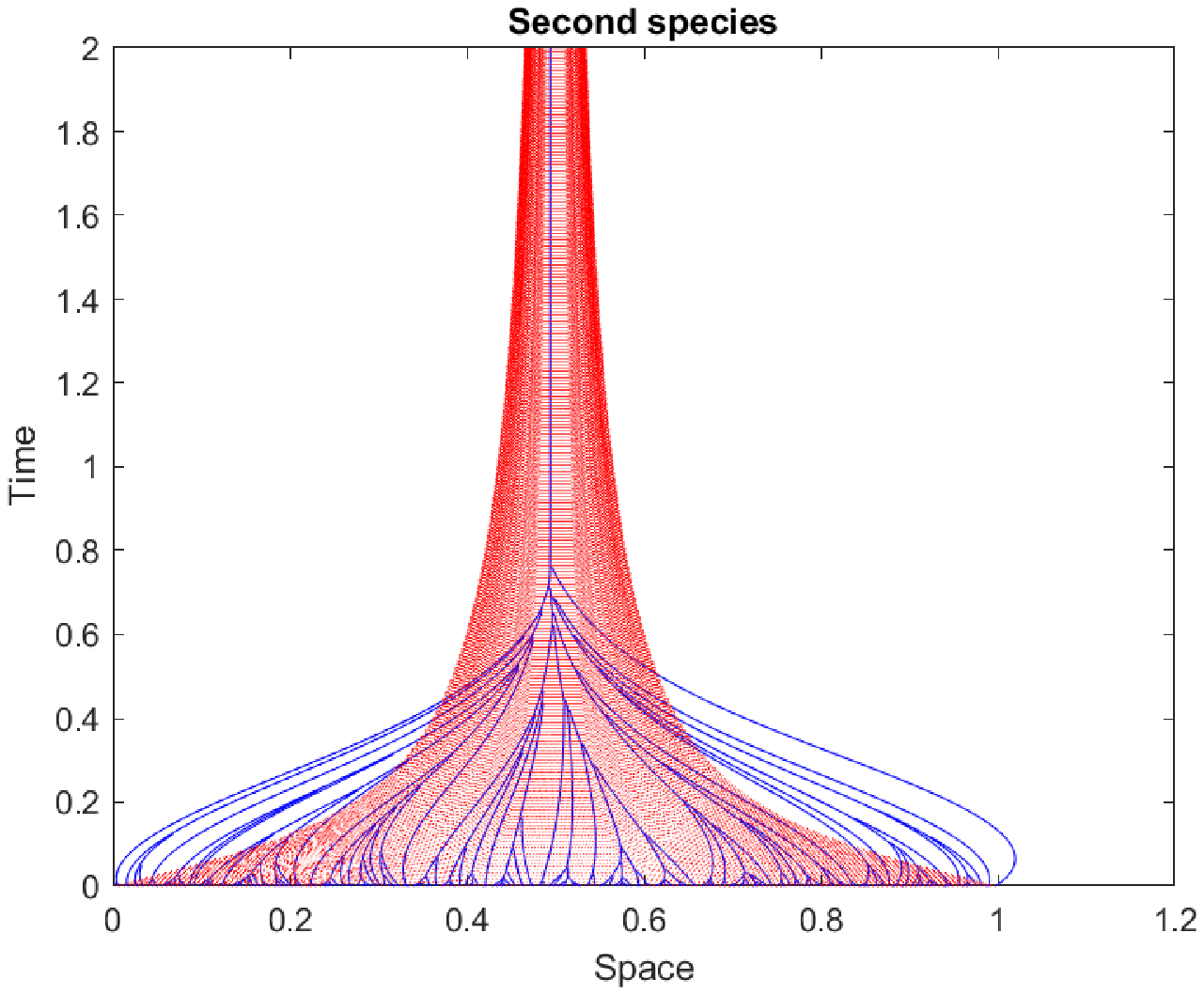}\\
\includegraphics[width=.49\textwidth]{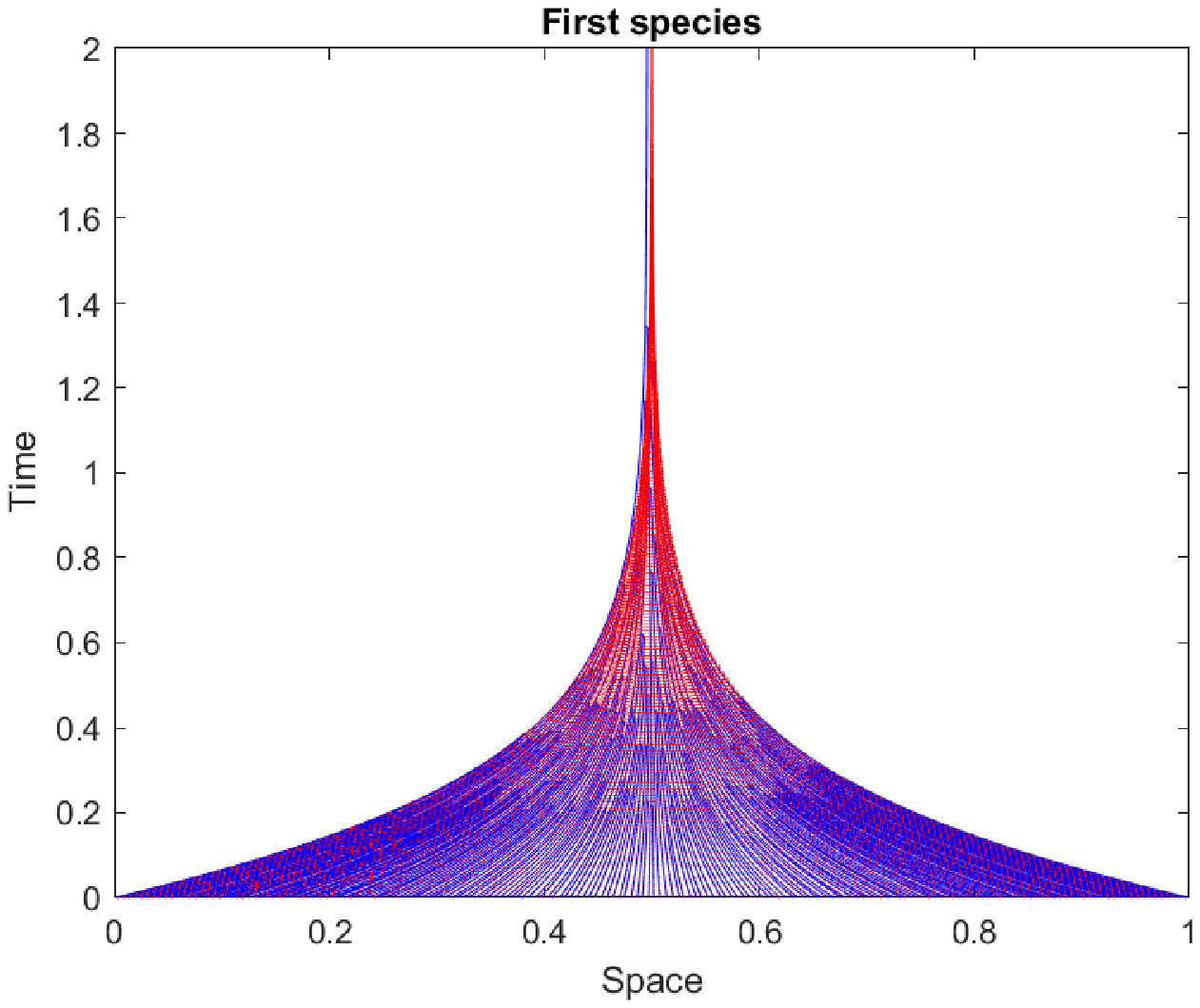}
\includegraphics[width=.49\textwidth]{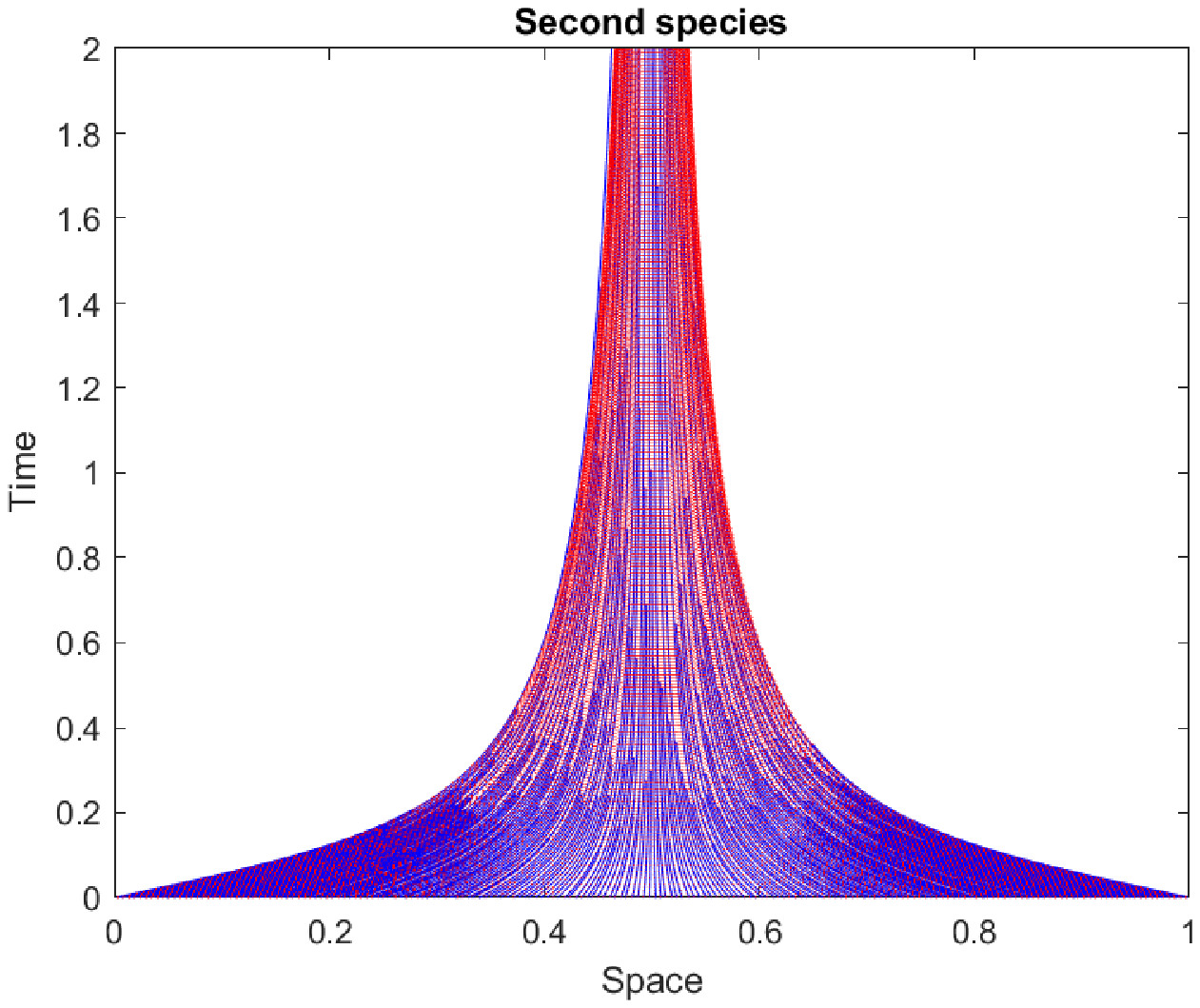}
\caption{Solutions to the  second-order system (blue in the online version) and solutions to first-order system   \eqref{eq:limitsystem} (in red) under the action of the following potentials are $K_\rho(x)=-e^{-\abs{x}^2},$ $K_\eta(x)=-e^{-3\abs{x}^3},$ $H_\rho(x)=\abs{x}^2$, $H_\eta(x)=-e^{-2\abs{x}^4}.$. In this simulation we set $N=180$, $M=190$ and $\sigma=5$ (top) and $\sigma=900$ (bottom). }
\label{figure:confronto2b}
\end{figure}

\begin{figure}[ht]
\centering
%\subfloat
\includegraphics[width=.49\textwidth]{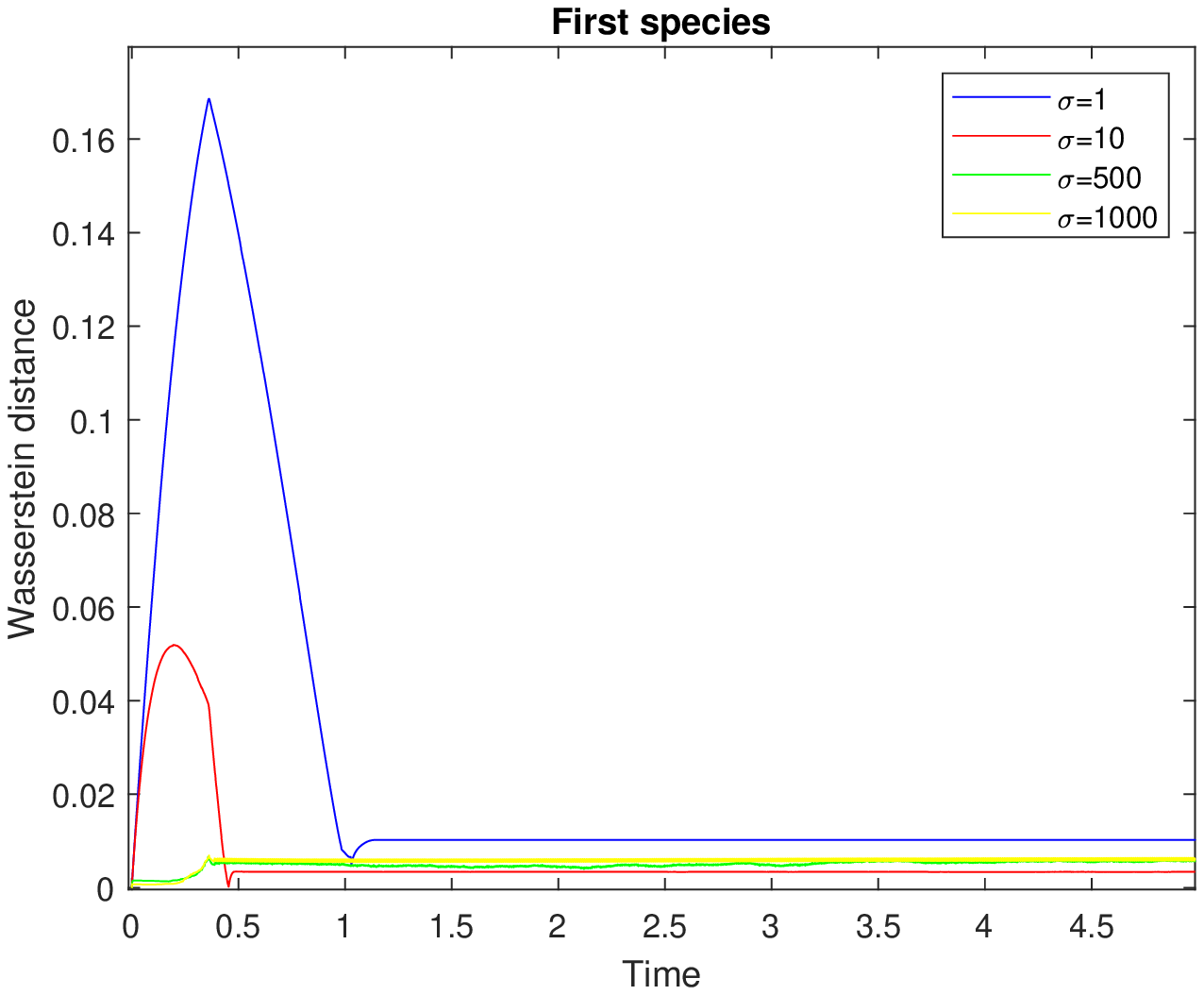}
\includegraphics[width=.49\textwidth]{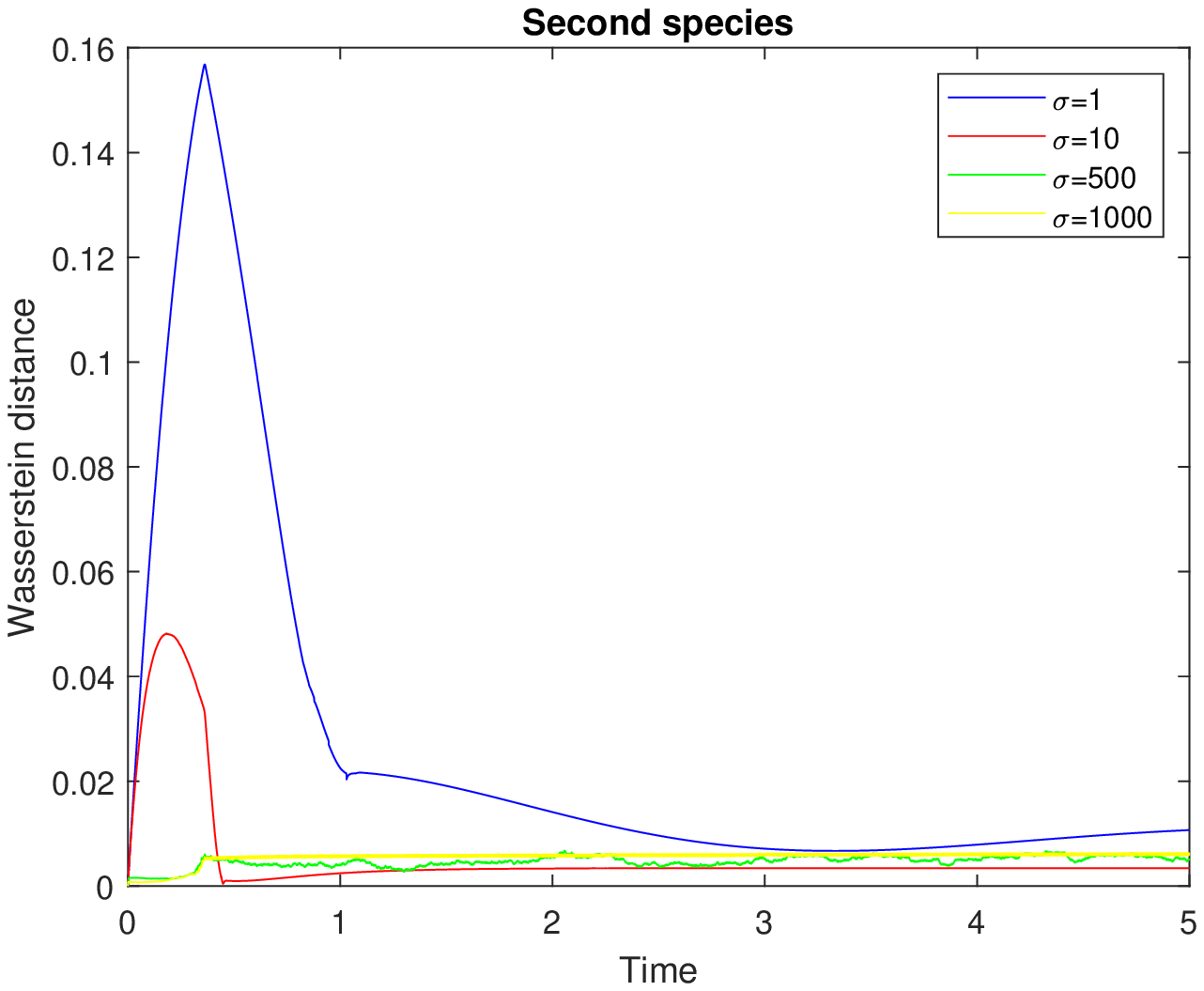}
\caption{Behaviour of the Wasserstein distance between solutions of the first order system and solutions of the second order system. The self-potentials are Newtonian attractive potentials, while the cross-potentials are given by $H_\rho(x)=H_\eta(x)=-e^{-\abs{x}^2}.$ Increasing the damping parameter Wasserstein distance remain controlled.}
\label{figure:wass}
\end{figure}

%Con max .48 i grafici stanno in 2 sulla stessa linea

System \eqref{syst:numerics} will be coupled with an uniform distributed set of particles in the space interval $[0,1]$ and a randomly distrubution for the velocities. We then let the particles evolve by using an explicit second-order three steps Runge-Kutta method, (cf. \cite{tadmor}) up to the first \emph{collision}. In order to detect \emph{collisions} between particles  we  fix a tolerance parameter $toll$ and we
assume that it occurs when the distance between two consecutive particles of the same species, for instance $x_i$ and $x_{i+1}$, is smaller than $toll$. Once two consecutive particles \emph{collide} they are replaced by a single particle with new position and velocity given by
\begin{align*}
        x_{i+\frac{1}{2}}(t)&=\frac{x_i(t)+x_{i+1}(t)}{2}, \\ v_{i+\frac{1}{2}}(t)&=\frac{v_i(t)+v_{i+1}(t)}{2},
    \end{align*}
and doubled mass, and we let the system evolving again with this new set of particels.  In all the simulations below we fix $toll = 0.002$.

We study numerical solutions to the system \eqref{syst:numerics} both in case of smooth potentials and in case of Newtonian self-potentials. Several examples are presented in the smooth case, where we highlight the possibility of a sticky dynamics, both in attractive and repulsive regime. Furthermore, we will compare  solutions to second-order system with  solution to first-order one as the increasing values of the damping parameter $\sigma$, also comparing the Wasserstein distance between the solution to the second-order system and the solution to the first-order system as $\sigma$ varies. Wasserstein distance is computed using its one-dimensional equivalence with the $L^2-$norm at the level of monotone rearrangements.

The first examples we provide concern the evolution of particles subject to the action of radial smooth potentials. Figure \ref{figure1} displays the sticky particle dynamics when all the potentials are smooth and attractive. Instead, in Figure \ref{figure2} the self-potentials are attractive and the cross-potentials are repulsive, while in Figure \ref{figure3} the self-potentials are repulsive and the cross-potentials are attractive. In particular, we highlight how the behaviour is strongly different by comparing two simulations performed with the same potentials, number of particles and initial position, but different set of initial (random) velocities.

%\subsection{Newtonian potentials}
We then show a couple of simulations in which the self-potentials are attractive Newtonian, while the cross-potentials are symmetric, radial and smooth. In particular, in Figure \ref{figure:newattr}, the cross-potentials are attractive, indeed the particles collide, while in Figure \ref{figure:newrep}, they are repulsive and not all the particles collide. According to results in Section \ref{sec:newtonian} also the effect of external potentials is taken into account.

%\subsection{Second-order system vs first-order system}
We then focus on the numerical investigation of the large damping regime. Figures \ref{figure:confronto1a} and \ref{figure:confronto2b} show a comparison between the particle evolution associated to the second order system and the ones associated to the first order system \eqref{eq:limitsystem}, for various choices of potentials. We highlight numerically the relevance of the dumping parameter $\sigma$ in the evolution: increasing the value of $\sigma$ solutions of the two different problems becomes indistinguishable. 

Finally in Figure \ref{figure:wass}, considering the same potentials in Figure \ref{figure:newattr}, we display the Wasserstein distance between the solution to the second-order system and the ones to the first-order system for different values of $\sigma$. For small values of $\sigma$, the Wasserstein distance grows initially, and then decays in time. When $\sigma$ is bigger, the distance remains controlled for all times.

\section*{Acknowledgments}

The research of MDF and SF is  supported by the Ministry of University and Research (MIUR), Italy under the grant PRIN 2020- Project N. 20204NT8W4, Nonlinear Evolutions PDEs, fluid dynamics and transport equations: theoretical foundations and applications. The research of SF and VI is supported by the Italian INdAM project N. E55F22000270001 ``Fenomeni di trasporto in leggi di conservazione e loro applicazioni''. SF is also supported by University of L'Aquila 2021 project 04ATE2021 - ``Mathematical Models For Social Innovations: Vehicular And Pedestrian Traffic, Opinion Formation And Seismology.''

\bibliographystyle{plain}

\end{document}